\newtheorem{theorem}{Theorem}[section]
\newtheorem{assumption}[theorem]{Assumption}
\newtheorem{corollary}[theorem]{Corollary}
\newtheorem{definition}[theorem]{Definition}
\newtheorem{lemma}[theorem]{Lemma}
\newtheorem{remark}[theorem]{Remark}
\numberwithin{equation}{section}
\DeclareMathOperator*{\maxi}{maximize\,}
\DeclareMathOperator*{\mini}{minimize\,}
\renewcommand{\d}{\mathrm{d}}
\newcommand{\F}{\mathcal{F}}
\newcommand{\B}{\mathcal{B}}
\newcommand{\E}{\mathbf{E}}
\renewcommand{\P}{\mathbf{P}}
\newcommand{\1}{\mathbf{1}}
\newcommand{\var}{\mathrm{var}}
    \title{Portfolio selection with costly information acquisition}
    \author{Zongxia Liang\thanks{Department of Mathematical Sciences, Tsinghua University, Beijing 100084, China; Email: \texttt{liangzongxia@tsinghua.edu.cn}}\ \ \ \ \  Shu Wang\thanks{Department of Mathematical Sciences, Tsinghua University, Beijing 100084, China; Email: \texttt{shu-wang24@mails.tsinghua.edu.cn}}\ \ \ \ \  Jianming Xia\thanks{State Key Laboratory of Mathematical Sciences, Academy of Mathematics and Systems Science, Chinese Academy of Sciences, Beijing 100190, China; Email: \texttt{xia@amss.ac.cn}}
}	
\begin{document}
    \maketitle
    \begin{abstract}
   We investigate joint optimization on information acquisition and portfolio selection within a Bayesian adaptive framework.
    The investor dynamically controls the precision of a private signal and incurs costs while updating her belief about the unobservable asset drift. 
    Controllable information acquisition fails the classical separation principle of stochastic filtering. We adopt functional modeling of control to address the consequential endogeneity issues, then solve our optimization problem through dynamic programming.
    When the unknown drift follows a Gaussian prior, the HJB equation is often explicitly solvable via the method of characteristics, yielding sufficiently smooth classical solution to establish a verification theorem and confirm the optimality of feedback controls. 
    In such settings, we find that the investor's information acquisition strategy is deterministic and could be decoupled from her trading strategy, indicating a weaker separation property. In some degenerate cases where classical solutions may fail, semi-explicit optimal controls remain attainable by regularizing the information cost.
    \end{abstract}

\section{Introduction}

    Continuous-time portfolio selection problems under partial information have been extensively studied over the past few decades. A key concept that ensures the tractability of such problems is the separation principle. Although this principle has been interpreted in various ways since its introduction by \citet{wonham1968}, we adopt the statement of \citet{georgiou2013} that problems of optimal control and state estimation can be decoupled.

    Building on this idea, \citet{detemple1986} and \citet{gennotte1986} pioneered the study of expected utility maximization with partial observation. In the standard setting involving drift uncertainty, many studies have employed finite-dimensional filtering techniques. By enlarging the dimension of state space, the partially observed problem can often be reformulated as an equivalent completely observed one. Notable examples of this approach include the Bayesian adaptive model (e.g., \citet{karatzas2001}, \citet{Bis2019}), the Hidden Markov model (e.g., \citet{sass2004}, \citet{Rieder2005}), and the Ornstein–Uhlenbeck model (e.g., \citet{rishel1999}, \citet{brendle2006}). In these frameworks, the portfolio process does not influence the filtering procedure, and thus, the separation principle holds naturally.

    A common feature of the aforementioned works is the assumption that the investor only observes public information, typically the stock price process, which significantly departs from real-world conditions. Later studies have introduced private information, most notably insider information (e.g., \citet{monoyios2009}, \citet{danilova2010}) and expert opinions (e.g., \citet{frey2012}, \citet{davis2013}). In these models, private information is exogenously provided to the investor, implying that its acquisition is both costless and uncontrollable. This feature again ensures the validity of the separation principle.

    However, a growing body of empirical evidence (see, e.g., \citet{gargano2018}, \citet{guiso2020}, \citet{gupta2020}) suggests that an investor's information processing capacity can substantially affect trading performance. This has motivated a new line of research into controllable information acquisition, particularly through the lens of rational inattention theory, see \citet{mackowiak2023} for a comprehensive review. 
    %As discussed in \citet{fulton2022}, rational inattention gives agents fine control over what signals to view and how much attention to allocate, by controlling the variance of the contaminating noise. 
    Building on this idea, \citet{Andrei2020} introduced a model in which the investor receives a private signal about the unobservable predictive coefficient $\{\beta_t\}_{0 \le t \le T}$ of the risky asset, described by the following process:
    \begin{equation}\label{AHmodel}
        \d s_t=\beta_t\d t+\frac{1}{\sqrt{a_t}}\d B_{s,t}, \quad 0\le t\leq T.
    \end{equation} 
    Here, the information acquisition strategy $\{a_t\}_{0 \le t \le T}$ determines the precision of the signal, which the investor can control by choosing the intensity with which she acquires independent unit-precision signals (i.e., by adjusting her attention to news). It is worth noting, however, that model \eqref{AHmodel} excludes the critical case of complete information ignorance ($a_t = 0$), which is not only economically unsatisfactory but also creates unnecessary mathematical complications.

    %Utilizing this formulation, they showed that the investors' time varying attention actually results from a rational information-gathering behavior.
    
    %For different modeling of controllable information acquisition such as aperiodic expert opinion and information processing constraints, we refer the readers to the most recent studies \citet{knochenhauer2024}, \citet{yin2025}, and \citet{cohen2025} for a rather general point of view. 

    In this paper, we investigate the joint optimization of information acquisition and portfolio selection under the Bayesian adaptive model, where the unobservable drift is modeled as a static random variable with a known prior distribution. The investor receives a private news signal described by \eqref{signal_process}, a normalized version of \eqref{AHmodel}, which accommodates the possibility of zero information acquisition. The cost of acquiring such information is linked to its precision via an information cost function, and this cost is directly deducted from the investor’s overall wealth. Assuming an investor who maximizes her expected utility of terminal wealth, our objective is to quantify her optimal level of costly information acquisition and to analyze the interplay between optimal trading and active learning.

    The fundamental idea of finite-dimensional filtering remains applicable in our setting. We show that, under some appropriately constructed probability space, the filtered dynamics of the investor’s wealth process, augmented by two auxiliary state variables, define a completely observed optimal stochastic control system. However, violation of the separation principle is inevitable. Consequently, in this controlled system, the driving Brownian motion, the underlying filtration, and even the probability space itself become control-dependent. 
    
    %In this setting, the measurability of feedback controls becomes nontrivial, as the filtering procedure presumes prior knowledge of the measurability of the investor’s information acquisition process.

    Besides, there is also a circular dependence between controls and observations, as the observation filtration, to which admissible controlled processes must adapt, is itself determined by the investor’s information acquisition strategy. To address these endogeneity issues, we propose the functional modeling: treating control as realization of progressively measurable functional by the observed processes. As such, we identify functional-form filtered dynamics on control-dependent probability spaces with those on a common auxiliary space, preserving the joint distribution of controls and driving Brownian motions (and thus the value function). This transformation reduces the original problem to a standard optimal stochastic control problem on the auxiliary space, which can be solved via dynamic programming techniques.
    
    %Some early studies (see, e.g., \citet{fleming1982}, \citet{el1988}) address these issues using the weak formulation of control, but this approach applies only when controls affect observation indirectly. 
    
    The Hamilton–Jacobi–Bellman (HJB) equation associated with our problem is fully nonlinear and degenerate due to the introduction of controllable information acquisition, rendering general existence and uniqueness results largely intractable. A recent breakthrough by \citet{cohen2025} demonstrates that, for a broad class of control problems in which the observed process are of the form
    $$\d Y^u_s=\lambda^\top b(s,Y^u_s,u_s)\d s+\sigma(s,Y^u_s,u_s)\d W_s,\qquad Y^u_0=0,$$
    the value function can often be characterized as the unique viscosity solution to the corresponding HJB equation, with $\varepsilon$-optimal controls computable via numerical methods. However, their framework relies on relatively restrictive assumptions, including the boundedness of the support of the unobserved random variable $\lambda$, thereby excluding many important cases such as models with Gaussian priors. For this reason, we set aside the viscosity solution approach and instead pursue classical solutions in several special but inspiring settings.

    Focusing on CARA investors and normal priors (chosen for both mathematical tractability and statistical relevance), we show that for two broad classes of information cost functions, the original fully nonlinear second-order HJB equation can be reduced to either a quasi-linear free-boundary problem or a first-order Hamilton–Jacobi (HJ) equation. Both forms are amenable to analytical solution via the method of characteristics. The resulting value functions are sufficiently smooth to allow for a verification theorem, confirming the optimality of the feedback controls. In some degenerate cases where classical solutions may not exist, semi-explicit optimal controls remain attainable by regularizing the information cost. Furthermore, this whole approach extends naturally to CRRA investors and models with correlated noise.

    We note that existing works on controllable information acquisition, particularly the seminal paper of \citet{Andrei2020}, often overlook the aforementioned endogeneity issues. These studies typically focus on numerically solving the HJB equation while omitting discussions on the existence and uniqueness of its solution. Consequently, verification theorems for optimal feedback controls are rarely established. Our paper thus contributes to the literature by 
    \begin{itemize}
    \item[(1)] Developing a rigorous mathematical framework that reduces portfolio selection problems with endogenous information acquisition to standard optimal stochastic control;
    \item[(2)] Proposing an innovative solution technique based on the method of characteristics, which enables explicit solution of HJB equation and verification theorem of optimal feedback controls; and
    \item[(3)] Deriving novel economic interpretation that reveals the structural interaction between learning and trading:
    \end{itemize}

    We find that with normal priors, the investor’s optimal information acquisition strategy is deterministic, mirroring the behavior of the posterior variance process in the classical Kalman–Bucy filter. In fact, we prove that in order to achieve the optimal information-and-trading performance, the investor first selects her information acquisition strategy based solely on the posterior variance of the drift estimate, and subsequently adopts a portfolio proportional to the posterior mean (actually, proportional to the certainty equivalence value of the classical Merton fraction). This finding suggests another separation principle that the problems of optimal trading and active learning can be effectively decoupled, even though the optimization is conducted simultaneously.

    The remainder of this paper is organized as follows. Section \ref{sec2} introduces the functional modeling of information-and-trading strategies and rigorously formulate our optimization problem. In Section \ref{stoch_filt}, stochastic filtering is implemented, transforming the problem into a completely observed one within an auxiliary probability space. Sections \ref{truncated}–\ref{general} focus on explicit solutions to the HJB equation as well as verification results of optimal feedback control for two distinct types of information costs with markedly different properties. Section \ref{sec6} conducts numerical experiments and elaborates on the properties of the optimal information acquisition process. Finally, Section \ref{sec7} discusses a few extensions of the results from Section \ref{general}.

    \section{Problem formulation}\label{sec2}
    Let $(\Omega, \mathcal{F}, \{\mathcal{F}_t\}_{t \in [0,T]}, \P)$ be a filtered complete probability space, where $T > 0$ denotes the finite time horizon, and the filtration $\{\mathcal{F}_t\}_{t \in [0,T]}$, representing the full market information, satisfies the usual conditions. The financial market consists of two assets: a risk-free bond and a risky stock. The risk-free asset earns zero interest, while the stock price process $S = \{S_t\}_{t \in [0,T]}$ follows a geometric Brownian motion governed by the SDE:
        \begin{equation*}
        \frac{\d S_t}{S_t} = \mu \d t + \sigma \d W_t,\quad t\in[0,T],
        \end{equation*}
	where $W = \{W_t\}_{t \in [0,T]}$ is a standard Brownian motion with respect to (w.r.t.) the filtration $\{\mathcal{F}_t\}_{t \in [0,T]}$, the volatility $\sigma > 0$ is a known constant, and the drift $\mu$ is an unobservable random variable independent of $W$.
    We assume that $\mu$ follows a sub-Gaussian distribution, i.e., there exists $\eta>0$ such that 
        $$\E[e^{\eta\mu^2}]<+\infty.$$
    Define the revealing process $\widetilde W=\{\widetilde{W}_t\}_{t\in[0, T]}$ by $$\widetilde{W}_t=\mu t+\sigma W_t,\qquad t\in[0,T].$$
    Clearly, $S$ and $\widetilde{W}$ generate the same natural filtration. Therefore, $\widetilde{W}$ reveals to the investor all the information about the true value of $\mu$ embedded in the asset price dynamics.

    The investor, however, does not have access to the whole information $\{\F_t\}_{t\in [0,T]}$, but instead observes from
    \begin{itemize}
        \item [(i)] the asset price process (public information), and
        \item [(ii)] another signal process (private information), which evolves according to the following SDE:
    \end{itemize}
    \begin{equation}\label{signal_process}
        \begin{cases}
            \d \widetilde{B}_t =\mu\theta(t,\widetilde{W},\widetilde{B}) \d t +  \d B_t,\quad t\in[0,T],\\
            \widetilde B_0=0,
        \end{cases}	
		\end{equation}
    Here, $B = \{B_t\}_{t \in [0,T]}$ is another standard
        $\{\mathcal{F}_t\}_{t \in [0,T]}$-Brownian motion independent of both $W$ and $\mu$, and $\theta$ is the functional modeling of the investor's \emph{information acquisition strategy}. 

        To formally define admissible information acquisition strategies, we consider the space $C([0,T])$ consisting of all continuous real-valued functions on $[0,T]$, equipped with the supremum norm $$||\omega||_{C([0,T])}=\underset{t\in[0,T]}{\sup}|\omega(t)|.$$
        Let $\{\mathcal{B}_t\}_{t\in[0,T]}$ be the natural filtration generated by the coordinate mapping process:
        %, which is defined with the notion of cylinder sets: 
        $$\B_t=\sigma\left(\{\omega\in C([0,T]): (\omega(t_1),\cdots,\omega(t_n))\in A, n\ge 1, t_1,\cdots,t_n\in [0,t], A\in \B(\mathbb{R}^n)\}\right).$$ 
        We also define the right-limit filtration by $\B_{t+}=\underset{s\in (t,T]}{\cap}\B_s,\;t\in [0,T)$, and $\B_{T+}=\B_T$. 
         
         %It is known that for a $k$-dimensional process $A=\{A_t:0\le t<T\}$ to be progressively measurable with respect to the natural filtration of another $m$-dimensional continuous process $\zeta=\{\zeta_t:0\le t<T\}$, a necessary and sufficient condition would be the existence of a $\{(\B_{t+})^m\}_{t\in[0,T)}$ progressively measurable functional $\alpha:[0,T)\times C[0,T)^m\rightarrow \mathbb{R}^k$ such that $A_t=\alpha(t,\zeta),\;\forall t\in [0,T).$

        \begin{definition}\label{theta}
        An admissible information acquisition strategy is a functional $$\theta:[0,T]\times C([0,T])\times C([0,T])\rightarrow [0,+\infty)$$ that satisfies the following conditions:
        \begin{itemize}
            \item[(a)] $\theta$ is progressively measurable w.r.t. the filtration $\{\B_{t+}\otimes\B_{t+}\}_{t\in[0,T]}$.
            \item[(b)] SDE \eqref{signal_process} admits a unique solution $\widetilde{B}=\{\widetilde{B}_t\}_{t\in [0,T]}$, which is progressively measurable w.r.t. $\F^{\mu,W,B}=
            \big\{\F^{\mu,W,B}_t\triangleq\sigma(\mu)\vee \F^{W,B}_t \big\}_{t\in[0,T]}$.
            \item[(c)] There exists a constant $C>0$ such that $\theta(t,\omega_1,\omega_2)\le C$ for all $\omega_1,\omega_2\in C([0,T])$.
        \end{itemize}
         \end{definition}
    Throughout this paper, we restrict attention to admissible information acquisition strategies.    
    Denote by $\{\F^{\widetilde{W},\widetilde{B}}_t\}_{t\in [0,T]}$ the natural filtration generated by $\widetilde{W}$ and $\widetilde{B}$, which we refer to as the observation filtration. This filtration captures all information available to the investor from both the public asset price and the private signal. Any stochastic process that is progressively measurable w.r.t. $\F^{\widetilde{W},\widetilde{B}}$ is called observable. 
    
    For notational convenience, we define the realization of $\theta$ by the observed processes $(\widetilde{W},\widetilde{B})$ as $$\vartheta\triangleq\{\vartheta_t=\theta(t,\widetilde{W},\widetilde{B})\}_{t\in [0,T]}.$$
    When $\vartheta_t>0$, the investor can divide the observed signal $\d\widetilde B_t$ by $\vartheta_t$ to obtain the following equivalent signal, similar to the formulation of \citet{Andrei2020}:
        $$\d \Xi_t=\mu\d t+\frac{1}{\vartheta_t}\d B_t.$$
    Heuristically, $\frac{\d {\Xi}_t}{\d t}$ provides an unbiased estimate of $\mu$, with instantaneous variance $\frac{1}{\vartheta_t^2\d t}$. Therefore, its precision is $\vartheta_t^2\d t$, the reciprocal of its variance. We assume that the cost of acquiring such information, deducted directly from the investor’s overall wealth, is determined by $k(\vartheta_t^2)\d t$, where $k:[0,+\infty)\rightarrow[0,+\infty]$ is the information cost function.
    
    However, when $\vartheta_t=0$, the expression $\frac{1}{\vartheta_t}\d B_t$ makes no sense. This motivates the adoption of the normalized convention \eqref{signal_process}, where the the magnitude of the unobserved noise term is fixed as a constant and the information acquisition strategy acts as an amplifier of the unknown drift. Note that this normalized formulation actually aligns with the classical framework of partially observed stochastic control problems, see, e.g.,  \citet[Section 2.7.6]{YongZhou1999}.
        
    We assume that $k$ is increasing, lower semicontinuous, and convex, with $k(0)=0$. The assumption $k(0)=0$ reflects the economic intuition that no cost is incurred when the investor chooses not to acquire private information, in which case she observes only the Gaussian white noise.

        Once an admissible information acquisition strategy is chosen, the investor selects an admissible trading strategy $\pi$, which is precisely defined as follows.
        
        \begin{definition}\label{pi}
        An admissible trading strategy is a functional $$\pi:[0,T]\times C([0,T])\times C([0,T])\rightarrow \mathbb{R}$$ that satisfies the following conditions:
        \begin{itemize}
            \item[(a)] $\pi$ is progressively measurable w.r.t. the filtration $\{\B_{t+}\otimes\B_{t+}\}_{t\in[0,T]}$.
            \item[(b)] There exist a constant $C>0$ and two bounded $\{\B_{t+}\otimes\B_{t+}\}_{t\in[0,T]}$-progressively measurable functionals $f$ and $g$ such that $$|\pi(t,\widetilde{W},\widetilde{B})|\le C+\sup_{s\in[0,t]}\left|\int_0^sf(\tau,\widetilde{W},\widetilde{B})\d\widetilde{W}_\tau\right|+\sup_{s\in[0,t]}\left|\int_0^s g(\tau,\widetilde{W},\widetilde{B}) \d\widetilde{B}_\tau\right|, \quad\forall\, t\in [0,T].$$ 
        \end{itemize} 
         \end{definition}
    %Any progressively measurable functional $\pi(t,\omega_1,\omega_2)$ that has linear growth property with respect to $\omega_1,\omega_2$ is clearly admissible. 
     Let $\varpi=\{\varpi_t\triangleq\pi(t,\widetilde{W},\widetilde{B})\}_{t\in [0,T]}$ denote the realization of the admissible trading strategy $\pi$ by $(\widetilde{W},\widetilde{B})$. Condition (b) then implies $\int_0^T\varpi_t^2\d t<+\infty$ a.s..

    The pair $(\theta,\pi)$ is referred to as an \emph{information-and-trading strategy}. It is called admissible if both $\theta$ and $\pi$ are admissible, and we denote this by $(\theta,\pi)\in\mathcal{A}$. By \citet[Lemma 1.2.10]{YongZhou1999}, for each $(\theta,\pi)\in\mathcal{A}$, its realization $(\vartheta,\varpi)$ by $(\widetilde{W},\widetilde{B})$ is observable. 
    
    Each admissible information-and-trading strategy generates a self-financing wealth process $X=\{X_t\}_{t\in[0,T]}$ satisfying 
    \begin{equation}\label{wealth_dynamics}
			\d X_t = \varpi_t \mu \d t + \varpi_t \sigma \d W_t - k\left(\vartheta_t^2\right) \d t,\qquad t\in [0,T]. 
		\end{equation}
    Let $U:\mathbb{R}\rightarrow\mathbb{R}$ be a von Neumann-Morgenstern utility function.The investor’s objective is to maximize the expected utility of terminal wealth over all admissible information-and-trading strategies:
		\begin{equation}\label{optimization_problem}
			\begin{split}
				&\maxi_{(\theta,\pi)\in\mathcal{A}}\quad \E\left[U(X_T)|X_0=x\right]\\
				&\text{ subject to } \quad \eqref{signal_process}  \text{ and } \eqref{wealth_dynamics}.
			\end{split}	
		\end{equation}
       
        \section{Girsanov transformation and filtering}\label{stoch_filt}

        In the wealth dynamics \eqref{wealth_dynamics}, the drift term $\mu$ is unobservable. To eliminate this hidden component, we shall employ Girsanov transformation. Given an admissible information acquisition strategy $\theta$, the exponential process $\Psi=\{\Psi_t\}_{t\in [0,T]}$, given by 
		\begin{equation*}
			\Psi_t=\exp\left\{-\frac{\mu}{\sigma}W_t-\frac{\mu^2}{2\sigma^2}t-\int_0^t\mu\vartheta_s\d B_s-\int_0^t\frac{1}{2}\mu^2\vartheta_s^2\d s\right\},\quad t\in[0,T],
		\end{equation*}
		is in fact a martingale w.r.t. $\F^{\mu,W,B}$. 
        %(To prove this, might need to invoke \citet[Corollary 3.5.14]{Karatzas1991}.)
        This allows us to define a new probability measure $\widetilde{\P}$, whose dependence on $\theta$ and $\Psi$ is omitted for notational simplicity, via the Radon–Nikodym derivative: \begin{align}\label{measure_trans}
		    \left.\frac{\d \widetilde{\P}}{\d\P}\right|_{\F^{\mu,W,B}_T}=\Psi_T.
		\end{align} 
        Because  $W$ and $B$ are independent $(\P,\F^{\mu,W,B})$-Brownian motions, the Girsanov theorem implies that the processes
        $$\widetilde{W}_t=W_t+\frac{\mu}{\sigma}t,\qquad \widetilde{B}_t=B_t+\int_0^t \mu\vartheta_s \d s,\quad t\in [0,T]$$ are independent $(\widetilde{\P},\F^{\mu,W,B})$-Brownian motions. 
       Moreover, as $\mu$ is $\F^{\mu,W,B}_0$-measurable, it follows from the independent increment property of Brownian motion that $\mu$ is independent of both $\widetilde{W}$ and $\widetilde{B}$ under $\widetilde{\P}$.

       \begin{lemma}\label{distribution_mu} The random variable $\mu$ has the same distribution under $\P$ and $\widetilde{\P}$.
		\end{lemma}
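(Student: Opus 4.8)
The plan is to identify the law of $\mu$ under $\widetilde{\P}$ by testing against bounded functions and to reduce the entire computation to the martingale property of $\Psi$ that was just recorded. Throughout, write $\E$ for the expectation under $\P$ and $\E^{\widetilde{\P}}$ for the expectation under $\widetilde{\P}$. First I would fix an arbitrary bounded Borel function $\phi:\mathbb{R}\to\mathbb{R}$. Since $\phi(\mu)$ is $\F^{\mu,W,B}_T$-measurable, the defining relation \eqref{measure_trans} for the Radon--Nikodym density gives
$$\E^{\widetilde{\P}}[\phi(\mu)]=\E[\phi(\mu)\Psi_T].$$
Because the collection of such $\phi$ determines a distribution, it suffices to prove that the right-hand side equals $\E[\phi(\mu)]$ for every bounded Borel $\phi$.

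The key step is to condition on the initial $\sigma$-algebra. As $\mu$ is $\F^{\mu,W,B}_0$-measurable, so is $\phi(\mu)$, and the tower property yields
$$\E[\phi(\mu)\Psi_T]=\E\big[\phi(\mu)\,\E[\Psi_T\mid\F^{\mu,W,B}_0]\big].$$
Invoking the martingale property of $\Psi$ with respect to $\F^{\mu,W,B}$ between times $0$ and $T$ gives $\E[\Psi_T\mid\F^{\mu,W,B}_0]=\Psi_0=1$ almost surely, where $\Psi_0=1$ because every stochastic and Lebesgue integral in the exponent vanishes at $t=0$. Substituting back produces $\E[\phi(\mu)\Psi_T]=\E[\phi(\mu)]$, hence $\E^{\widetilde{\P}}[\phi(\mu)]=\E[\phi(\mu)]$. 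Since $\phi$ is arbitrary, $\mu$ has the same law under $\P$ and $\widetilde{\P}$.

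The one point needing a little care, rather than a genuine obstacle, is that $\F^{\mu,W,B}_0=\sigma(\mu)\vee\F^{W,B}_0$ strictly contains $\sigma(\mu)$; but $\F^{W,B}_0$ is the completion of the trivial $\sigma$-algebra and carries no information beyond the $\P$-null sets, so conditioning on $\F^{\mu,W,B}_0$ coincides $\P$-a.s. with conditioning on $\sigma(\mu)$, legitimizing the tower-property step. I would also note that the whole argument rests on $\Psi$ being a \emph{true} (not merely local) $\F^{\mu,W,B}$-martingale: were one to establish this from scratch, that is where the real work would lie, since $\vartheta$ depends on $\mu$ through $(\widetilde{W},\widetilde{B})$, and one would run a Nov\'ikov/Bene\v{s}-type argument exploiting the boundedness of $\theta$ in Definition \ref{theta}(c) together with the sub-Gaussian assumption $\E[e^{\eta\mu^2}]<+\infty$. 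Here, however, that martingale property is already in hand, so the lemma follows immediately.
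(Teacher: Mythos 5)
Your proof is correct and takes essentially the same approach as the paper: both arguments rest on the true-martingale property of $\Psi$ w.r.t.\ $\F^{\mu,W,B}$ together with $\Psi_0=1$, the paper merely packaging the computation as the Bayes formula for $\E^{\widetilde{\P}}[\1_A(\mu)\mid\F_t^{\widetilde{W},\widetilde{B}}]$ (conditioning first on $\F^{\mu,W,B}_t$ to replace $\tfrac{\d\widetilde{\P}}{\d\P}$ by $\Psi_t$) and then setting $t=0$, where the trivial $\F_0^{\widetilde{W},\widetilde{B}}$ collapses it to exactly your identity $\E^{\widetilde{\P}}[\phi(\mu)]=\E\bigl[\phi(\mu)\,\E[\Psi_T\mid\F^{\mu,W,B}_0]\bigr]=\E[\phi(\mu)]$. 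Your closing caveats are also accurate: conditioning on $\F^{\mu,W,B}_0$ is harmless, and the only genuine content is the martingale property of $\Psi$, which the paper asserts immediately before the lemma.
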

\begin{proof} Applying Bayes' formula (See \citet[Lemma 3.5.3]{Karatzas1991}),  we have for all $A\in \mathcal {B}(\mathbb{R})$,  
         \begin{align*}\E^{\widetilde{\P}}[\1_A(\mu)|\F_t^{\widetilde{W},\widetilde{B}}]&=\frac{\E^\P[\1_A(\mu)\frac{\d \widetilde{\P}}{\d\P}|\F_t^{\widetilde{W},\widetilde{B}}]}{\E^\P[\frac{\d \widetilde{\P}}{\d\P}|\F_t^{\widetilde{W},\widetilde{B}}]}\\
        &=\frac{\E^\P[\1_A(\mu)\E^\P[\frac{\d \widetilde{\P}}{\d\P}|\F^{\mu,W,B}_t]|\F_t^{\widetilde{W},\widetilde{B}}]}{\E^\P[\E^\P[\frac{\d \widetilde{\P}}{\d\P}|\F^{\mu,W,B}_t]|\F_t^{\widetilde{W},\widetilde{B}}]}=\frac{\E^\P[\1_A(\mu)\Psi_t|\F_t^{\widetilde{W},\widetilde{B}}]}{\E^\P[\Psi_t|\F_t^{\widetilde{W},\widetilde{B}}]}.\end{align*}
	Setting $t=0$ and noting that $\F_0^{\widetilde{W},\widetilde{B}}$ is trivial, the result is obtained immediately. \end{proof} 
    
		It is easy to verify that 
		$$\left.\frac{\d \P}{\d \widetilde{\P}}\right|_{\F^{\mu,W,B}_t}=	\Psi_t^{-1}=\exp\left\{\frac{\mu}{\sigma}\widetilde{W}_t-\frac{\mu^2}{2\sigma^2}t+\int_0^t\mu\vartheta_s\d \widetilde{B}_s-\int_0^t\frac{1}{2}\mu^2\vartheta_s^2\d s\right\},\quad t\in[0,T].$$ 
We characterize its observable version in the next lemma.
        \begin{lemma}
            For every $t\in [0,T]$, we have
            $$\E^{\widetilde{\P}}\left[(\Psi_t)^{-1}\left|\F^{\widetilde{W},\widetilde{B}}_t\right.\right]
            =%F\left(\frac{\widetilde{W}_t}{\sigma}+\int_0^t\vartheta_s\d \widetilde{B}_s,\frac{t}{\sigma^2}+\int_0^t\vartheta_s^2\d s\right)\triangleq 
            F(Y_t,Z_t),$$
            where 
            $$Y_t=\frac{\widetilde{W}_t}{\sigma}+\int_0^t\vartheta_s\d \widetilde{B}_s,\quad Z_t=\frac{t}{\sigma^2}+\int_0^t\vartheta_s^2\d s,$$ 
            and $F(y,z)=\mathbf{E}\left[\exp\{y\mu-\frac{z\mu^2}{2}\}\right]$ is well-defined and continuous on $\mathbb{R}\times(-\eta,+\infty)$. In particular, if $\mu\sim N(\mu_0,\sigma_0^2)$, we have the closed-form expression:
            \begin{equation}\label{F:normal}
            F(y,z)=\frac{1}{\sqrt{z\sigma_0^2+1}}\exp\left\{\frac{y^2\sigma_0^2+2y\mu_0-z\mu_0^2}{2(z\sigma_0^2+1)}\right\},\quad z\in(-\frac{1}{\sigma_0^2},+\infty).
            \end{equation}
        \end{lemma}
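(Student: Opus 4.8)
The plan is to reduce the conditional expectation to an unconditional Laplace-type transform of $\mu$ by exploiting the independence established under $\widetilde{\P}$. First I would recall the explicit expression for $\Psi_t^{-1}$ displayed just before the lemma and factor its exponent in the powers $\mu$ and $\mu^2$, obtaining
$$\Psi_t^{-1}=\exp\left\{\mu\left(\frac{\widetilde{W}_t}{\sigma}+\int_0^t\vartheta_s\,\d\widetilde{B}_s\right)-\frac{\mu^2}{2}\left(\frac{t}{\sigma^2}+\int_0^t\vartheta_s^2\,\d s\right)\right\}=\phi(\mu;Y_t,Z_t),$$
where $\phi(m;y,z)=\exp\{my-\frac{1}{2}m^2 z\}$. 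Here $Z_t$ is a pathwise Lebesgue integral of the observable process $\vartheta$, while $Y_t$ contains the It\^o integral $\int_0^t\vartheta_s\,\d\widetilde{B}_s$; since $\vartheta$ is $\F^{\widetilde{W},\widetilde{B}}$-progressively measurable and $\widetilde{B}$ is $\F^{\widetilde{W},\widetilde{B}}$-adapted, both $Y_t$ and $Z_t$ are $\F_t^{\widetilde{W},\widetilde{B}}$-measurable.

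The core step is the substitution (freezing) rule for conditional expectations. Under $\widetilde{\P}$ the variable $\mu$ is independent of $\sigma(\widetilde{W},\widetilde{B})$, hence of the sub-$\sigma$-algebra $\F_t^{\widetilde{W},\widetilde{B}}$, and $\Psi_t^{-1}=\frac{\d\P}{\d\widetilde{\P}}\big|_{\F_t^{\mu,W,B}}$ satisfies $\E^{\widetilde{\P}}[\Psi_t^{-1}]=1$, so it lies in $L^1(\widetilde{\P})$. Freezing the $\F_t^{\widetilde{W},\widetilde{B}}$-measurable pair $(Y_t,Z_t)$ and integrating out the independent $\mu$ gives
$$\E^{\widetilde{\P}}\!\left[\Psi_t^{-1}\,\middle|\,\F_t^{\widetilde{W},\widetilde{B}}\right]=\E^{\widetilde{\P}}\!\left[\phi(\mu;y,z)\right]\Big|_{(y,z)=(Y_t,Z_t)},$$
and Lemma \ref{distribution_mu} lets me replace $\widetilde{\P}$ by $\P$ in the remaining expectation, identifying the right-hand side with $F(Y_t,Z_t)$. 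Since $Z_t=\frac{t}{\sigma^2}+\int_0^t\vartheta_s^2\,\d s\ge 0$, its value always falls in the admissible range $(-\eta,+\infty)$.

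It then remains to check that $F$ is well-defined and continuous on $\mathbb{R}\times(-\eta,+\infty)$. For finiteness I would invoke the sub-Gaussian hypothesis $\E[e^{\eta\mu^2}]<+\infty$: given $z>-\eta$, Young's inequality $my\le\frac{\varepsilon}{2}m^2+\frac{y^2}{2\varepsilon}$ with $\varepsilon\in(0,\eta+z)$ yields $\phi(m;y,z)\le\exp\{\frac{y^2}{2\varepsilon}\}\exp\{\frac{\eta}{2}m^2\}$, whose expectation is finite. Restricting to $|y|\le M$ and $z\ge -\eta+\rho$ makes this bound uniform, producing an integrable dominating function, so continuity follows from dominated convergence. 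Finally, for $\mu\sim N(\mu_0,\sigma_0^2)$ I would evaluate $F$ as a Gaussian integral: collecting the $m^2$- and $m$-terms in the exponent produces a quadratic with leading coefficient $\frac{1}{2}(z+\sigma_0^{-2})$, integrable precisely when $z>-\sigma_0^{-2}$; completing the square and computing the Gaussian normalizing constant gives the prefactor $(z\sigma_0^2+1)^{-1/2}$ together with the stated exponent, which is exactly \eqref{F:normal}.

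I expect the only genuinely delicate point to be the rigorous justification of the freezing step, namely verifying simultaneously the independence of $\mu$ under $\widetilde{\P}$, the $\F_t^{\widetilde{W},\widetilde{B}}$-measurability of $(Y_t,Z_t)$, and the $L^1(\widetilde{\P})$-integrability of $\Psi_t^{-1}$, so that the substitution rule applies. By contrast, the integrability and continuity of $F$ and the closed-form Gaussian computation are routine.
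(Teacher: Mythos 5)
Your proposal is correct and follows essentially the same route as the paper's proof: factor $\Psi_t^{-1}=\exp\{\mu Y_t-\frac{1}{2}\mu^2 Z_t\}$, observe that $(Y_t,Z_t)$ is $\F^{\widetilde{W},\widetilde{B}}_t$-measurable while $\mu$ is independent of $\F^{\widetilde{W},\widetilde{B}}$ under $\widetilde{\P}$, and conclude by the substitution (freezing) rule. The additional details you supply --- the $L^1(\widetilde{\P})$ integrability via $\E^{\widetilde{\P}}[\Psi_t^{-1}]=1$, the explicit appeal to Lemma \ref{distribution_mu} to pass from $\widetilde{\P}$ back to $\P$, and the Young's-inequality/dominated-convergence argument for the well-definedness and continuity of $F$ together with the Gaussian computation --- are precisely the steps the paper declares routine, and they are all carried out correctly.
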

        
\begin{proof}  Obviously, both $Y_t$ and $Z_t$ are $\F^{\widetilde{W},\widetilde{B}}_t$-measurable, $t\in[0,T]$. Because  $\mu$ is independent of $\F^{\widetilde{W},\widetilde{B}}$ under $\widetilde{\P}$, we have
            \begin{align*}
                \E^{\widetilde{\P}}\left[(\Psi_t)^{-1}\left|\F^{\widetilde{W},\widetilde{B}}_t\right.\right]&=\E^{\widetilde{\P}}\left.\left[\exp\left\{\mu Y_t-\frac{1}{2}\mu^2Z_t\right\}\right|\F^{\widetilde{W},\widetilde{B}}_t\right]\\
                &=\E^{\widetilde{\P}}\left.\left[\exp\left\{\mu y-\frac{1}{2}\mu^2z\right\}\right]\right|_{y=Y_t,z=Z_t}=F(Y_t,Z_t),
            \end{align*}  
            as desired. The calculation of $F$ for normal prior distribution is straightforward.
\end{proof}

        Under the equivalent probability measure $\widetilde{\P}$, the wealth process $X$ satisfies the dynamics \begin{equation*}%\label{wealth2}\begin{cases}
              \d X_t = \pi(t,\widetilde{W},\widetilde{B})\sigma \d \widetilde{W}_t- k\left( \theta(t,\widetilde{W},\widetilde{B})^2\right) \d t,\quad t\in [0,T].
              %,\\ X_0=x.\end{cases}
              \end{equation*}  It is clear that $X_T$ is $\F^{\widetilde{W},\widetilde{B}}_T$-measurable. Therefore, the original objective function $\E[U(X_T)]$ can be reformulated under $\widetilde{\P}$ as
        \begin{align*}
            \E[U(X_T)]&=\E^{\widetilde{\P}}[U(X_T)\Psi_T^{-1}]=\E^{\widetilde{\P}}[\E^{\widetilde{\P}}[U(X_T)\Psi_T^{-1}|\F^{\widetilde{W},\widetilde{B}}_T]]\\
            &=\E^{\widetilde{\P}}[U(X_T)\E^{\widetilde{\P}}[\Psi_T^{-1}|\F^{\widetilde{W},\widetilde{B}}_T]]=\E^{\widetilde{\P}}[F(Y_T,Z_T)U(X_T)].
        \end{align*}
        The above analysis leads to an optimal stochastic control problem under the equivalent probability measure $\widetilde{\P}$, given by
    \begin{align}\label{6}
   \widetilde V(t,x, y,z)=\sup_{({\theta},\pi)\in\mathcal{A}}& \E^{\widetilde{\P}}\left[F({Y}_T,{Z}_T)U({X}_T)| {X}_t=x, {Y}_t=y, {Z}_t=z\right]\\ \nonumber
    \text{subject to }&
    \begin{cases}
     \d {X}_t = {\pi}(t,\widetilde{W},\widetilde{B})\sigma \d \widetilde{W}_t- k\left( {\theta}(t,\widetilde{W},\widetilde{B})^2\right) \d t,\\	
    \d {Y}_t=\frac{1}{\sigma}\d \widetilde{W}_t+{\theta}(t,\widetilde{W},\widetilde{B})\d \widetilde{B}_t,\\
    \d {Z}_t=(\frac{1}{\sigma^2}+{\theta}(t,\widetilde{W},\widetilde{B})^2)\d t.
    \end{cases}
    \end{align} 

    %\begin{remark}
     %The traditional Bayesian filter is one-dimensional in the sense that only one extra state variable, which is exactly the revealing process that we have defined, is added to the observable stochastic control system; see, e.g., \citet{Bis2019}. However, things become different when the controllable information acquisition is included in our model setup. Now two extra state variables, $Y$ and $Z$, are added into our optimal stochastic control system. 
    %\end{remark}

    The above filtering procedure directly incorporates the investor's information acquisition strategy, thereby violating the classical separation principle of partially observed stochastic control problems. This leads to the weak formulation of stochastic control problem \eqref{6}, where the probability measure $\widetilde{\P}$, the driving Brownian motion $\widetilde{B}$, and the observation filtration $\F^{\widetilde{W},\widetilde{B}}$ all depend on the controlled variable $\theta$. To proceed, we introduce an arbitrary auxiliary probability space $(\widehat{\Omega},\widehat{\F},\widehat{\P})$, on which a two-dimensional standard Brownian motion $(\widehat{W},\widehat{B})$ is defined. Consider the optimal stochastic control problem on $(\widehat{\Omega},\widehat{\F},\widehat{\P})$ in the strong form:
    \begin{align}\label{eq:v:phat}
    \widetilde{V}(t,x, y,z)={}\sup_{({\theta},{\pi})\in{\mathcal{A}}} &\E^{\widehat{\P}}\left[F(\widehat{Y}_T,\widehat{Z}_T)U(\widehat{X}_T)| \widehat{X}_t=x, \widehat{Y}_t=y, \widehat{Z}_t=z\right]\\
    \text{ subject to }&
    \begin{cases}
    \d \widehat{X}_t =  {\pi}(t,\widehat{W},\widehat{B})\sigma \d \widehat{W}_t- k\left( {\theta}(t,\widehat{W},\widehat{B})^2\right) \d t,\\
    \d \widehat{Y}_t=\frac{1}{\sigma}\d \widehat{W}_t+{\theta}(t,\widehat{W},\widehat{B})\d \widehat{B}_t,\\
    \d \widehat{Z}_t=(\frac{1}{\sigma^2}+{\theta}(t,\widehat{W},\widehat{B})^2)\d t.
    \end{cases}\nonumber
    \end{align} 
    Clearly, each $({\theta},{\pi})\in\mathcal{A}$ uniquely determines the law of the state processes $(\widehat{X},\widehat{Y},\widehat{Z})$. Therefore, Problems \eqref{6} and \eqref{eq:v:phat} share the same value function $\widetilde{V}$.
    
    %the value function is not affected by the specification of the underlying probability space $(\widehat{\Omega},\widehat{\F},\widehat{\P})$ and the driving Brownian motion $(\widehat{W},\widehat{B})$. Consequently, Problems \eqref{6} and \eqref{eq:v:phat} have the same value function.

    Consider the following optimal stochastic control problem:
    \begin{align}
    \label{Q_value_function}
    {V}(t,x, y,z)={}\sup_{(\widehat{\vartheta},\widehat{\varpi})\in\widehat{\mathcal{A}}} &\E^{\widehat{\P}}\left[F(\widehat{Y}_T,\widehat{Z}_T)U(\widehat{X}_T)| \widehat{X}_t=x, \widehat{Y}_t=y, \widehat{Z}_t=z\right]\\
   \nonumber\text{subject to} &
    \begin{cases}
    \d \widehat{X}_t =  \widehat{\varpi}_t\sigma \d \widehat{W}_t- k\left( \widehat{\vartheta}_t^2\right) \d t,\\
    \d \widehat{Y}_t=\frac{1}{\sigma}\d \widehat{W}_t+\widehat{\vartheta}_t\d \widehat{B}_t,\\
    \d \widehat{Z}_t=(\frac{1}{\sigma^2}+\widehat{\vartheta}_t^2)\d t,
    \end{cases}
    \end{align}
     where $\widehat{\mathcal{A}}$ denotes the set of all $\F^{\widehat{W},\widehat{B}}$-progressively measurable processes $(\widehat{\vartheta},\widehat{\varpi})$ such that $0\le \widehat{\vartheta}_t\le C$ a.s. and $|\widehat{\varpi}_t|\leq C+\sup\limits_{s\in[0,t]}|\int_0^s f(\tau,\widehat{W},\widehat{B})\d\widehat{W}_\tau|+\sup\limits_{s\in[0,t]}|\int_0^sg(\tau,\widehat{W},\widehat{B})\d\widehat{B}_\tau|$ a.s., for some $C>0$, for some bounded $\{\B_{t+}\otimes\B_{t+}\}_{t\in[0,T]}$-progressively measurable functionals $f$ and $g$, and for all $t\in[0,T]$.
    Obviously, if $(\theta,\pi)\in\mathcal{A}$, then its realization by $(\widehat{W},\widehat{B})$ lies in $\widehat{\mathcal{A}}$. Thus, Problem \eqref{Q_value_function} is a relaxation of Problem \eqref{eq:v:phat} and we have $\widetilde V\leq V$. The following theorem is obvious.
        \begin{theorem} \label{recovery}Suppose that $(\widehat{\vartheta}^*,\widehat{\varpi}^*)\in\widehat{\mathcal{A}}$ solves 
          Problem \eqref{Q_value_function}.  If $(\widehat{\vartheta}^*,\widehat{\varpi}^*)$ is a realization of an admissible information-and-trading strategy $(\theta^*,\pi^*)$ by $(\widehat{W},\widehat{B})$, then $(\theta^*,\pi^*)$ solves Problem \eqref{optimization_problem}.
        \end{theorem}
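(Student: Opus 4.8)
The plan is to close the loop of equalities and inequalities already assembled in Section \ref{stoch_filt}, using the relaxation $\widetilde V\le V$ together with the realization hypothesis to squeeze the value of $(\theta^*,\pi^*)$ up to the optimal value of the relaxed problem. The backbone is the observation that for any fixed admissible strategy the objective value is identical across the three formulations \eqref{optimization_problem}, \eqref{6} and \eqref{eq:v:phat}: the change of measure \eqref{measure_trans} yields $\E[U(X_T)]=\E^{\widetilde{\P}}[F(Y_T,Z_T)U(X_T)]$, identifying \eqref{optimization_problem} with \eqref{6}, while the law-invariance of the objective (it depends on $(\theta,\pi)$ only through the joint law of the state processes) identifies \eqref{6} with \eqref{eq:v:phat}. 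I would record this as: for every $(\theta,\pi)\in\mathcal{A}$, the attained value is one and the same number, call it $J(\theta,\pi)$, in all three problems, and in particular $\widetilde V=\sup_{(\theta,\pi)\in\mathcal{A}}J(\theta,\pi)$. Recall also that $\widetilde V\le V$, since \eqref{Q_value_function} merely enlarges the admissible class to $\widehat{\mathcal{A}}$.

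First I would write down the realization identity. Since $(\widehat{\vartheta}^*,\widehat{\varpi}^*)$ is the realization of $(\theta^*,\pi^*)$ by $(\widehat W,\widehat B)$, that is $\widehat{\vartheta}^*_t=\theta^*(t,\widehat W,\widehat B)$ and $\widehat{\varpi}^*_t=\pi^*(t,\widehat W,\widehat B)$, substituting these into the controlled dynamics of \eqref{Q_value_function} reproduces verbatim the dynamics of \eqref{eq:v:phat} under the strategy $(\theta^*,\pi^*)$. Hence the two objective functionals agree path by path, so $J(\theta^*,\pi^*)$ equals the value attained by $(\widehat{\vartheta}^*,\widehat{\varpi}^*)$ in \eqref{Q_value_function}, which by hypothesis is the optimal value $V$.

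Next I would invoke the sandwich. On one hand $J(\theta^*,\pi^*)=V$ by the realization identity; on the other hand $(\theta^*,\pi^*)\in\mathcal{A}$, so by definition of the supremum in \eqref{eq:v:phat} we have $J(\theta^*,\pi^*)\le\widetilde V$. Combined with $\widetilde V\le V$, this gives $V\le\widetilde V\le V$, forcing $\widetilde V=V$ and $J(\theta^*,\pi^*)=\widetilde V$. Thus $(\theta^*,\pi^*)$ attains the supremum defining $\widetilde V$ in \eqref{eq:v:phat}, hence attains it in \eqref{6}, and finally in \eqref{optimization_problem} by the per-strategy value-invariance recorded in the first paragraph. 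Therefore $(\theta^*,\pi^*)$ solves Problem \eqref{optimization_problem}, as claimed.

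As the authors indicate, this is essentially a relaxation-and-recovery sandwich, so no single step is genuinely hard and the statement is indeed nearly immediate. The only point deserving explicit care is the first paragraph's claim that the per-strategy objective value is preserved between \eqref{6} and \eqref{eq:v:phat}: this rests on the fact that a fixed $(\theta,\pi)$ determines the same law of $(\widehat X,\widehat Y,\widehat Z)$ on either space and that $F(\cdot,\cdot)U(\cdot)$ is a fixed measurable function of the terminal states, so the expectation is a functional of that law alone. All the substantive work has already been carried out in building the auxiliary space $(\widehat\Omega,\widehat\F,\widehat\P)$ and the relaxed admissible class $\widehat{\mathcal{A}}$.
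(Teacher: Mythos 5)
Your proof is correct and takes essentially the same approach the paper intends: the paper declares Theorem \ref{recovery} ``obvious'' precisely because Section \ref{stoch_filt} has already established the per-strategy value invariance among \eqref{optimization_problem}, \eqref{6} and \eqref{eq:v:phat} and the relaxation inequality $\widetilde V\le V$, and your sandwich argument simply writes out that implicit reasoning in full. Nothing is missing or different in substance.
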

    
    \begin{remark}\label{primal_controlled_system}
        In most of the existing literature, observable stochastic control systems under the primal measure $\P$ are often obtained with the help of the innovation processes $M$ and $N$: $$\d M_t=\d \widetilde{W}_t-\frac{\mu_t}{\sigma}\d t,\qquad \d N_t=\d \widetilde{B}_t-\mu_t\vartheta_t\d t,$$
    where $\mu_t=\E[\mu|\F^{\widetilde{W},\widetilde{B}}_t]$ is the posterior mean of the unknown drift $\mu$. In our setting, let $$G(y,z)\triangleq\frac{F_y(y,z)}{F(y,z)},$$
    then an application of the Bayes formula similar to Lemma \ref{distribution_mu} gives $\mu_t=G(Y_t,Z_t)$. The corresponding controlled system under $\P$ becomes \begin{align}\label{7}
        \sup_{({\theta},\pi)\in\mathcal{A}} &\E\left[U({X}_T)| {X}_t=x, {Y}_t=y, {Z}_t=z\right]\\
        \nonumber\text{subject to }&
        \begin{cases}
        \d {X}_t = \varpi_t[G(Y_t,Z_t)\d t+\sigma \d M_t]-k(\vartheta_t^2)\d t,\\	
        \d {Y}_t=\frac{1}{\sigma}\d M_t+{\vartheta}_t\d N_t+(\frac{1}{\sigma^2}+\vartheta_t^2)G(Y_t,Z_t)\d t,\\
        \d {Z}_t=(\frac{1}{\sigma^2}+\vartheta_t^2)\d t.
        \end{cases}
        \end{align} 
    In this case, the innovation processes $M$ and $N$ remain control-dependent. Moreover, the presence of the nontrivial function $G$ further complicates the dynamics. For this reason, we choose to work under the equivalent measure $\widetilde{\P}$.
    %However, we choose to work under the equivalent measure $\widetilde{\P}$, even if it depends on the choice of information acquisition strategy, because the inclusion of $G$ makes \eqref{7} even more complex than \eqref{6}, at least formally. Note though, that the optimization problem \eqref{6} is already highly irregular due to our introduction of controllable and costly information acquisition.
    \end{remark}

    Our optimization problem \eqref{6} is highly irregular due to the introduction of controllable and costly information acquisition. Some technical challenges include:
    \begin{itemize}
        \item[(a)] The problem operates in an incomplete market setting where the risk of $\widetilde{B}$ cannot be hedged through risky asset transactions.
        \item[(b)] The system dynamics are nonlinear in the controlled variables $(\vartheta,\varpi)$.
        \item[(c)] The objective function is generally non-concave in the state variables  $y$ and $z$.
        \item[(d)] The corresponding HJB equation 
        \begin{equation}\label{HJB}\begin{cases}
        \underset{\pi,\theta}{\sup}\left\{V_t-k(\theta^2)V_x+(\frac{1}{\sigma^2}+\theta^2)(V_z+\frac{1}{2}V_{yy})+\frac{1}{2}\pi^2\sigma^2V_{xx}+\pi V_{xy}\right\}=0,\\
    V(T,x,y,z)=F(y,z)U(x)\end{cases}\end{equation} 
    is degenerate due to the lack of a second-order term in $z$.
    \end{itemize}   
    
    Under additional assumptions, such as bounded support of $\mu$ and polynomial growth conditions on the utility function $U$, the value function $V(t,x,y,z)$ can be characterized as the unique viscosity solution of the HJB equation \eqref{HJB}; see \citet{cohen2025} for details. 
    
    However, in the subsequent sections, we focus on the special case of a Gaussian prior (whose support is not bounded), where the classical solution to the HJB equation can be explicitly solved, the optimal functional $(\theta^*,\pi^*)$ can be explicitly computed, and a verification theorem can be established to confirm its optimality. The optimal realized information acquisition $\vartheta^*$, which is of particular interest, can be obtained either analytically or numerically.

		\section{Truncated linear information cost}\label{truncated}
		
		In this section, we make the following assumption:
        \begin{assumption}\label{ass:TL}
        \begin{itemize}
            \item[(a)] The utility function $U$ exhibits constant absolute risk aversion (CARA): $U(x)=-\frac{1}{\gamma}e^{-\gamma x},\; x\in\mathbb{R}$, where $\gamma>0$ is a constant;
            \item[(b)] The drift parameter $\mu$ follows a normal prior distribution: $\mu\sim N(\mu_0,\sigma_0^2)$, where $\mu_0\in\mathbb{R}$ and $\sigma_0>0$ are constants;
            \item[(c)] The information cost function $k$ is truncated linear, given by $$k(x)=cx\1_{[0,\beta^2]}(x)+\infty\1_{(\beta^2,+\infty)}(x),\quad x\in[0,+\infty),$$ 
            where $c>0$ and $\beta>0$ are constants. 
        \end{itemize}  
        \end{assumption}
        We proceed to solve the stochastic control problem \eqref{Q_value_function} explicitly under Assumption \ref{ass:TL}. The HJB equation in this case can be reformulated as a two-phase free-boundary problem:
        \begin{equation}
        \begin{cases}  
        V_t+\frac{1}{\sigma^2}(V_z+\frac{1}{2}V_{yy})-\frac{V_{xy}^2}{2\sigma^2V_{xx}}=0,  & \text{if}\; V_z+\frac{1}{2}V_{yy}-cV_x<0,\\
				 V_t+(\frac{1}{\sigma^2}+\beta^2)(V_z+\frac{1}{2}V_{yy})-\frac{V_{xy}^2}{2\sigma^2V_{xx}}-c \beta^2 V_x=0, & \text{if}\; V_z+\frac{1}{2}V_{yy}-cV_x\ge 0,\\                V(T,x,y,z)=F(y,z)U(x),\end{cases}\label{fb:V}
		\end{equation}
        where $F$ is given by \eqref{F:normal}. We expect to find a classical solution $V(t,x,y,z)\in C^{1,2,2,1}([0,T]\times \mathbb{R}\times\mathbb{R}\times [0,+\infty))$. 
        
        Due to the separable structure induced by the CARA utility, we conjecture a solution of the form: $$V(t,x,y,z)=-\frac{1}{\gamma}e^{-\gamma x}\exp\{h(t,y,z)\}.$$
        Substituting this ansatz into \eqref{fb:V} yields the following free-boundary problem for $h$:
		\begin{align}\label{18}
			\begin{cases}  h_t+\frac{1}{\sigma^2}(h_z+\frac{1}{2}h_{yy})=0, & \text{if}\; h_z+\frac{1}{2}h_{yy}+\frac{1}{2}h_{y}^2+c\gamma >0,\\
				 h_t+(\frac{1}{\sigma^2}+\beta^2)(h_z+\frac{1}{2}h_{yy})+\frac{\beta^2}{2}h_{y}^2+\beta^2c\gamma=0 , & \text{if}\; h_z+\frac{1}{2}h_{yy}+\frac{1}{2}h_{y}^2+c\gamma\le 0,\\
                 h(T,y,z)=-\frac{1}{2}\log(z\sigma_0^2+1)+\frac{y^2\sigma_0^2+2y\mu_0-z\mu_0^2}{2(z\sigma_0^2+1)}.
                 \end{cases}
		\end{align}
        It is straightforward to check that $h_z+\frac{1}{2}h_{yy}+\frac{1}{2}h_{y}^2=0$ at $t=T$, and hence the first regime (inequality strict) applies near the terminal time.

        For notational convenience, define $$H(t,z)\triangleq \frac{1}{\sigma_0^2z+\frac{\sigma_0^2}{\sigma^2}(T-t)+1},\qquad  (t,z)\in [0,T]\times (-\frac{1}{\sigma_0^2},+\infty).$$ We are now prepared to present the explicit solution to the free-boundary problem \eqref{18}.
    \begin{theorem}\label{value_function}
         The free-boundary problem \eqref{18} admits a classical solution $h\in C^{1,2,1}([0,T]\times \mathbb{R}\times [0,+\infty))$, which is given by
        \begin{equation*}
			h(t,y,z)=P(t,z)+H(t,z)\left[\mu_0y+\frac{1}{2}\sigma_0^2y^2\right].
		\end{equation*}
        Here, the function $P:[0,T]\times[0,+\infty)\rightarrow\mathbb{R}$ is given by $$P(t,z)=\begin{cases}
             \frac{1}{2}\log H(t,z)
             %\hphantom{\frac{1}{2}\log}
             -\frac{H(t,z)}{2}\left[\mu_0^2z+\frac{\mu_0^2-\sigma_0^2}{\sigma^2}(T-t)\right], & z\in[\delta(t)\vee 0,+\infty),\\
             P(t_1,\delta(t_1))+\beta^2c\gamma(t_1-t)
             %\hphantom{\frac{1}{2}\log}
             -\frac{\mu_0^2}{2\sigma_0^2}\left[H(t_1,\delta(t_1))-H(t,z)\right] &\\
		\hphantom{\frac{1}{2}\log}-\frac{\beta^2+\frac{1}{\sigma^2}}{2\beta^2}\left[\log H(t_1,\delta(t_1)) -\log H(t,z) \right],& z\in [0,\delta(t)\vee0),
        \end{cases}$$
        where the free boundary is given by
        $$z=\delta(t)=-\frac{T-t}{\sigma^2}-\frac{1}{\sigma_0^2}+\sqrt{\frac{T-t}{2c\sigma^2\gamma}},$$
        and when $\delta(t)>0$, $t_1$ is the unique non-negative solution of the following  equation:
        $$(\frac{1}{\sigma^2}+\beta^2)(t_1-t)+z=-\frac{T-t_1}{\sigma^2}-\frac{1}{\sigma_0^2}+\sqrt{\frac{T-t_1}{2c\sigma^2\gamma}}.$$
        \end{theorem}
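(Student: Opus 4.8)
The plan is to proceed by a guess-and-verify argument built on the separable ansatz $h(t,y,z)=P(t,z)+H(t,z)[\mu_0 y+\tfrac12\sigma_0^2 y^2]$, exploiting two elementary identities that follow by differentiating the definition of $H$: namely $H_z=-\sigma_0^2H^2$ and $H_t+\tfrac1{\sigma^2}H_z=0$. First I would substitute the ansatz into each branch of \eqref{18}. Using $H_z=-\sigma_0^2H^2$, a direct computation shows that the switching quantity collapses to
\[
h_z+\tfrac12 h_{yy}+\tfrac12 h_y^2+c\gamma=P_z+\tfrac12\sigma_0^2H+\tfrac12\mu_0^2H^2+c\gamma,
\]
with every $y$- and $y^2$-term cancelling; likewise, the transport identity $H_t+\tfrac1{\sigma^2}H_z=0$ (combined with $H_z=-\sigma_0^2H^2$) makes the coefficients of $y$ and $y^2$ vanish in both PDE branches as well. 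The net effect is that $H$ is precisely the coefficient forced by the terminal data, and the full free-boundary problem reduces to a scalar one for $P(t,z)$: a linear transport equation $P_t+\tfrac1{\sigma^2}(P_z+\tfrac12\sigma_0^2H)=0$ where the switching function $\Phi(t,z):=P_z+\tfrac12\sigma_0^2H+\tfrac12\mu_0^2H^2+c\gamma$ is positive, and $P_t+(\tfrac1{\sigma^2}+\beta^2)(P_z+\tfrac12\sigma_0^2H)+\tfrac{\beta^2}{2}\mu_0^2H^2+\beta^2c\gamma=0$ where it is nonpositive.

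Because $\Phi$ no longer depends on $y$, the free boundary is a curve $z=\delta(t)$ in the $(t,z)$-plane, and I would solve each regime by the method of characteristics. In Region 1 the characteristics solve $\dot z=\tfrac1{\sigma^2}$, along which $H$ is constant (this is exactly the content of $H_t+\tfrac1{\sigma^2}H_z=0$), so integrating the inhomogeneous transport equation backward from the terminal data $P(T,z)=\tfrac12\log H(T,z)-\tfrac12\mu_0^2 z\,H(T,z)$ yields the stated closed form on $z\ge\delta(t)\vee0$. Imposing $\Phi(t,z)=0$ on this explicit Region-1 solution then produces the free boundary; the algebra simplifies sharply because $1/H=\sigma_0^2\sqrt{(T-t)/(2c\sigma^2\gamma)}$ on $\{\Phi=0\}$, and solving for $z$ gives exactly $\delta(t)$. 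A sign analysis of $\Phi$ along Region 1 — noting $\Phi\to c\gamma>0$ as $z\to+\infty$ and $\Phi=0$ at $z=\delta(t)$ — confirms that Region 1 is $\{z>\delta(t)\}$, and since $\delta(T)=-1/\sigma_0^2<0$, that the first regime holds near $t=T$, consistent with the remark following \eqref{18}.

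For Region 2 ($z<\delta(t)$) the characteristics have slope $\tfrac1{\sigma^2}+\beta^2$; each one, traced forward in $t$, strikes the free boundary at a time $t_1$ determined by $(\tfrac1{\sigma^2}+\beta^2)(t_1-t)+z=\delta(t_1)$, which is precisely the transcendental equation in the statement. I would first check that this equation has a unique nonnegative root $t_1$ (by monotonicity of its two sides in $t_1$, using the explicit form of $\delta$), so that the Region-2 characteristics foliate $\{z<\delta(t)\}$ without crossing. Integrating the Region-2 transport equation along these characteristics with boundary value $P(t_1,\delta(t_1))$ inherited from Region 1 then produces the second branch of $P$.

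The step I expect to be the main obstacle is the smooth-fit regularity across $z=\delta(t)$ needed to certify a genuine $C^{1,2,1}$ (not merely piecewise) solution. By construction $P$ is continuous across the boundary; subtracting the two PDEs gives the useful identity $P_t^{(2)}-P_t^{(1)}=-\beta^2\Phi$, so on $\{\Phi=0\}$ the two equations force the same $P_t$ for a common $P_z$, thereby reducing the smooth-fit requirement to continuity of $P_z$. I would establish this $C^1$ matching of $P$ in $z$ (whence continuity of $P_t$, and then of $h_y,h_{yy},h_{yz}$ through the ansatz), and simultaneously verify the self-consistency inequality $\Phi\le0$ throughout Region 2 so that the candidate genuinely solves the stated branch there. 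Alongside, I would record the elementary well-posedness facts — $z>-1/\sigma_0^2$ keeps $H>0$ and $\Phi$ finite, $\delta$ and $t_1$ are well-defined, and the formulas reduce correctly at $t=T$ — to conclude that the pasted $h$ is the claimed classical solution.
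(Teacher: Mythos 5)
Your route is the paper's route: the same quadratic ansatz with the $y$-coefficient pinned to $H(t,z)$, the same reduction to scalar transport equations for $P$ in each regime, the same derivation of $\delta(t)$ by imposing the switching condition on the explicit Region-1 solution, and the same characteristic equation for $t_1$. Your two streamlinings are both correct: verifying the identities $H_z=-\sigma_0^2H^2$ and $H_t+\frac{1}{\sigma^2}H_z=0$ directly (rather than deriving $Q$ and $R$ from the general system in Lemma \ref{characteristic_method} and invoking the $a-b$ invariance) is a legitimate shortcut, and your identity $P_t^{(2)}-P_t^{(1)}=-\beta^2\Phi$ is a clean replacement for the paper's two-directional-derivative argument, since it converts continuity of $P_z$ across the boundary into continuity of $P_t$ for free.

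The genuine gap is that the two steps you explicitly defer --- continuity of $P_z$ across $z=\delta(t)$, and the inequality $\Phi\le0$ throughout Region 2 (not merely on the boundary) --- are precisely where the content of the theorem lives, and neither follows from the soft structure you describe; your smooth-fit reduction presupposes $P_z$-continuity rather than yielding it, and $\Phi\le0$ is a global consistency requirement on the open region, so no boundary identity alone can deliver it. In the paper both are settled by one explicit cancellation mechanism that your plan does not contain: write $\Phi(t_0,z_0)$ in Region 2 through the boundary data at the hitting point $(t_1,\delta(t_1))$ using \eqref{sol_left}, differentiate the hitting time via the implicit function theorem to get $\frac{\partial t_1}{\partial z_0}=-\bigl(\frac{1}{4c\sigma^2\gamma}(\frac{T-t_1}{2c\sigma^2\gamma})^{-1/2}+\beta^2\bigr)^{-1}$, and combine the resulting identity \eqref{1} with the free-boundary relation \eqref{3}, namely $c\gamma=\frac{\sigma_0^4}{2\sigma^2}H(t_1,\delta(t_1))^2(T-t_1)$; after the algebra in \eqref{boundary_derivative} everything collapses to
$$\Phi(t_0,z_0)=\frac{\sigma_0^2}{2\beta^2\sigma^2}\left[H(t_1,\delta(t_1))-H(t_0,z_0)\right]\le 0,$$
the sign holding because $\sigma_0^2z+\frac{\sigma_0^2}{\sigma^2}(T-t)+1$ increases at rate $\sigma_0^2\beta^2>0$ along Region-2 characteristics, so $H$ decreases from $(t_0,z_0)$ to $(t_1,\delta(t_1))$. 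The same computation, run in the limit $z_0\to\delta(t_0)-$, is what produces the one-sided derivative $P_{z-}$ and the matching condition \eqref{left_derivative} that certifies the $C^1$ fit. Without exhibiting this mechanism (or a substitute, e.g.\ a comparison/monotonicity argument for $\Phi$ along characteristics), your proposal asserts the regime-consistency inequality and the smooth fit rather than proving them; everything else in the proposal would go through as written.
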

       \begin{proof} The key idea is to conjecture a solution of quadratic form: $h(t,y,z)=P(t,z)+Q(t,z)y+\frac{1}{2}R(t,z)y^2$, and reduce the second-order PDE \eqref{18} to a system of first-order PDEs for the coefficients $P$, $Q$ and $R$. The resulting system is then solved using the method of characteristics. The rigorous argument, however, is lengthy and  technical, and it is deferred to Appendix \ref{TL_proof}. \end{proof} 

    \begin{corollary}\label{cor:V}
        The free-boundary problem \eqref{fb:V} has a classical solution $V\in C^{1,2,2,1}([0,T]\times \mathbb{R}\times\mathbb{R}\times [0,+\infty))$:
        \begin{equation*}
			V(t,x,y,z)=-\frac{1}{\gamma}e^{-\gamma x}\exp\left\{P(t,z)+H(t,z)\left[\mu_0y+\frac{1}{2}\sigma_0^2y^2\right]\right\},
		\end{equation*} where $P(t,z)$ is given in Theorem \ref{value_function}.
    \end{corollary}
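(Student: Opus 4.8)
The plan is to verify directly that the proposed $V$ satisfies the free-boundary problem \eqref{fb:V} by substituting the CARA-separable ansatz and reducing everything to the already-established Theorem \ref{value_function}. Since the excerpt derived \eqref{18} from \eqref{fb:V} precisely through the substitution $V=-\frac{1}{\gamma}e^{-\gamma x}\exp\{h\}$, the essential content of the corollary is that this correspondence is a genuine, reversible equivalence between classical solutions of the two free-boundary problems, and that the regularity $h\in C^{1,2,1}$ lifts to $V\in C^{1,2,2,1}$.

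First I would record the chain-rule identities for $V=-\frac{1}{\gamma}e^{-\gamma x}e^{h(t,y,z)}$, namely $V_x=-\gamma V$, $V_{xx}=\gamma^2 V$, $V_t=V h_t$, $V_z=V h_z$, $V_y=V h_y$, $V_{yy}=V(h_{yy}+h_y^2)$, and $V_{xy}=-\gamma V h_y$. The crucial structural fact is that $V<0$ everywhere, so $V$ never vanishes and $V_{xx}=\gamma^2 V\neq 0$; this legitimizes dividing by $V$ and forming the quotient $\frac{V_{xy}^2}{2\sigma^2 V_{xx}}=\frac{V h_y^2}{2\sigma^2}$ wherever it appears.

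Next I would substitute these identities into each branch of \eqref{fb:V} and cancel the common nonzero factor $V$. In the first branch the term $\frac{1}{2\sigma^2}h_y^2$ produced by $V_{yy}$ cancels exactly against $-\frac{V_{xy}^2}{2\sigma^2 V_{xx}}$, leaving $h_t+\frac{1}{\sigma^2}(h_z+\frac{1}{2}h_{yy})=0$; in the second branch the same cancellation leaves a residual $\frac{\beta^2}{2}h_y^2$, while $-c\beta^2 V_x=c\beta^2\gamma V$ contributes $c\beta^2\gamma$, yielding the second equation of \eqref{18}. Simultaneously the switching quantity transforms as $V_z+\frac{1}{2}V_{yy}-cV_x=V(h_z+\frac{1}{2}h_{yy}+\frac{1}{2}h_y^2+c\gamma)$, and because $V<0$ the sign reverses, so the branch conditions ``$<0$'' and ``$\ge 0$'' become ``$>0$'' and ``$\le 0$'' respectively, matching the regimes of \eqref{18} exactly. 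For the terminal data I would check $h(T,y,z)=\log F(y,z)$ using the closed form \eqref{F:normal}, which reproduces $V(T,x,y,z)=F(y,z)U(x)$ since $U(x)=-\frac{1}{\gamma}e^{-\gamma x}$.

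Finally, invoking Theorem \ref{value_function}, which guarantees that $h\in C^{1,2,1}$ solves \eqref{18}, the computations above show that the stated $V$ solves \eqref{fb:V}. Smoothness is immediate: $V$ is the product of the smooth factor $-\frac{1}{\gamma}e^{-\gamma x}$ with $\exp\{h\}$, and composition with the exponential preserves $C^{1,2,1}$ regularity in $(t,y,z)$, so $V\in C^{1,2,2,1}([0,T]\times\mathbb{R}\times\mathbb{R}\times[0,+\infty))$. I do not expect a substantive obstacle here; the only point requiring genuine care is tracking the sign reversal in the free-boundary condition induced by $V<0$, since a careless substitution would interchange the two regimes and mismatch the switching surface.
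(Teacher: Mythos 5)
Your proposal is correct and follows essentially the same route as the paper, which states Corollary \ref{cor:V} without separate proof precisely because it is immediate from Theorem \ref{value_function} via the ansatz substitution $V=-\frac{1}{\gamma}e^{-\gamma x}\exp\{h\}$ used in the text to derive \eqref{18} from \eqref{fb:V}. Your explicit verification---the chain-rule identities, the exact cancellation of the $h_y^2$ term against $\frac{V_{xy}^2}{2\sigma^2 V_{xx}}$, the sign reversal of the switching condition due to $V<0$, and the lifting of $C^{1,2,1}$ regularity of $h$ to $C^{1,2,2,1}$ regularity of $V$---is exactly that argument spelled out, with the sign-reversal point being the only subtlety and handled correctly.
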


         Now we focus on finding the optimal information-and-trading strategy. The optimizers for HJB equation \eqref{HJB} have the feedback form
            \begin{equation*}
                \begin{cases}\pi(t,x,y,z)=-\frac{V_{xy}}{\sigma^2V_{xx}}=\frac{1}{\sigma^2\gamma}h_y=\frac{1}{\sigma^2\gamma}\frac{\mu_0+\sigma_0^2 y}{\sigma_0^2z+\frac{\sigma_0^2}{\sigma^2}(T-t)+1},\\
                \theta(t,x,y,z)=\beta\1_{\left[0,-\frac{T-t}{\sigma^2}-\frac{1}{\sigma_0^2}+\sqrt{\frac{T-t}{2c\sigma^2\gamma}}\right)}(z).\end{cases}
            \end{equation*}
        Plugging them back into the controlled system \eqref{Q_value_function}, we obtain
            %\begin{alignat*}{2}
			%&\d \widehat{Y}_t=\frac{1}{\sigma}\d \widehat{W}_t+\beta\1_{\left[0,-\frac{T-t}{\sigma^2}-\frac{1}{\sigma_0^2}+\sqrt{\frac{T-t}{2c\sigma^2\gamma}}\right)}(\widehat{Z}_t)\d \widehat{B}_t,\quad  &\widehat{Y}_0=0;   \\
			%&\d \widehat{Z}_t=\left(\frac{1}{\sigma^2}+\beta^2\1_{\left[0,-\frac{T-t}{\sigma^2}-\frac{1}{\sigma_0^2}+\sqrt{\frac{T-t}{2c\sigma^2\gamma}}\right)}(\widehat{Z}_t)\right)\d t,  & \widehat{Z}_0=0.
		%\end{alignat*}
        %The solution to this system is 
        $$\widehat{Z}_t=
		\frac{1}{\sigma^2}t+\beta^2(t\land t^*), \qquad \widehat{Y}_t=
		\frac{1}{\sigma}\widehat{W}_t+\beta\widehat{B}_{t\land t^*},$$
        where $t^*=\inf\{t\ge 0:\widehat{Z}_t\ge \delta(t)\}$ is the first time when the state process $(t,\widehat{Z}_t)$ hits the free boundary $z=\delta(t)$. In fact, $(t,\widehat{Z}_t)$ can hit the free boundary at most once, this is because the slope of the free boundary satisfies
        \begin{equation*}
        \delta'(t)=\frac{1}{\sigma^2}-\frac{1}{4c\sigma^2\gamma}\left(\frac{T-t}{2c\sigma^2\gamma}\right)^{-1/2}<\frac{1}{\sigma^2}\le \frac{\d \widehat{Z}_t}{\d t}.
        \end{equation*}
	To express $t^*$ explicitly, consider the following equation:
		$$t=\left(\frac{1}{\sigma^2}+\beta^2\right)^{-1}\left(-\frac{T-t}{\sigma^2}-\frac{1}{\sigma_0^2}+\sqrt{\frac{T-t}{2c\sigma^2\gamma}}\right).$$
        This equation has at most one non-negative solution. If such a solution exists, then it must be equal to $t^*$. Otherwise, we have $t^*=0$. By defining $$\widehat{\vartheta}^*_t={\theta}(t,\widehat{X}_t,\widehat{Y}_t,\widehat{Z}_t)\qquad\widehat{\varpi}^*_t={\pi}(t,\widehat{X}_t,\widehat{Y}_t,\widehat{Z}_t),$$ we construct a candidate solution to the relaxed optimal stochastic control problem \eqref{Q_value_function}:
        $$\widehat{\vartheta}^*_t=\beta\1_{[0,t^*]}(t),\quad
        \widehat{\varpi}^*_t=\frac{1}{\sigma^2\gamma}\frac{\sigma_0^2(\frac{1}{\sigma}\widehat{W}_t+\beta\widehat{B}_{t\land t^*})+\mu_0}{\sigma_0^2 (\frac{1}{\sigma^2}t+\beta^2(t\land t^*))+\frac{\sigma_0^2}{\sigma^2}(T-t)+1}.$$
        The following theorem confirms its optimality.
        \begin{theorem}\label{verification_theorem} 
        Suppose that Assumption \ref{ass:TL} holds.
        For all $(t,\omega_1,\omega_2)\in[0,T]\times C([0,T])\times C([0,T])$,  let
            \begin{align*}
               & \theta^*(t,\omega_1,\omega_2)=\beta\1_{[0,t^*]}(t),\\
               & \pi^*(t,\omega_1,\omega_2)=
                \frac{1}{\sigma^2\gamma}\frac{\sigma_0^2(\frac{1}{\sigma}\omega_1(t)+\beta\omega_2(t\land t^*))+\mu_0}{\sigma_0^2 (\frac{1}{\sigma^2}t+\beta^2(t\land t^*))+\frac{\sigma_0^2}{\sigma^2}(T-t)+1}.  
            \end{align*}
            Then $(\theta^*,\pi^*)\in\mathcal{A}$ is an optimal solution to Problem \eqref{optimization_problem}. Moreover,
              $(\varpi^*,\vartheta^*)=\{(\varpi^*_t,\vartheta^*_t)\triangleq(\pi^*(t,\widetilde{W},\widetilde{B}),\theta^*(t,\widetilde{W},\widetilde{B}))\}_{t\in [0,T]}$ is the unique optimal realized strategy. In particular, $\vartheta^*$ is non-random.
        \end{theorem}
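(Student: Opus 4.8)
The plan is to combine the recovery principle of Theorem~\ref{recovery} with a classical verification argument for the relaxed control problem \eqref{Q_value_function}, using the smooth solution $V$ supplied by Corollary~\ref{cor:V}. Concretely, I would (i) check that $(\theta^*,\pi^*)$ is admissible and that its realization by $(\widehat W,\widehat B)$ coincides with the candidate $(\widehat\vartheta^*,\widehat\varpi^*)$; (ii) prove via It\^o's formula that $V$ dominates the payoff of every competitor in $\widehat{\mathcal A}$ and is attained by $(\widehat\vartheta^*,\widehat\varpi^*)$, so that the candidate is optimal for \eqref{Q_value_function} and $\widetilde V=V$; (iii) transfer optimality to the original problem \eqref{optimization_problem} through Theorem~\ref{recovery}; and (iv) establish uniqueness and non-randomness of the optimal realized strategy from the strict concavity/linearity structure of the HJB Hamiltonian.

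For admissibility, note that $\theta^*$ is deterministic and bounded by $\beta$, so conditions (a) and (c) of Definition~\ref{theta} are immediate and (b) holds since \eqref{signal_process} reduces to a linear SDE with bounded deterministic drift coefficient $\mu\beta\1_{[0,t^*]}$. For $\pi^*$, I would observe that it is an affine functional of $\omega_1(t)$ and $\omega_2(t\wedge t^*)$ with deterministic, bounded, strictly positive denominator $D(t)=\sigma_0^2(\tfrac1{\sigma^2}t+\beta^2(t\wedge t^*))+\tfrac{\sigma_0^2}{\sigma^2}(T-t)+1$; writing $\widetilde W_t=\int_0^t\d\widetilde W_\tau$ and $\widetilde B_{t\wedge t^*}=\int_0^t\1_{[0,t^*]}(\tau)\,\d\widetilde B_\tau$ exhibits the dominating stochastic integrals required by Definition~\ref{pi}(b), with bounded deterministic $f,g$. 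The same bounds place $(\widehat\vartheta^*,\widehat\varpi^*)$ in $\widehat{\mathcal A}$, and substituting $(\widehat W,\widehat B)$ into $(\theta^*,\pi^*)$ reproduces the candidate exactly, so the hypothesis of Theorem~\ref{recovery} will be available once optimality in \eqref{Q_value_function} is shown.

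The core is the verification step. Applying It\^o's formula to $s\mapsto V(s,\widehat X_s,\widehat Y_s,\widehat Z_s)$ along the dynamics of \eqref{Q_value_function}, the drift equals the Hamiltonian in \eqref{HJB}, which is $\le 0$ for every admissible $(\widehat\vartheta,\widehat\varpi)$ and $=0$ for the feedback maximizers generating $(\widehat\vartheta^*,\widehat\varpi^*)$. Hence, after localization, $V(s,\widehat X_s,\widehat Y_s,\widehat Z_s)$ is a supermartingale in general and a local martingale under the candidate; combined with the terminal condition $V(T,\cdot)=F(\cdot)U(\cdot)$ this gives $V\ge \E^{\widehat\P}[F(\widehat Y_T,\widehat Z_T)U(\widehat X_T)]$ with equality for the candidate. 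The main obstacle is precisely the passage to the limit in the localization: because $V$ has the exponential--quadratic form $-\tfrac1\gamma e^{-\gamma x}e^{h(t,y,z)}$ with $h$ quadratic in $y$, I must show that the stopped processes $\{V(\tau_n,\widehat X_{\tau_n},\widehat Y_{\tau_n},\widehat Z_{\tau_n})\}_n$ are uniformly integrable and that the stochastic-integral part is a true martingale. This is where the admissibility constraints do the work: $\widehat\vartheta$ is bounded and $\widehat\varpi$ is dominated by suprema of stochastic integrals of bounded integrands, which carry Gaussian-type tails; under the candidate, $\widehat Z$ is deterministic, $\widehat Y$ is Gaussian, and $\widehat X_T$ is a Wiener integral of a Gaussian process, so $F(\widehat Y_T,\widehat Z_T)U(\widehat X_T)$ is an exponential of a quadratic Gaussian functional whose integrability I would verify directly (noting $\widehat Z_T>0>-1/\sigma_0^2$ keeps $F$ finite). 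These moment estimates are routine but technical, and represent the real labour of the proof.

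Granted verification, Theorem~\ref{recovery} immediately upgrades optimality of $(\widehat\vartheta^*,\widehat\varpi^*)$ for \eqref{Q_value_function} to optimality of $(\theta^*,\pi^*)$ for \eqref{optimization_problem}. For uniqueness I would argue pointwise in the Hamiltonian: any optimal control forces the It\^o drift to vanish $\d t\times\d\widehat\P$-a.e.\ (else a strict loss), hence must be a pointwise maximizer a.e. Since $V_{xx}<0$, the map $\pi\mapsto \tfrac12\pi^2\sigma^2V_{xx}+\pi V_{xy}$ is strictly concave, so $\pi^*=-V_{xy}/(\sigma^2V_{xx})$ is the unique trading maximizer; the information term is linear in $a=\theta^2$ with slope $V_z+\tfrac12V_{yy}-cV_x$, which vanishes only on the free boundary, so the maximizer $\theta^2\in\{0,\beta^2\}$ is unique off the boundary. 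The slope comparison $\delta'(t)<\tfrac1{\sigma^2}\le \d\widehat Z_t/\d t$ forces $(t,\widehat Z_t)$ to cross $z=\delta(t)$ transversally at most once, so the boundary set has zero Lebesgue measure in time and the optimal realized $(\varpi^*,\vartheta^*)$ is a.e.\ unique. Finally, $\vartheta^*_t=\beta\1_{[0,t^*]}(t)$ with $t^*$ a deterministic constant, yielding the asserted non-randomness of the optimal information acquisition.
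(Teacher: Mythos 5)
Your architecture — admissibility check, verification for the relaxed problem \eqref{Q_value_function}, transfer via Theorem \ref{recovery}, uniqueness from strict concavity in $\pi$ and linearity in $\theta^2$ with the transversal boundary crossing — matches the paper's plan, and your steps (i), (iii), (iv) are essentially the paper's. The gap is in step (ii), precisely at the point you dismiss as ``routine but technical.'' Your direct route (Itô on $V(s,\widehat X_s,\widehat Y_s,\widehat Z_s)$, localize, then prove uniform integrability of the stopped values) requires, for \emph{every} admissible competitor, moment bounds of the type $\E^{\widehat\P}\bigl[\exp\{(1+\epsilon)\gamma\sigma\,|\int_0^T\widehat\varpi_t\,\d\widehat W_t|\}\bigr]<\infty$, since $|V|^{1+\epsilon}$ contains $e^{(1+\epsilon)\gamma|\widehat X_T|}$-type factors. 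But admissibility only dominates $\widehat\varpi$ by suprema of stochastic integrals with bounded integrands, i.e.\ by Gaussian-type suprema; exponentials of \emph{squares} of such suprema (which is what the time-changed quadratic variation produces) have finite expectation only under a smallness condition on the product of the dominating constants and the horizon (Fernique-type), so for a fixed admissible competitor with large dominating functionals the global UI bound can simply fail. Note also that Fatou cannot substitute for UI here: since $V<0$, Fatou applied to $-V(\tau_n)\ge 0$ yields $\E[V(T,\cdot)]\ge\limsup_n\E[V(\tau_n,\cdot)]$, the wrong direction for the supermartingale inequality. Your Gaussian computations are valid for the \emph{candidate} control (where $\widehat Z$ is deterministic and $\widehat Y$ Gaussian), but that only gives attainment, not domination of all competitors.

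The paper's proof contains exactly the missing device: instead of working with $V$ itself, it applies Itô to the ratio $V(t,\widehat X_t,\widehat Y_t,\widehat Z_t)\,\zeta_t^{-1}$, where $\zeta$ is the exponential local martingale $\d\zeta_t=\zeta_t(\phi_t\,\d\widehat W_t+\varphi_t\,\d\widehat B_t)$ with $\phi,\varphi$ chosen so that the martingale part of $V$ is cancelled. The resulting process has \emph{only} a drift term, equal to $\zeta_t^{-1}$ times the Hamiltonian in \eqref{HJB}, hence is pathwise nonincreasing — no localization and no UI are ever needed. The entire integrability burden is thereby shifted onto a single statement, $\E^{\widehat\P}[\zeta_T]=1$ (needed because $V(0,\cdot)<0$, so $\E^{\widehat\P}[\zeta_T]\le 1$ alone gives the inequality in the wrong direction), and \emph{this} statement is rescued by a piecewise argument that has no analogue for UI: Novikov's condition is verified on subintervals $[t_1,t_2]$ short enough that the constant $A(t_2-t_1)$ multiplying $\sup_t\widehat W_t^2$ and the time-changed suprema is small, and then \citet[Corollary 3.5.14]{Karatzas1991} patches the martingale property together over $[0,T]$. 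Exponential moments over a short window are finite for \emph{any} fixed admissible control; exponential moments over the whole horizon, which your UI step needs, are not. So your proposal is not merely a different route with harder estimates — as written, the limit passage would stall for a genuine subclass of admissible competitors, and the $\zeta$-ratio trick (or some equivalent change-of-measure/localization-free device) is the idea required to close it.
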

     \begin{proof} See Appendix \ref{verif}. \ \end{proof}

     \begin{remark}\label{primal_controlled_system_2}
          In the case where the prior on $\mu$ is Gaussian, the classical Kalman–Bucy filtering theory applies. Given any fixed information acquisition strategy $\vartheta$, standard results (see, e.g., \citet[Proposition 6.14]{bain2009} or \citet[Theorem 12.1]{liptser1977statistics}) imply that under probability measure $\P$:
    $$\d\mu_t=\frac{\sigma_0^2}{Z_t\sigma_0^2+1}\left[\frac{1}{\sigma}\d M_t+\vartheta_t\d N_t\right],\qquad \mu|_{\F^{\widetilde{W},\widetilde{B}}_t}\sim N(\mu_t,\Sigma_t^2),$$
    where $\Sigma_t^2\triangleq \var(\mu|\F^{\widetilde{W},\widetilde{B}}_t)=\frac{\sigma_0^2}{\sigma_0^2Z_t+1}$ is the posterior variance. Therefore, the state process $$Z_t=\frac{1}{\Sigma_t^2}-\frac{1}{\sigma_0^2}$$ 
    is a shifted reciprocal of the posterior variance of $\mu$, while
    $$Y_t=\frac{1}{\sigma_0^2}[(\sigma_0^2Z_t+1)\mu_t-\mu_0]$$
    %(observed processes are $\tilde{W},\tilde{B}$; compute $\d Y_t$ and compare with Remark \ref{primal_controlled_system}).
    should be considered as a linear transformation of the posterior mean. 
     \end{remark}

    Economically, the free boundary $z=\delta(t)$ characterizes the investor's threshold for acquiring private information. At time $t$, if the posterior variance of $\mu$ is sufficiently large such that $Z_t<\delta(t)$, the investor optimally acquires the maximum possible amount of private information to rapidly reduce the uncertainty about the asset’s expected return. Conversely, if the posterior variance is relatively small and $Z_t\ge\delta(t)$, the marginal benefit of further information acquisition no longer justifies its cost, and the investor optimally refrains from observing the private signal $\widetilde{B}$. 
    %As a function of $\sigma,\sigma_0,\mu_0,\gamma$, it is obvious that the threshold $\delta$ is increasing in $\sigma_0$, decreasing in $\gamma$, not affected by $\mu_0$ and has no monotonic relationship with $\sigma$. These observations are consistent with the numerical result obtained in Section \ref{sec6}, where the economic interpretations will be elaborated on.
    
    After determining her optimal information acquisition strategy $\vartheta^*$, the investor allocates wealth in the risky asset based on the posterior mean of the asset’s expected return: $$\varpi^*_t=\frac{\sigma_0^2Z_t+1}{\sigma_0^2Z_t+\frac{\sigma_0^2}{\sigma^2}(T-t)+1}\cdot\frac{\mu_t}{\sigma^2\gamma}.$$
    The term $\frac{\mu_t}{\sigma^2\gamma}$ corresponds to the certainty equivalent of the classical Merton fraction, representing the optimal position of an investor who disregards future learning opportunities. The slope coefficient $$\frac{\sigma_0^2 Z_t+1}{\sigma_0^2 Z_t+\frac{\sigma_0^2}{\sigma^2}(T-t)+1}$$
    \begin{itemize}
    \item is equal to $1$ at terminal time, showing that certainty equivalence principle (see, e.g., \citet{gennotte1986}) holds when no additional information is available for active learning.
    \item is less than $1$ before terminal time, showing that uncertainty of drift estimation leads to much conservative investment decision. Moreover, the investor considers future expenses on information acquisition, thereby reducing her investment and reserving funds for future needs.
    \item is increasing in $Z_t\ge 0$. Because a larger value of $Z_t$ corresponds to a lower posterior variance, boosting the investor’s confidence in the drift estimate and motivating a larger investment in the risky asset. 
    \end{itemize}
    
    Finally, if $\delta(0)\le0$, the investor does not acquire any private information throughout the entire investment horizon. In this degenerate case, our result aligns with the findings of \citet[Section 4]{Bis2019}, where controllable information acquisition is not considered.

    \section{Convex information cost}\label{general}

     In this section, we maintain the setting with a CARA utility function and a normal prior distribution but extend the analysis to incorporate general convex information cost functions. Our goal is to obtain classical solutions to the HJB equation along with the associated optimal feedback controls. To this end, we impose the following assumptions:
    \begin{assumption}\label{ass:k:convex} 
        \begin{itemize}
        \item[(a)] $U$ and $\mu$ satisfy Conditions (a) and (b) of Assumption \ref{ass:TL}. 
         \item[(b)] $k: [0,+\infty)\to [0,+\infty)$ is twice continuously differentiable with $k''(x)>0$ for every $x\in (0,+\infty)$. Moreover, $k^\prime(0)\triangleq \underset{x\rightarrow 0+}{\lim}k'(x)=0,\;k^\prime(+\infty)\triangleq \underset{x\rightarrow +\infty}{\lim}k'(x)=+\infty$.   
         \end{itemize}
         \end{assumption}
    Under Assumption \ref{ass:k:convex}, $k$ is strictly increasing and strictly convex, exhibiting increasing marginal cost of information acquisition. In this case, the HJB equation \eqref{HJB} reduces to the following form:
    \begin{equation}\label{conjugate}
    V_t+V_xk^*\left(\left(\frac{V_z+\frac{1}{2}V_{yy}}{V_x}\right)^+\right)+\frac{1}{\sigma^2}\left(V_z+\frac{1}{2}V_{yy}\right)-\frac{V_{xy}^2}{2\sigma^2 V_{xx}}=0,
    \end{equation} 
     where $k^*:[0,+\infty)\rightarrow [0,+\infty)$ is the Legendre-Fenchel transform of $k$, also known as the conjugate function, given by $$k^*(y)=\underset{x\ge0}{\sup}\{xy-k(x)\},\quad y\in[0,+\infty).$$
     Note that $k^*$ is an increasing, non-negative and convex function, and Assumption \ref{ass:k:convex} ensures that $k^*\in C^2(0,+\infty)\cap C[0,+\infty)$.
     %($(k^*)'(y)=(k')^{-1}(y)$ for every $y>0$)
     By the variable separating structure of the model (CARA utility), we adopt the ansatz:
     \begin{equation*}
         V(t,x,y,z)=-\frac{1}{\gamma}e^{-\gamma x}\exp\{h(t,y,z)\}.
     \end{equation*} 
    We also conjecture that the solution satisfies $\frac{V_z+\frac{1}{2}V_{yy}}{V_x}\ge 0$ and will verify it later (In fact, this condition amounts to $\Gamma_u\le0$ in Lemma \ref{lma:Gamma:V}).
    %($\Gamma_u=h_z+\frac{1}{2}h_{yy}+\frac{1}{2}h_y^2=-\gamma\frac{V_z+\frac{1}{2}V_{yy}}{V_x}$, change of variable: $u=z+\frac{1}{\sigma^2}(T-t)$)
    Substituting this ansatz into the HJB equation yields the following PDE for $h$:
   \begin{equation}\label{eq:h:convex}
     \begin{cases}h_t+\frac{1}{\sigma^2}\left(h_z+\frac{1}{2}h_{yy}\right)+\widetilde{k}^*\left(h_z+\frac{1}{2}h_{yy}+\frac{1}{2}h_y^2\right)=0,\\
    h(T,y,z)=-\frac{1}{2}\log(z\sigma_0^2+1)+\frac{y^2\sigma_0^2+2y\mu_0-z\mu_0^2}{2(z\sigma_0^2+1)},
    \end{cases}
    \end{equation}
    where $\widetilde{k}^*(x)=-\gamma k^*(-\frac{x}{\gamma})$, $x\in (-\infty,0]$, is an increasing, non-positive, and concave function.
        
	 To proceed, consider the following standard form truncated piecewise-linear information cost function ($n\ge 1$ is a fixed integer): 
        $$k(x)=\begin{cases} k_1x, &x\in [0,\frac{1}{n}],\\
        k_i(x-\frac{i-1}{n})+\frac{1}{n}\sum_{j=1}^{i-1}k_j, & x\in[\frac{i-1}{n},\frac{i}{n}],\;i=2,3,\cdots,n^2,\\
		+\infty, &x\notin [0,n],\end{cases}$$ 
        where $\{k_i\}_{i=1}^{n^2}$ is a strictly increasing sequence of positive constants. 
        For this information cost, similar to \eqref{18}, the following equation for $h$ is obtained: 
        \begin{align*}
		\begin{cases}  h_t+\frac{1}{\sigma^2}(h_z+\frac{1}{2}h_{yy})=0, & h_z+\frac{1}{2}h_{yy}+\frac{1}{2}h_{y}^2 \in (-k_1\gamma,+\infty),\\
         h_t+(\frac{1}{\sigma^2}+\frac{i}{n})(h_z+\frac{1}{2}h_{yy})+\frac{i}{2n}h_{y}^2+\frac{k_1+ \cdots +k_i}{n}\gamma=0, & h_z+\frac{1}{2}h_{yy}+\frac{1}{2}h_{y}^2 \in(-k_{i+1}\gamma,-k_i\gamma], \\
	 h_t+(\frac{1}{\sigma^2}+n)(h_z+\frac{1}{2}h_{yy})+\frac{n}{2}h_{y}^2+\frac{k_1+ \cdots +k_{n^2}}{n}\gamma=0, & h_z+\frac{1}{2}h_{yy}+\frac{1}{2}h_{y}^2\in (-\infty,-k_{n^2}\gamma],\\
     h(T,y,z)=-\frac{1}{2}\log(z\sigma_0^2+1)+\frac{y^2\sigma_0^2+2y\mu_0-z\mu_0^2}{2(z\sigma_0^2+1)},\end{cases}
	\end{align*}
    %(supremum achieved at discrete points $\theta^2=\frac{i}{n}$).
    where $i=1,2,\cdots,n^2-1$. This constitutes a multi-phase free-boundary problem.
    
    Observe that the essential term $a-b$ in the application of Lemma \ref{characteristic_method} remains invariant across different phases. Therefore, based on the proof of Theorem \ref{value_function}, we anticipate that the solution retains the form:
    \begin{equation}\label{23}
    h(t,y,z)=P(t,z)+\frac{\mu_0y+\frac{1}{2}\sigma_0^2y^2}{\sigma_0^2z+\frac{\sigma_0^2}{\sigma^2}(T-t)+1}.
    \end{equation}	
    Because smooth convex functions can always be approximated by piecewise-linear ones, we conjecture that the same solution structure remains valid for general convex $k$ as well. 
    Substituting \eqref{23} into \eqref{eq:h:convex}, we derive the following equation for $P$:
    \begin{align*}\begin{cases}
          P_t(t,z)+\frac{1}{\sigma^2}P_z(t,z)+\frac{\sigma_0^2}{2\sigma^2}H(t,z)+\widetilde{k}^*\left(P_z(t,z)+\frac{\sigma_0^2}{2}H(t,z)+\frac{\mu_0^2}{2}H(t,z)^2\right)=0,\\
          P(T,z)=-\frac{1}{2}\log(\sigma_0^2z+1)-\frac{z\mu_0^2}{2(\sigma_0^2z+1)}.
          \end{cases}
    \end{align*}
    To further simplify, define a transformed function $$\Gamma(t,u)=P(t,u-\frac{1}{\sigma^2}(T-t))+\frac{1}{2}\log (\sigma_0^2u+1)-\frac{\mu_0^2}{2\sigma_0^2}\frac{1}{\sigma_0^2u+1}.$$
   The resulting PDE for $\Gamma$ becomes a first-order nonlinear Hamilton-Jacobi equation:\begin{equation}\label{29}\begin{cases}
        \Gamma_t(t,u)+\frac{\sigma_0^2}{2\sigma^2}\frac{1}{\sigma_0^2u+1}+\widetilde{k}^*(\Gamma_u(t,u))=0,\\
        \Gamma(T,u)=-\frac{\mu_0^2}{2\sigma_0^2}.
    \end{cases}\end{equation}
    % in which the Hamiltonian is given by $H(t,x,p)=-\frac{\sigma_0^2}{2\sigma^2}\frac{1}{\sigma_0^2x+1}-\widetilde{k}^*(p)$.
   The results above are summarized in the following lemma: 
    
    \begin{lemma}\label{lma:Gamma:V}
        Assume that $\Gamma(t,u)\in C^2([0,T)\times[0,+\infty))\cap C([0,T]\times[0,+\infty))$ is a classical solution to  \eqref{29} such that $\Gamma_u(t,u)\le 0$ holds for $(t,u)\in[0,T)\times[0,+\infty)$. Then the function
    \begin{equation*}
    \begin{split}
    V(t,x,y,z) =  -\frac{1}{\gamma}e^{-\gamma x}\sqrt{H(t,z)}\exp&\left\{
    \Gamma\left(t,z+\frac{1}{\sigma^2}(T-t)\right) + \frac{1}{2}H(t,z)\left(\sigma_0y+\frac{\mu_0}{\sigma_0}\right)^2
    \right\}
    \end{split}
    \end{equation*}
    belongs to $C^2([0,T)\times\mathbb{R}^2\times[0,+\infty))\cap C([0,T]\times\mathbb{R}^2\times[0,+\infty))$ and is a classical solution to the HJB equation \eqref{HJB}.
    \end{lemma}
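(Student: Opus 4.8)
The plan is to treat this as a verification lemma, running the reduction chain that led from \eqref{HJB} down to \eqref{29} in reverse. First I would dispose of regularity: since $H(t,z)=1/(\sigma_0^2 z+\tfrac{\sigma_0^2}{\sigma^2}(T-t)+1)$ is smooth and strictly positive on $[0,T]\times[0,+\infty)$ and $\Gamma$ has the stated regularity, the explicit formula shows directly that $V\in C^2([0,T)\times\mathbb{R}^2\times[0,+\infty))\cap C([0,T]\times\mathbb{R}^2\times[0,+\infty))$, with the PDE understood one-sidedly at the boundary $z=0$. The terminal condition $V(T,x,y,z)=F(y,z)U(x)$ follows by substituting $\Gamma(T,u)=-\mu_0^2/(2\sigma_0^2)$ and recognizing $F$ from \eqref{F:normal}. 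It then remains to verify that $V$ satisfies the PDE in \eqref{HJB}.

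Next I would exploit the exponential (CARA) structure $V=-\tfrac1\gamma e^{-\gamma x}e^{h}$ to record the algebraic identities $V_x=-\gamma V>0$, $V_{xx}=\gamma^2 V<0$, $V_{xy}=-\gamma V h_y$, and $V_z+\tfrac12 V_{yy}=V\big(h_z+\tfrac12 h_{yy}+\tfrac12 h_y^2\big)$, the last giving $\tfrac{V_z+\frac12 V_{yy}}{V_x}=-\tfrac1\gamma\big(h_z+\tfrac12 h_{yy}+\tfrac12 h_y^2\big)$. Since $V_{xx}<0$, the inner supremum over $\pi$ in \eqref{HJB} is attained at $\pi^*=-V_{xy}/(\sigma^2 V_{xx})$ and contributes $-V_{xy}^2/(2\sigma^2 V_{xx})$; since $V_x>0$, the supremum over $\theta^2\ge0$ of $\theta^2(V_z+\tfrac12 V_{yy})-k(\theta^2)V_x$ equals $V_x\,k^*\big((\tfrac{V_z+\frac12 V_{yy}}{V_x})^+\big)$ by definition of the conjugate. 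These two maximizations turn \eqref{HJB} into \eqref{conjugate}, and substituting the exponential ansatz reduces \eqref{conjugate} to the semilinear equation \eqref{eq:h:convex} for $h$.

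The main computational step, which I expect to be the heart of the argument, is to confirm that the separable ansatz \eqref{23} is \emph{exact}. Substituting \eqref{23} into \eqref{eq:h:convex} and collecting powers of $y$, the coefficients of $y$ and $y^2$ turn out to be proportional to $H_z+\sigma_0^2 H^2$ in the argument of $\widetilde{k}^*$ and to $H_t+\tfrac1{\sigma^2}H_z$ in the transport term $h_t+\tfrac1{\sigma^2}(h_z+\tfrac12 h_{yy})$. I would verify directly from the closed form of $H$ that both Riccati-type identities $H_z=-\sigma_0^2 H^2$ and $H_t+\tfrac1{\sigma^2}H_z=0$ hold, so that every $y$-dependent term cancels and \eqref{eq:h:convex} collapses to a scalar equation for $P$. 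A chain-rule computation under the change of variables $u=z+\tfrac1{\sigma^2}(T-t)$ defining $\Gamma$ then identifies the argument of $\widetilde{k}^*$ as exactly $\Gamma_u$ and reduces the $P$-equation to \eqref{29}, which holds by hypothesis.

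The hypothesis $\Gamma_u\le0$ is the conceptual crux tying the argument together. Combined with $\tfrac{V_z+\frac12 V_{yy}}{V_x}=-\tfrac1\gamma\Gamma_u\ge0$, it guarantees both that $\widetilde{k}^*$ is evaluated within its domain $(-\infty,0]$, legitimizing the reduction above, and that the positive part in \eqref{conjugate} is inactive, so that the supremum over $\theta$ in \eqref{HJB} is attained at the interior maximizer $\theta^*=\sqrt{(k')^{-1}(-\Gamma_u/\gamma)}$ rather than at the boundary $\theta=0$. Since the sign conditions $V_x>0$ and $V_{xx}<0$ (both immediate from $V<0$) make the two inner suprema attained and equal to the expressions in \eqref{conjugate}, and since $V$ solves \eqref{conjugate} and matches the terminal data, I would conclude that $V$ is a genuine classical solution of \eqref{HJB} on the stated domain.
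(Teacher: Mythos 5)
Your proposal is correct and takes essentially the same route as the paper, which establishes Lemma \ref{lma:Gamma:V} precisely via the reduction chain \eqref{HJB} $\to$ \eqref{conjugate} $\to$ \eqref{eq:h:convex} $\to$ \eqref{29} that you run in reverse: the same pointwise maximizations over $\pi$ and $\theta$ (valid since $V<0$ gives $V_x>0$, $V_{xx}<0$, and $\Gamma_u\le 0$ makes the positive part in \eqref{conjugate} harmless), the same Riccati identities $H_z=-\sigma_0^2H^2$ and $H_t+\frac{1}{\sigma^2}H_z=0$ cancelling all $y$-dependent terms in the quadratic ansatz \eqref{23}, and the same identification of the argument of $\widetilde{k}^*$ with $\Gamma_u$ under $u=z+\frac{1}{\sigma^2}(T-t)$. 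Your explicit verification of the terminal condition against \eqref{F:normal} and of the regularity/sign bookkeeping merely fills in details the paper leaves implicit when it says the lemma ``summarizes'' the preceding derivation.
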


    \begin{remark}\label{det_opti}
        The Hamilton-Jacobi equation \eqref{29} can be equivalently written as 
        \begin{equation}\label{trans_HJB}
        \begin{cases}
            {\Gamma}_t(t,u)+\frac{\sigma_0^2}{2\sigma^2}\frac{1}{\sigma_0^2u+1}+\underset{y>0}{\inf}\;\left\{y\cdot{\Gamma}_u(t,u)+\gamma k(y)\right\}=0,\\
            {\Gamma}(T,u)=-\frac{\mu_0^2}{2\sigma_0^2},
        \end{cases}
    \end{equation}
    %(Hamiltonian $H(t,u,y,-\Gamma_u)=y(-\gamma_u)-\gamma k(y)-\frac{\sigma_0^2}{2\sigma^2}\frac{1}{\sigma_0^2 u+1}$.)
    which corresponds to the dynamic programming equation of the following deterministic control problem:
        \begin{align}\label{inner}
        &\Gamma(t,u)=\inf_{\vartheta}\;\left\{ \int_t^T\gamma k(\vartheta_s^2)\d s+\int_t^T\frac{\sigma_0^2}{2\sigma^2}\frac{1}{\sigma_0^2u_s+1}\d s-\frac{\mu_0^2}{2\sigma_0^2}\right\}\\
        &\nonumber\text{subject to }
         \quad\d u_s=\vartheta_s^2\d s,\;u_t=u.
        \end{align}
    We will revisit this problem and explore its economic interpretation in Section \ref{eco_int}.
    %we will prove that the above deterministic optimal control problem actually amounts to maximizing $V(\vartheta)$ over all deterministic and bounded $\vartheta$, where $V(\vartheta)$ is the largest possible value of $\E[U(X_T)]$ if the investor fix her information acquisition strategy $\vartheta$ and choose from admissible trading strategies. %Therefore, suppose that $(\varpi^*,\vartheta^*)$ is the optimal realized information-and-trading strategy of problem \eqref{optimization_problem} and $\vartheta^*$ is non-random. Define $u_t=u_0+\int_0^t (\vartheta^*_s)^2\d s$, then $\vartheta^*$ must also solve  \eqref{inner} at $(t,u)=(0,u_0)$, and $u=\{u_t\}_{t\in [0,T]}$ is the corresponding state process.
    \end{remark}

    For now, we further assume that $k''(0)\triangleq\underset{x\rightarrow0+}{\lim}k^{\prime\prime}(x)>0$, which is equivalent to the regularity condition $\underset{x\rightarrow0+}{\lim}(k^*)^{\prime\prime}(x)<+\infty$.
    %($(k^*)''(y)=((k')^{-1})'(y)=\frac{1}{k''((k')^{-1}(y))}$, then take $y=0$.)
    %This further assumption is satisfied by $k(x)=x^{\delta}\;(1<\delta\le2)$ but not by $k(x)=x^{\delta}\;(\delta>2)$. 
    The irregular case $\underset{x\rightarrow0+}{\lim}k^{\prime\prime}(x)=0$ will be discussed later. 

    \begin{theorem}\label{general_characteristic}
        Under Assumption \ref{ass:k:convex} and the additional assumption $k''(0)>0$, the Hamilton-Jacobi equation \eqref{29} admits a unique classical solution $\Gamma(t,u)\in C^2([0,T)\times[0,+\infty))\cap C([0,T]\times[0,+\infty))$, which satisfies $\Gamma_u(t,u)\le 0$ for all $(t,u)\in[0,T)\times[0,+\infty)$.
    \end{theorem}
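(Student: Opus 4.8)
The plan is to solve \eqref{29} by the method of characteristics, exploiting its Hamiltonian structure, and then read off both the sign of $\Gamma_u$ and uniqueness from the characteristic flow. First I would write \eqref{29} as a first-order terminal-value problem $F(t,u,\Gamma_t,\Gamma_u)=0$ with $F(t,u,p,q)=p+\frac{\sigma_0^2}{2\sigma^2}(\sigma_0^2u+1)^{-1}+\widetilde{k}^*(q)$. Since $F$ is independent of $\Gamma$ and of $t$, the momentum $p=\Gamma_t$ is constant along each characteristic, and the planar subsystem in $(u,q)=(u,\Gamma_u)$ is the Hamiltonian flow $\dot u=\mathcal{H}_q=(\widetilde{k}^*)'(q)$, $\dot q=-\mathcal{H}_u=\frac{\sigma_0^4}{2\sigma^2(\sigma_0^2u+1)^2}$, with $\mathcal{H}(u,q)=\frac{\sigma_0^2}{2\sigma^2}(\sigma_0^2u+1)^{-1}+\widetilde{k}^*(q)$ conserved. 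Because the terminal datum $\Gamma(T,\cdot)$ is constant, every characteristic starts at $t=T$ with $q=0$, parametrized by its foot point $u_0\ge0$, and $p$ is then recovered from $F=0$ as $p=-\frac{\sigma_0^2}{2\sigma^2}(\sigma_0^2u_0+1)^{-1}$.

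The regularity hypothesis $k''(0)>0$ is what makes this work near the terminal manifold: it is equivalent to $(k^*)''(0)<+\infty$, so $\widetilde{k}^*\in C^2$ up to $q=0$ and the characteristic vector field is locally Lipschitz there, despite the degeneracy $\dot u=(\widetilde{k}^*)'(0)=0$ at $t=T$. Integrating backward in $\tau=T-t$, the strict inequality $\dot q>0$ together with $q|_{t=T}=0$ gives $q(t)<0$ for $t<T$, keeping $q$ inside the domain $(-\infty,0]$ of $\widetilde{k}^*$; and since $\dot u=(\widetilde{k}^*)'(q)\ge0$ the $u$-coordinate is nondecreasing in $t$, so a characteristic issuing from $u_0\ge0$ stays in $[0,u_0]$ on $[t,T]$ and never leaves $u\ge0$ toward the future. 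I would then confirm global existence of the flow on all of $[0,T]$ using the boundedness of the vector field on $u\ge0$.

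The crux is to show these characteristics foliate $[0,T]\times[0,+\infty)$ without crossing, so that $\Gamma$ is single-valued and $C^2$. For this I would study the variational (Jacobi) equations for $J=\partial u/\partial u_0$ and $K=\partial q/\partial u_0$, namely $\dot J=(\widetilde{k}^*)''(q)K$ and $\dot K=-\frac{\sigma_0^6}{\sigma^2(\sigma_0^2u+1)^3}J$, with $J|_{t=T}=1$, $K|_{t=T}=0$. Rewriting in backward time and using $(\widetilde{k}^*)''\le0$ together with $\frac{\sigma_0^6}{\sigma^2(\sigma_0^2u+1)^3}>0$, a sign-propagation argument shows $J,K>0$ for all $t<T$; in particular $J>0$, so the foot-point map $u_0\mapsto u(t;u_0)$ is strictly increasing for each fixed $t$. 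Combined with the limiting behavior $u(t;u_0)\to+\infty$ as $u_0\to+\infty$ and $u(t;u_0)<0$ for $u_0$ near $0$ when $t<T$, this yields a $C^2$-diffeomorphism from foot points onto the domain, and hence a well-defined classical solution obtained by integrating $\dot{\Gamma}=p+q(\widetilde{k}^*)'(q)$ along characteristics. The finiteness of $(\widetilde{k}^*)''$ at $q=0$ (again $k''(0)>0$) is exactly what makes the Jacobi system regular at the terminal manifold.

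Finally, since along the characteristic through $(t,u)$ one has $\Gamma_u(t,u)=q\le0$ by the second step, the sign condition $\Gamma_u\le0$ follows immediately; alternatively it is transparent from the control representation in Remark~\ref{det_opti}, as the running cost $\frac{\sigma_0^2}{2\sigma^2}(\sigma_0^2u_s+1)^{-1}$ is decreasing in $u$ while the dynamics $\dot u_s=\vartheta_s^2$ are monotone in the initial state. Uniqueness among classical solutions follows because any $C^2$ solution is constant in momentum along the same characteristic curves and agrees with the terminal datum, so it must coincide with the constructed $\Gamma$; the value functional of \eqref{inner} provides an independent confirmation. I expect the main obstacle to be the third step—proving the global non-crossing and surjectivity of the characteristic foliation uniformly up to the terminal manifold, where the flow degenerates ($\dot u=0$ at $t=T$). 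Controlling the Jacobi system there is precisely where the assumption $k''(0)>0$ is indispensable, which is also the reason the irregular case $k''(0)=0$ must be handled separately.
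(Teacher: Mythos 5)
Your proposal is correct and follows essentially the same route as the paper's Appendix~\ref{GC_proof}: a (time-reversed) method of characteristics for \eqref{29}, with the conserved Hamiltonian (your $\mathcal{H}(u,q)$ is exactly the paper's identity $f(s)\equiv 0$, which the paper further exploits to decouple Hamilton's ODE into the scalar equation \eqref{32}), sign propagation of $q$ giving $\Gamma_u\le 0$, non-crossing of characteristics via monotonicity of the foot-point map (your Jacobi fields $J,K$ reproduce the paper's computation $\partial u^{u_0}(t)/\partial u_0\ge 1$ and its comparison-principle argument), intermediate-value surjectivity, invertibility of the characteristic flow, and uniqueness via the derivation of the characteristic system. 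The only deviations are presentational glosses that do not change the argument: the foot-point map is a $C^1$ (not $C^2$) diffeomorphism, which still yields $\Gamma\in C^2$ since $\Gamma_t,\Gamma_u$ are compositions of $C^1$ maps; your surjectivity sketch elides the paper's careful treatment of finite maximal existence times $t_0(u_0)$ near small $u_0$; and the verification that the function built along characteristics actually solves the PDE (the paper's identity \eqref{36}, with the justification for interchanging mixed partials of a merely $C^1$ flow) is asserted rather than proved.
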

    \begin{proof}
        The proof essentially follows the standard method of characteristics and is deferred to Appendix \ref{GC_proof}. The regularity condition $k''(0)>0$ is crucial, as it ensures the twice continuous differentiability of $k^*$ at $x=0$, which in turn guarantees the continuous differentiability of the solution to the associated Hamilton's ODE \eqref{31} with respect to the initial data. \end{proof}

    \begin{theorem}\label{verification_2}
        Under Assumption \ref{ass:k:convex} and the additional assumption $k''(0)>0$, let $\Gamma$ be given by Theorem \ref{general_characteristic}. Then the following ODE
        \begin{align}\label{41}
            \d Z_t=\left(\frac{1}{\sigma^2}+(k')^{-1}\left(-\frac{1}{\gamma}\Gamma_u(t,Z_t+\frac{1}{\sigma^2}(T-t))\right)\right)\d t, \qquad Z(0)=0,
        \end{align} 
        admits a unique solution on $[0,T]$. Let 
        \begin{eqnarray}
            &&(\vartheta^*_t)^2=(k')^{-1}\left(-\frac{1}{\gamma}\Gamma_u(t,Z_t+\frac{1}{\sigma^2}(T-t))\right),  \label{42} \\
            &&\nonumber Y_t=\frac{1}{\sigma}\widetilde{W}_t+\int_0^t\vartheta^*_s\d \widetilde{B}_s,
        \end{eqnarray}
    where $\widetilde{B}$ is the signal process generated by $\vartheta^*$, and 
        \begin{equation}\label{43}
            \varpi^*_t=\frac{1}{\sigma^2\gamma}\frac{\mu_0+\sigma_0^2Y_t}{\sigma_0^2Z_t+\frac{\sigma_0^2}{\sigma^2}(T-t)+1}.
        \end{equation}
         Then $(\vartheta^*,\varpi^*)$ is the unique optimal realized information-and-trading strategy. The corresponding $(\widehat{\theta},\widehat{\pi})$ that generates $(\vartheta^*,\varpi^*)$ is optimal to Problem \eqref{optimization_problem}. In particular, $\vartheta^*$ is non-random.
        \end{theorem}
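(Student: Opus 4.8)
The plan is to prove Theorem \ref{verification_2} as a verification theorem erected on the classical solution $V$ produced by Lemma \ref{lma:Gamma:V}. The argument splits into five parts: (i) solve the deterministic ODE \eqref{41} to construct the candidate trajectory and feedback controls; (ii) check that $(\vartheta^*,\varpi^*)$ is admissible; (iii) run the It\^o--HJB verification on the auxiliary space to identify $V$ with the value function of the relaxed Problem \eqref{Q_value_function} and to show $(\vartheta^*,\varpi^*)$ attains it; (iv) extract uniqueness from strict concavity in $x$ and strict convexity of $k$; and (v) transfer optimality to the original Problem \eqref{optimization_problem} through Theorem \ref{recovery}.

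First I would establish well-posedness of \eqref{41}. Since $\Gamma\in C^2$ by Theorem \ref{general_characteristic}, the map $u\mapsto\Gamma_u(t,u)$ is $C^1$, and the assumption $k''(0)>0$ renders $(k')^{-1}$ continuously differentiable near $[0,+\infty)$; hence the right-hand side of \eqref{41} is locally Lipschitz in $Z_t$, and Picard--Lindel\"of yields a unique local solution. A uniform bound on $\Gamma_u$, available from the deterministic-control representation of $\Gamma$ in Remark \ref{det_opti} (whose running cost $\frac{\sigma_0^2}{2\sigma^2}\frac{1}{\sigma_0^2 u+1}$ is bounded and decreasing), keeps the drift $\frac{1}{\sigma^2}+(k')^{-1}(-\tfrac1\gamma\Gamma_u)$ bounded, so the solution extends to all of $[0,T]$. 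Because $Z$, and therefore $\vartheta^*$ via \eqref{42}, solve purely deterministic equations, $\vartheta^*$ is non-random (the final assertion of the theorem) and bounded, satisfying Definition \ref{theta}(c).

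The core is the verification step. For an arbitrary $(\widehat\vartheta,\widehat\varpi)\in\widehat{\mathcal A}$, I would apply It\^o's formula to $s\mapsto V(s,\widehat X_s,\widehat Y_s,\widehat Z_s)$ on $[t,T]$. Its finite-variation part equals the HJB operator of \eqref{HJB} evaluated at the running control, which is nonpositive by Lemma \ref{lma:Gamma:V} and vanishes exactly at the pointwise maximizers $\pi=-V_{xy}/(\sigma^2V_{xx})$ and $\theta^2=(k')^{-1}\big((V_z+\tfrac12V_{yy})/V_x\big)$; using $V=-\tfrac1\gamma e^{-\gamma x}e^{h}$ and the change of variables defining $\Gamma$, these reduce precisely to \eqref{43} and \eqref{42}. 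Taking conditional expectations and invoking $V(T,\cdot)=F(\cdot)U(\cdot)$ then gives $V\ge \E^{\widehat\P}[F(\widehat Y_T,\widehat Z_T)U(\widehat X_T)\mid \widehat X_t=x,\widehat Y_t=y,\widehat Z_t=z]$ for every admissible control, with equality for $(\vartheta^*,\varpi^*)$ --- \emph{provided} the local-martingale parts have zero conditional expectation. Justifying this last point is the main obstacle: since $\vartheta^*$ is deterministic, $\widehat Y$ is a Gaussian process while $V$ is exponential--quadratic in $y$, so passing to the localization limit demands Gaussian moment estimates guaranteeing integrability of the relevant exponentials (this is where the sub-Gaussian prior on $\mu$ and the decay built into $H$ enter), together with the observation that $\varpi^*$ is a bounded-coefficient affine functional of $\widetilde W$ and $\int_0^\cdot\vartheta^*_s\,\d\widetilde B_s$, which furnishes the growth bound of Definition \ref{pi}(b) with $f$ deterministic and $g=\vartheta^*$.

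Finally, strict concavity $V_{xx}<0$ and strict convexity of $k$ make the pointwise HJB maximizers unique, so equality in the verification inequality forces any optimal realized control to coincide $\d t\otimes\d\widehat\P$-a.e.\ with $(\vartheta^*,\varpi^*)$, yielding uniqueness of the optimal realized strategy. As $\vartheta^*$ is deterministic and $\varpi^*$ is an explicit progressively measurable functional of the observed paths, $(\vartheta^*,\varpi^*)$ is the realization of some admissible $(\widehat\theta,\widehat\pi)\in\mathcal A$, and Theorem \ref{recovery} then promotes optimality in the relaxed Problem \eqref{Q_value_function} to optimality in the original Problem \eqref{optimization_problem}, completing the proof.
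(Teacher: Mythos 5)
Your skeleton---Picard--Lindel\"of plus an a priori bound for the ODE \eqref{41}, identification of the feedback maximizers, It\^o--HJB verification, uniqueness from strict concavity in $\pi$ and strict convexity of $k$, and recovery via Theorem \ref{recovery}---coincides with the paper's, and your derivation of the global-in-time bound from the deterministic-control representation in Remark \ref{det_opti} is a legitimate substitute for the paper's characteristic-system estimate \eqref{partial_derivative_estimation}. The genuine gap is in the verification step, which is exactly where the paper's real work lies. You apply It\^o directly to $V(s,\widehat X_s,\widehat Y_s,\widehat Z_s)$ and propose to dispose of the local-martingale terms in the localization limit via ``Gaussian moment estimates, since $\vartheta^*$ is deterministic, $\widehat Y$ is a Gaussian process.'' That reasoning can at best cover the equality half along the candidate control. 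The inequality half must hold for \emph{every} $(\widehat\vartheta,\widehat\varpi)\in\widehat{\mathcal A}$: there $\widehat\vartheta$ is an arbitrary bounded progressively measurable process and $\widehat\varpi$ is dominated only by running suprema of stochastic integrals, so neither $\widehat Y$ nor $\widehat X$ is Gaussian. Worse, because the CARA value function is unbounded below, Fatou's lemma runs the wrong way: a local supermartingale bounded above by $0$ need not be a true supermartingale, so without uniform integrability of $e^{-\gamma \widehat X}e^{h}$ along the localizing sequence you cannot pass from $\E^{\widehat\P}[V(T\wedge\tau_n,\cdot)]\le V(t,x,y,z)$ to the desired inequality at $T$. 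Your appeal to the sub-Gaussian prior is also a red herring here---under Assumption \ref{ass:k:convex} the prior is normal and $F$ is explicit; the delicate object is $\exp\{-\gamma\int\widehat\varpi_s\sigma\,\d\widehat W_s\}$ with $\widehat\varpi$ itself of stochastic-integral size.

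The paper's proof of Theorem \ref{verification_2} defers precisely this point to Appendix \ref{verif}: instead of localizing $V$, one multiplies by $\zeta_t^{-1}$, where $\zeta$ is the exponential local martingale whose integrands $\phi_t,\varphi_t$ are chosen to cancel the martingale part of $V$ identically, making $V(t,\cdot)\zeta_t^{-1}$ pathwise nonincreasing with drift equal to the HJB expression. The entire integrability burden is thereby reduced to showing $\E^{\widehat\P}[\zeta_T]=1$ for \emph{all} admissible controls, which is established by verifying Novikov's condition on sufficiently short subintervals (with a constant independent of the subinterval) via H\"older's inequality, a reflection-principle bound for $\sup_{0\le t\le T}\widehat W_t^2$, a time-change bound for $\sup_{0\le t\le T}\bigl(\int_0^t\widehat\vartheta_s\,\d\widehat B_s\bigr)^2$, and \citet[Corollary 3.5.14]{Karatzas1991}. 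If you insist on the direct-It\^o route, you would need exponential moment bounds of exactly this type, for arbitrary admissible controls, to secure uniform integrability---at which point you have rebuilt the paper's argument. As written, your plan for what you correctly identify as ``the main obstacle'' would fail.
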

        
\begin{proof} See Appendix \ref{verif_proof}. \end{proof}
    
    We now turn to the irregular case $k''(0)=0$. In this case we cannot expect to obtain a $C^2$-classical solution to Hamilton-Jacobi equation \eqref{29} in general. Nevertheless, an optimal information-and-trading strategy can still be obtained by introducing a regularization term to the original information cost function: $$k^{(b)}(x)\triangleq k(x)+bx^2\quad (b>0).$$
    Now $k^{(b)}$ satisfies $\underset{x\rightarrow0+}{\lim}(k^{(b)})^{\prime\prime}(x)=b>0$ and Theorem \ref{verification_2} applies to $k^{(b)}$. Let $(\vartheta^{(b)},\varpi^{(b)})$ be the optimal information-and-trading strategy for cost function $k^{(b)}$.
    
    \begin{theorem}\label{limit}
        The limiting function $\vartheta^{(0)}_t\triangleq \underset{b\rightarrow 0}{\lim}\vartheta^{(b)}_t$ is well-defined and continuous
        %ly differentiable 
        for $t\in[0,T]$. Set
        $$Y^{(0)}_t=\frac{1}{\sigma}\widetilde{W}_t+\int_0^t \vartheta^{(0)}_s \d\widetilde{B}^{(0)}_s,\qquad Z^{(0)}_t=\frac{t}{\sigma^2}+\int_0^t (\vartheta^{(0)}_s)^2 \d s,$$
        where $\widetilde{B}^{(0)}$ is the signal process generated by information acquisition strategy $\vartheta^{(0)}$. Define $\varpi^{(0)}$ by \eqref{43} with $Y,Z$ replaced by $Y^{(0)},Z^{(0)}$, then $(\varpi^{(0)},\vartheta^{(0)})$ is the optimal realized information-and-trading strategy for information cost function $k$. 
    \end{theorem}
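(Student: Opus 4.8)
The plan is to carry out the entire limiting argument at the level of the deterministic control problem \eqref{inner} identified in Remark \ref{det_opti}, rather than at the level of the PDE, since for a Gaussian prior the optimal information acquisition is deterministic and coincides with the minimizing trajectory of \eqref{inner}. Write $\Gamma^{(b)}$ for the value function of \eqref{inner} with $k$ replaced by $k^{(b)}=k+b(\cdot)^2$. The first step is the \emph{monotone convergence} $\Gamma^{(b)}\downarrow\Gamma$ as $b\downarrow 0$, where $\Gamma$ is the value associated with $k$: the integrand $\gamma k^{(b)}(\vartheta_s^2)$ decreases pointwise to $\gamma k(\vartheta_s^2)$, so $\Gamma^{(b)}\ge\Gamma$, while testing a fixed near-optimal control for $k$ gives $\limsup_b\Gamma^{(b)}\le\Gamma$ by dominated convergence (admissible controls are bounded). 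Through the representation in Lemma \ref{lma:Gamma:V} this yields $V^{(b)}\to V$ for the corresponding value functions, with $V$ built from the limiting $\Gamma$ even though $\Gamma$ need not be $C^2$.

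The second step establishes that $\vartheta^{(0)}_t=\lim_{b\to0}\vartheta^{(b)}_t$ is well-defined and continuous. The crux is a \emph{uniform-in-$b$} bound on the feedback. Along the optimal trajectory of \eqref{inner} the costate $p^{(b)}(s)=\Gamma^{(b)}_u(s,u^{(b)}(s))$ solves the adjoint ODE $\dot p=\frac{\sigma_0^4}{2\sigma^2(\sigma_0^2u+1)^2}$ with $p^{(b)}(T)=0$ (as $\Gamma^{(b)}(T,\cdot)$ is constant), so $p^{(b)}(s)\in[-\frac{\sigma_0^4 T}{2\sigma^2},0]$ uniformly in $b$. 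Since $(k^{(b)})'\ge k'$, the optimality condition gives $(\vartheta^{(b)}_s)^2=((k^{(b)})')^{-1}(-p^{(b)}(s)/\gamma)\le (k')^{-1}(\frac{\sigma_0^4 T}{2\sigma^2\gamma})$, so the controls are uniformly bounded. By compactness every sequence $b_n\downarrow0$ has a subsequence along which $(\vartheta^{(b_n)})^2$ converges; by $\Gamma$-convergence of the functionals and lower semicontinuity the limit minimizes \eqref{inner} for $k$, and strict convexity of $k$ forces uniqueness of this minimizer, hence convergence of the whole family. Continuity of $\vartheta^{(0)}$ follows from Pontryagin's principle for $k$: $p(s)$ is continuous and $(\vartheta^{(0)}_s)^2=(k')^{-1}((-p(s)/\gamma)^+)$ with $(k')^{-1}$ continuous. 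This step never invokes $C^2$ regularity of $\Gamma$, which is precisely why it survives the irregular case $k''(0)=0$.

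The third step proves optimality for the stochastic problem by a sandwich argument in the auxiliary space of \eqref{Q_value_function}, where $\widehat W,\widehat B$ are fixed. Since $k^{(b)}\ge k$, any fixed strategy yields higher terminal wealth under $k$, so $J(\vartheta^{(b)},\varpi^{(b)})\ge J^{(b)}(\vartheta^{(b)},\varpi^{(b)})=V^{(b)}$ and $J\le V$ always. Because $\vartheta^{(b)}$ is deterministic and uniformly bounded, $\widehat Z^{(b)}\to\widehat Z^{(0)}$ by ODE stability and $\widehat Y^{(b)}_t=\frac{1}{\sigma}\widehat W_t+\int_0^t\vartheta^{(b)}_s\,\d\widehat B_s\to\widehat Y^{(0)}_t$ in $L^2$ by the It\^o isometry and dominated convergence, whence $\varpi^{(b)}\to\varpi^{(0)}$ via \eqref{43}. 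Using the CARA form of $U$ with the uniform moment bounds from admissibility, the objective $\E^{\widehat\P}[F(\widehat Y_T,\widehat Z_T)U(\widehat X_T)]$ is continuous along this convergence, so $J(\vartheta^{(0)},\varpi^{(0)})=\lim_b J(\vartheta^{(b)},\varpi^{(b)})$; combined with $V\ge J(\vartheta^{(b)},\varpi^{(b)})\ge V^{(b)}\to V$ this gives $J(\vartheta^{(0)},\varpi^{(0)})=V$. A routine check that $(\vartheta^{(0)},\varpi^{(0)})$ is admissible together with Theorem \ref{recovery} then recovers the optimal functional strategy for \eqref{optimization_problem}.

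I expect the \emph{main obstacle} to be exactly this passage to the limit in the controls under the loss of $C^2$ regularity of $\Gamma$: since Lemma \ref{lma:Gamma:V} and the feedback \eqref{42} presuppose a classical solution, optimality of $(\vartheta^{(0)},\varpi^{(0)})$ cannot be read off a verification theorem and must be extracted from the uniform-in-$b$ costate estimates and the strict-convexity uniqueness above. The most delicate points are securing uniform integrability for continuity of the objective (uniformly in $b$, from the admissibility bounds and CARA utility) and controlling the regularization near times where $\vartheta^{(0)}_t=0$, at which $(k')^{-1}$ has infinite slope.
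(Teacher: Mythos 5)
Your proposal follows the same overall architecture as the paper's proof in Appendix \ref{limit_proof}: reduce everything to the deterministic problem \eqref{inner_problem2} via Remark \ref{det_opti} and Section \ref{eco_int}, obtain uniform-in-$b$ bounds on the regularized controls from the costate estimate (your adjoint bound $p^{(b)}(s)\in[-\tfrac{\sigma_0^4T}{2\sigma^2},0]$ is exactly the paper's derivative estimate \eqref{partial_derivative_estimation}), extract a limit by compactness, and upgrade subsequential to full-family convergence via strict convexity of \eqref{inner_problem2}. One step differs only in execution: for compactness the paper uses the Euler--Lagrange equation \eqref{Euler} to bound $\frac{\d(\vartheta^{(b)}_t)^2}{\d t}$ uniformly on compact subsets of $[0,T)$ and applies Ascoli--Arzel\`a, whereas you use weak compactness plus $\Gamma$-convergence of the functionals and then recover pointwise convergence and continuity from the Pontryagin representation $(\vartheta^{(0)}_s)^2=(k')^{-1}(-p(s)/\gamma)$. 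This is a viable variant; moreover, the obstacle you flag at times where $\vartheta^{(0)}_t=0$ is harmless, since only continuity of $(k')^{-1}$ (not differentiability) is used, and $(k')^{-1}$ is continuous at $0$ whenever $k'$ is strictly increasing with $k'(0)=0$.

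The genuine issue is in your final optimality-transfer step, which is both the place where you deviate from the paper and where a gap remains. First, a circularity: you define the limiting value ``$V$'' through the representation of Lemma \ref{lma:Gamma:V} built from the limiting $\Gamma$, but for irregular $k$ there is no verification theorem, so identifying that expression with the true stochastic value function is precisely part of what must be proved; your sandwich $V\ge J(\vartheta^{(b)},\varpi^{(b)})\ge V^{(b)}\to V$ silently uses both meanings of $V$. This is repairable in one line at the stochastic level: writing $J^{(b)}$ and $J$ for the objectives under $k^{(b)}$ and $k$, for each fixed admissible strategy the extra cost $b\int_0^T\vartheta_t^4\,\d t$ is bounded, so $J^{(b)}(\vartheta,\varpi)\to J(\vartheta,\varpi)$ by dominated convergence with a bounded multiplier (CARA), whence $\liminf_b V^{(b)}\ge V$, while $V^{(b)}\le V$ is trivial from $k^{(b)}\ge k$. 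Second, and more substantively, your conclusion $J(\vartheta^{(0)},\varpi^{(0)})=\lim_b J(\vartheta^{(b)},\varpi^{(b)})$ requires uniform integrability of exponential-quadratic functionals along the convergence $(\vartheta^{(b)},\varpi^{(b)})\to(\vartheta^{(0)},\varpi^{(0)})$; you flag this but never supply it, and it is a nontrivial estimate of the Novikov type carried out in Appendix \ref{verif}. The paper sidesteps it entirely: working under $\P$ with the CARA form, the deterministic information costs factor out of the expectation as constants, so its comparison chain needs only (i) dominated convergence with a bounded multiplier, (ii) optimality of $(\vartheta^{(b_n)},\varpi^{(b_n)})$ under $k^{(b_n)}$, and (iii) the fact from Section \ref{eco_int} that $\varpi^{(0)}$ is the explicit optimal response to the deterministic strategy $\vartheta^{(0)}$, with the optimal-response value depending continuously on the trajectory $u$. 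If you replace your continuity-of-$J$ step by that chain (or supply the missing uniform-integrability estimate), your argument closes and yields the theorem, including the well-definedness of the full limit by the same strict-convexity uniqueness argument the paper uses for \eqref{inner_problem}.
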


\begin{proof}  See Appendix \ref{limit_proof}.  \end{proof}
   \vskip 4pt 
    Note that this proof relies on a key fact established in Section \ref{eco_int}: If the investor fixes an arbitrary but \emph{non-random} information acquisition strategy $\vartheta^*$ and optimizes only over trading strategies $\varpi$ to maximize her expected utility of terminal wealth, then her optimal response is actually given by \eqref{43}. 

    \section{Numerical results: properties of optimal information acquisition}\label{sec6}

    In this section, we conduct numerical experiments to obtain approximation of the optimal information acquisition process. Assume that the cost function is given by $k(x)=cx^2$. Our analysis focuses on the qualitative properties of $\vartheta^*$and examines its sensitivity to key model parameters. Unless otherwise stated, the parameter values are set to the maximum likelihood estimates reported by \citet{Guan2025}, namely $c=0.002, \mu_0=0.172, \sigma_0=0.121, \sigma=0.192, \gamma=2, T=1$.
    
     We have mentioned that the Hamilton-Jacobi equation \eqref{29} can be expressed equivalently as a Hamilton-Jacobi-Bellman equation \eqref{trans_HJB}, for which various numerical finite difference schemes are well established. Let $h>0$ denote the spatial step size and $\tau>0$ the temporal step size. We adopt the standard upwind scheme: $$\frac{1}{\tau}\left[\Gamma(t_{n+1},u_j)-\Gamma(t_n,u_j)\right]+\frac{\sigma_0^2}{2\sigma^2}\frac{1}{\sigma_0^2u_j+1}+\widetilde{k}^*\left(\frac{1}{h}[\Gamma(t_n,u_{j+1})-\Gamma(t_n,u_j)]\right)=0,$$
     which has been shown to be both convergent and stable under mild conditions; see, e.g., \citet[Chapter 8]{Fleming2006} or \citet{Sun2015} and references therein. While most of those conditions are inherently satisfied by our model, the ratio $\tau/h$ must be carefully chosen to satisfy the CFL condition (Courant-Friedrichs-Lewy). Specifically, our analysis in the proof of Theorem \ref{general_characteristic} shows that the slope of the characteristic curves satisfies $$0\le \left|\frac{\Delta u}{\Delta t}\right|\le (k')^{-1}(\frac{\sigma_0^4 T}{2\sigma^2\gamma})$$ for $(t,u)\in [0,T]\times [0,+\infty)$. Thus, a sufficient condition for the CFL condition is $$\frac{h}{\tau}\ge (k')^{-1}(\frac{\sigma_0^4 T}{2\sigma^2\gamma}).$$
    
     Before presenting the numerical results, we first state several analytical properties of $\vartheta^*$, derived directly from the structure of the optimization problem:
    \begin{theorem}\label{property}
     Under Assumption \ref{ass:k:convex}, the optimal realized information acquisition strategy $\vartheta^*$ has the following properties:
\begin{itemize}
        \item[(a)] $\vartheta^*_t\neq 0$ for all $t\in [0,T)$ and $\vartheta_T^*=\underset{t\rightarrow T}{\lim}\vartheta_t^*=0.$

        \item[(b)] $\vartheta^*_t$ is a decreasing function of $t$.

        \item[(c)] $\vartheta^*$ does not depend on the specification of prior expectation $\mu_0$.
        \end{itemize}
    \end{theorem}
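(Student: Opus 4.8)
\textbf{Proof proposal for Theorem \ref{property}.}

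The plan is to extract all three properties directly from the characterization of $\vartheta^*$ in Theorem \ref{verification_2}, namely from the relation
$(\vartheta^*_t)^2=(k')^{-1}\!\left(-\tfrac{1}{\gamma}\Gamma_u(t,Z_t+\tfrac{1}{\sigma^2}(T-t))\right)$,
together with the structure of the Hamilton-Jacobi equation \eqref{29} and its deterministic control reformulation \eqref{inner}. Since $(k')^{-1}$ is continuous and strictly increasing (by Assumption \ref{ass:k:convex}, $k'$ is a continuous strictly increasing bijection from $[0,+\infty)$ onto $[0,+\infty)$, with $(k')^{-1}(0)=0$), each qualitative property of $\vartheta^*$ will be inherited from the corresponding monotonicity or positivity property of the quantity $-\Gamma_u(t,Z_t+\tfrac{1}{\sigma^2}(T-t))$. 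Thus the heart of the argument is an analysis of the sign and monotonicity of $\Gamma_u$ along the state trajectory.

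For part (a), I would argue that $\Gamma_u(t,u)<0$ strictly for $t<T$ and $\Gamma_u(T,u)=0$. The terminal condition $\Gamma(T,u)\equiv -\tfrac{\mu_0^2}{2\sigma_0^2}$ is constant in $u$, giving $\Gamma_u(T,\cdot)=0$ and hence $\vartheta^*_T=(k')^{-1}(0)=0$; continuity of the map yields $\lim_{t\to T}\vartheta^*_t=0$. For strict negativity at earlier times, I would appeal to the control representation \eqref{inner}: the running payoff $\tfrac{\sigma_0^2}{2\sigma^2}\,\tfrac{1}{\sigma_0^2 u_s+1}$ is strictly decreasing in $u$, so a marginal increase in the initial state $u$ strictly lowers the cost-to-go over any nondegenerate horizon $[t,T)$, forcing $\Gamma_u(t,u)<0$ for $t<T$. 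This gives $(\vartheta^*_t)^2>0$, i.e. $\vartheta^*_t\neq 0$ on $[0,T)$.

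For part (b), monotonicity in $t$ should follow by combining two facts: that $Z_t+\tfrac{1}{\sigma^2}(T-t)$ is monotone along the optimal trajectory, and that $-\Gamma_u$ is monotone in its arguments. I would verify from \eqref{41} that $Z_t+\tfrac{1}{\sigma^2}(T-t)$ is nondecreasing in $t$ (its derivative is $(\vartheta^*_t)^2\ge 0$), and then show that $u\mapsto\Gamma_u(t,u)$ is increasing (equivalently $\Gamma_{uu}\ge 0$, a convexity-in-state property of the value function of the convex control problem \eqref{inner}), so that $-\Gamma_u$ decreases as $u$ grows; the $t$-dependence of $\Gamma_u$ must then be shown to push in the same direction. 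Applying $(k')^{-1}$, which preserves monotonicity, yields that $\vartheta^*_t$ is decreasing. For part (c), I would observe that the Hamilton-Jacobi equation \eqref{29} and its terminal data depend on $\mu_0$ only through the additive constant $-\tfrac{\mu_0^2}{2\sigma_0^2}$, which also appears (up to a $u$-independent shift) in the definition of $\Gamma$; since constants do not affect $\Gamma_u$, the quantity driving $\vartheta^*$ is independent of $\mu_0$, and so is $Z$ via \eqref{41}, whence $\vartheta^*$ does not depend on $\mu_0$.

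The main obstacle I anticipate is part (b): establishing the monotonicity of $\Gamma_u$ rigorously, because it requires controlling both the spatial convexity $\Gamma_{uu}\ge 0$ and the temporal behaviour of $\Gamma_u$ simultaneously, and the two effects interact through the nonlinear coupling in \eqref{29}. The cleanest route is probably to differentiate the Hamilton-Jacobi equation in $u$ to obtain a transport equation for $w:=\Gamma_u$, namely $w_t-\tfrac{\sigma_0^4}{2\sigma^2}\tfrac{1}{(\sigma_0^2 u+1)^2}+\widetilde{k}^{*\prime}(w)\,w_u=0$, and then run a comparison/characteristics argument (using the explicit characteristic analysis already invoked in the proof of Theorem \ref{general_characteristic}) to sign $w_u$ and track $w$ along the optimal path; the regularity $k''(0)>0$ guaranteeing smoothness of characteristics is what makes this transport analysis legitimate, and the $k''(0)=0$ case would be handled by the regularization-and-limit passage of Theorem \ref{limit}.
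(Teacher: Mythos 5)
Your parts (a) and (c) are essentially correct and close in spirit to the paper. For (c) you reproduce the paper's reasoning exactly: Hamilton's ODE \eqref{31} contains no $\mu_0$, so the characteristic curves and the spatial derivative $\Gamma_u$ are unaffected by $\mu_0$, and $\vartheta^*$ depends on the data only through $\Gamma_u$ via \eqref{41}--\eqref{42}. For (a) the paper simply cites the two-sided estimate \eqref{partial_derivative_estimation} (already proved in Appendix \ref{GC_proof} by integrating Hamilton's ODE), which yields both $-\Gamma_u>0$ for $t<T$ and $-\Gamma_u\to 0$ as $t\to T$ in one stroke. Your sensitivity argument via \eqref{inner} reaches the same conclusion but has a small hole as stated: strict monotonicity of the value function in $u$ only gives $\Gamma_u\le 0$; to get \emph{strict} negativity of the pointwise derivative you need a quantitative bound (e.g., transplant the optimal control from $u_1$ to $u_2>u_1$, note the state shifts by the constant $u_2-u_1$, and bound the reduction in running cost below by $c\,(u_2-u_1)$ with $c>0$ for $t<T$) --- or simply invoke \eqref{partial_derivative_estimation}, which already does this.

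Part (b) is where your primary plan has a genuine gap. Differentiating \eqref{29} in $u$ gives, for $w=\Gamma_u$, the transport equation $w_t-\frac{\sigma_0^4}{2\sigma^2}\frac{1}{(\sigma_0^2u+1)^2}+(\widetilde{k}^*)'(w)\,w_u=0$, so even granting the spatial convexity $w_u\ge 0$, you get $w_t=\frac{\sigma_0^4}{2\sigma^2(\sigma_0^2u+1)^2}-(\widetilde{k}^*)'(w)\,w_u$ with both terms of opposing sign: the sign of $w_t$ at fixed $u$ is indeterminate, and your proposed decomposition (``$-\Gamma_u$ decreases in $u$, and the $t$-dependence pushes in the same direction'') cannot be closed by adding two separately signed contributions. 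The repair is latent in your own last sentence: the optimal trajectory \emph{is} a characteristic. By \eqref{41}, $u_t\triangleq Z_t+\frac{1}{\sigma^2}(T-t)$ satisfies $\frac{\d u_t}{\d t}=(k')^{-1}\bigl(-\frac{1}{\gamma}\Gamma_u(t,u_t)\bigr)=(\widetilde{k}^*)'(\Gamma_u(t,u_t))$, which is exactly the characteristic speed; hence along the optimal path $\frac{\d}{\d t}\Gamma_u(t,u_t)=w_t+(\widetilde{k}^*)'(w)\,w_u=\frac{\sigma_0^4}{2\sigma^2(\sigma_0^2u_t+1)^2}>0$, so $-\Gamma_u(t,u_t)$ strictly decreases and monotonicity of $(k')^{-1}$ gives (b), with no need to sign $w_u$ or $w_t$ separately. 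The paper's own proof of (b) is different and more elementary: it uses the fact that $\vartheta^*$ solves the deterministic problem \eqref{inner_problem2} and runs an exchange argument --- if $\vartheta^*$ were not decreasing, swapping its values on two intervals (moving the larger acquisition earlier) leaves $\int_0^T\gamma k(\vartheta_t^2)\,\d t$ unchanged, increases $u_t$ pointwise, and strictly lowers $\int_0^T\frac{\sigma_0^2}{2\sigma^2}\frac{\d t}{\sigma_0^2u_t+1}$, contradicting optimality. That rearrangement argument requires no regularity of $\Gamma$ and covers the irregular case $k''(0)=0$ directly (since $\vartheta^{(0)}$ also solves \eqref{inner_problem}), whereas your characteristics route is only valid under $k''(0)>0$ and must be supplemented, as you note, by the limit passage of Theorem \ref{limit}, including a uniform-in-$b$ lower bound on $\vartheta^{(b)}$ to preserve the strictness in (a).
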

    
\begin{proof} See Appendix \ref{property_proof}. \end{proof} 
    
    These results suggest that the investor chooses to acquire more information early in the investment period, as earlier information can be exploited over a longer horizon. The independence of $\vartheta^*$ from $\mu_0$ indicates that the investor focuses on the absolute error (as opposed to relative error) of her drift estimate, with the posterior variance of the unknown drift being the exclusive determinant of her information acquisition strategy. 
    
    The effects of various model parameters on $\vartheta^*$ are exhibited in Figures \ref{a}-\ref{c}:
    \begin{itemize}
        \item Figure \ref{a} shows that the optimal level of information acquisition decreases as the cost parameter $c$ increases, which aligns with the economic intuition that higher price reduces demand.
        \item Figure \ref{b} demonstrates that higher risk aversion $\gamma$ leads to lower information acquisition. This is because more risk-averse investors allocate less wealth to risky assets, reducing the marginal benefit of acquiring costly private information.
        \item Figure \ref{c} illustrates that information acquisition increases with the prior variance $\sigma_0^2$ of the drift estimate, because greater prior uncertainty motivates investors to reduce ambiguity by acquiring more information.
    \end{itemize}
   
    \begin{figure}
		\centering
		\begin{minipage}{0.3\textwidth} 
			\centering
			\includegraphics[scale=0.22]{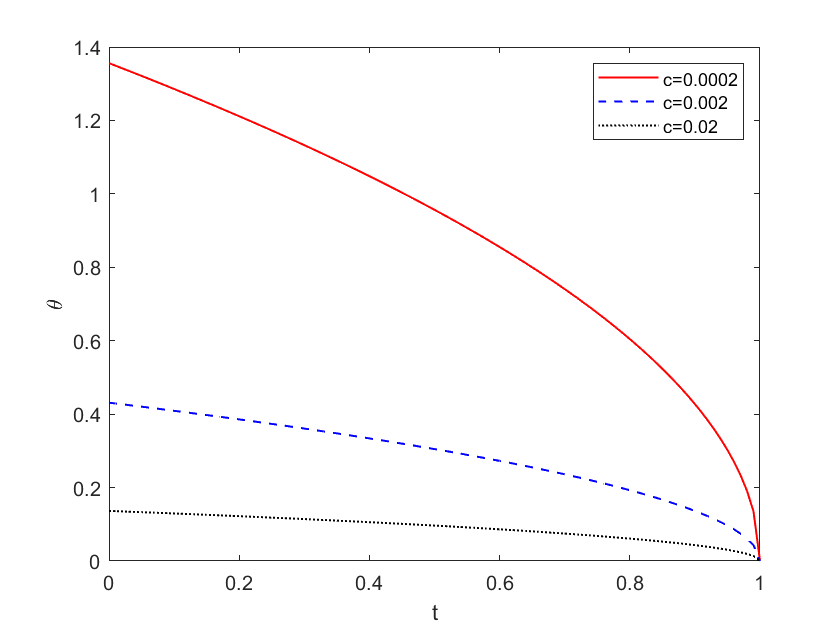}
            \caption{Effect of information cost}\label{a}
		\end{minipage}
		\begin{minipage}{0.3\textwidth}
			\centering
			\includegraphics[scale=0.22]{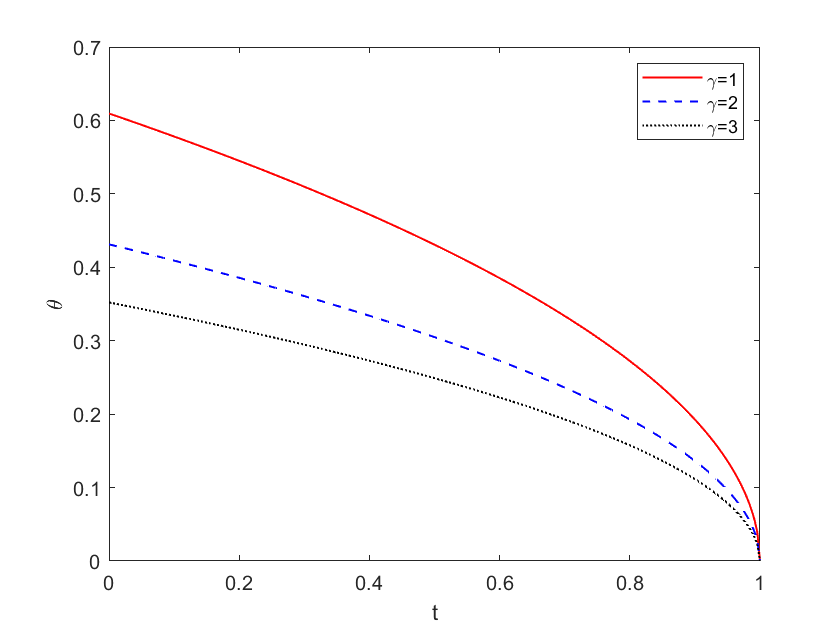}
            \caption{Effect of risk aversion}\label{b}
		\end{minipage}
        \begin{minipage}{0.3\textwidth}
			\centering
			\includegraphics[scale=0.22]{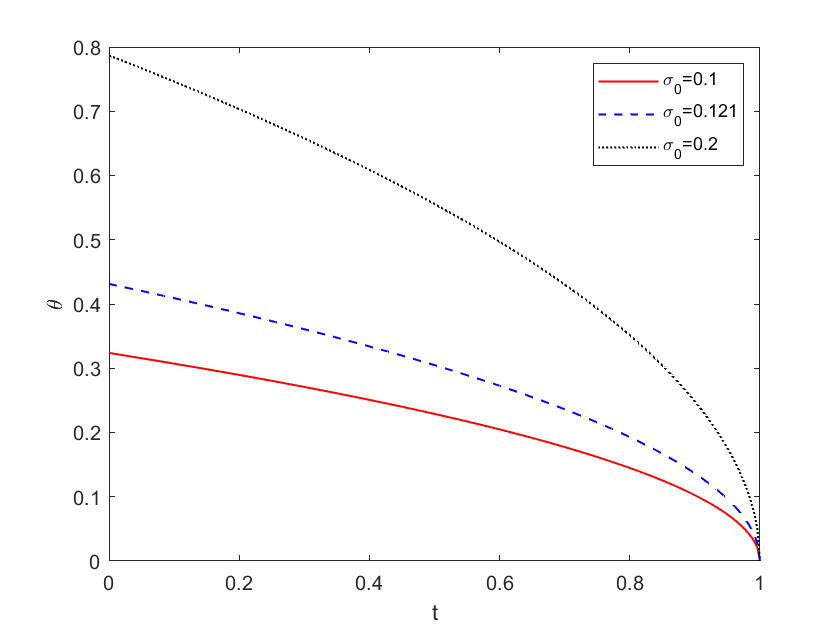}
            \caption{Effect of prior variance}\label{c}
		\end{minipage}
			
	\end{figure}

    When considering the impact of asset price volatility $\sigma$ and investment horizon $T$, the situation becomes slightly more complex. Figure \ref{figure_3} shows that there is no monotonic relationship between $\vartheta^*$ and $\sigma$. When $\sigma\rightarrow 0$, we have $\vartheta^*\rightarrow 0$, because the observed asset price process $$\frac{\d S_t}{S_t}=\mu \d t+\sigma\d W_t,$$ 
    already contains sufficiently precise information about $\mu$, rendering the purchase of additional private signal unnecessary. To better interpret the behavior for larger $\sigma$, %where the asset price itself is no longer a reliable source of information, 
    we further normalize our model: $$\frac{\d S_t}{\sigma S_t}=\frac{\mu}{\sigma}\d t+\d W_t,$$
    $$\d \widetilde{B}_t=\frac{\mu}{\sigma}(\sigma\vartheta_t)\d t+\d B_t.$$
    Rather than the drift parameter $\mu$, consider the term $\frac{\mu}{\sigma}$ to be the source of model uncertainty. In this case, information obtained from asset price is of constant quality (the volatility is normalized to 1), and two counteracting forces determine the optimal level of $\vartheta^*$:
    \begin{itemize}
        \item A larger $\sigma$ reduces the prior variance of $\frac{\mu}{\sigma}$, thus lowering the perceived benefit of acquiring additional information.
        \item  However, a larger $\sigma$ also reduces the cost of acquiring private information, because obtaining private signal on $\frac{\mu}{\sigma}$ with instantaneous variance $\frac{1}{\vartheta_t^2\d t}$ takes merely $k(\frac{1}{\sigma^2}\vartheta_t^2)$.
    \end{itemize}  
    When $\sigma$ is small, the second effect dominates, leading to an increase in $\vartheta_0^*$ with $\sigma$. Beyond a critical point (approximately $\sigma\approx 0.12$), the first effect dominates and $\vartheta_0^*$ starts to decline.

    \begin{figure}[htbp]
		\centering
        \begin{minipage}{0.49\textwidth}
			\centering
			\includegraphics[scale=0.3]{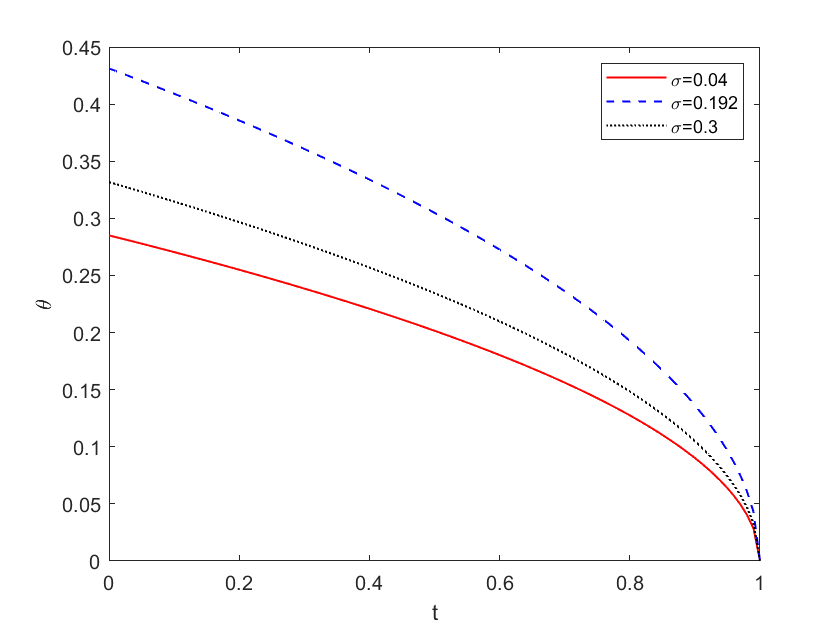}
		\end{minipage}\hfill
	\begin{minipage}{0.49\textwidth}
			\centering
			\includegraphics[scale=0.3]{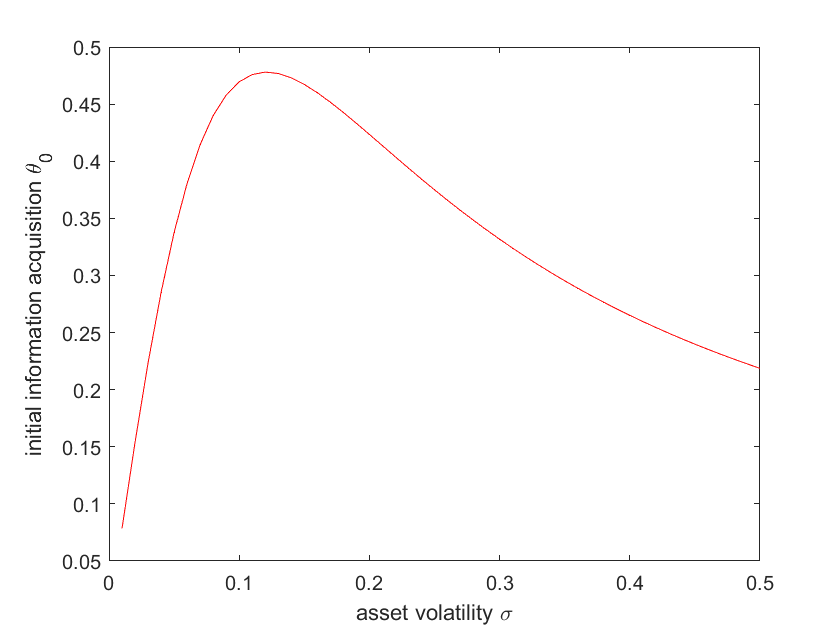}
		\end{minipage}\hfill
        \caption{Reaction of initial information acquisition to the change of asset volatility}\label{figure_3}
	\end{figure}

    \begin{figure}
		\centering
        \begin{minipage}{0.49\textwidth}
			\centering
			\includegraphics[scale=0.3]{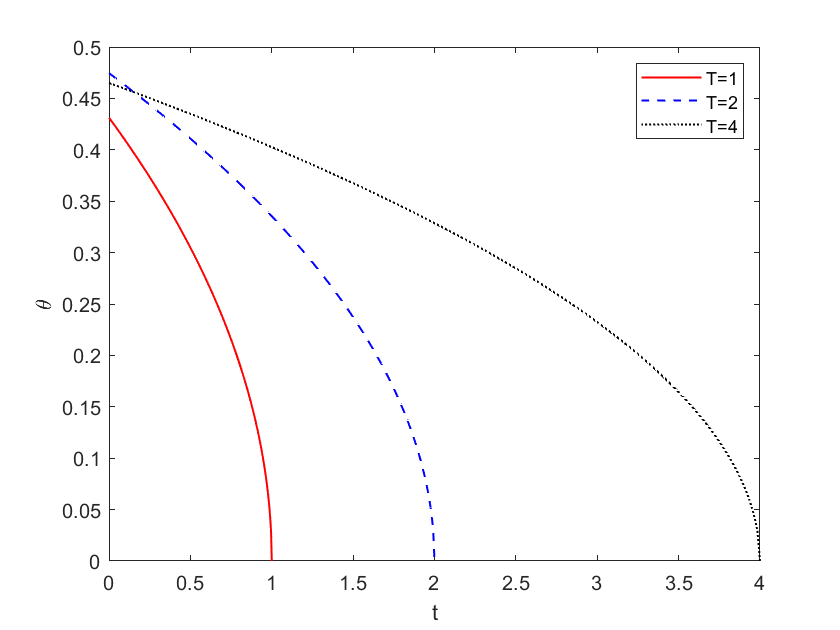}
		\end{minipage}
	\begin{minipage}{0.49\textwidth}
			\centering
			\includegraphics[scale=0.3]{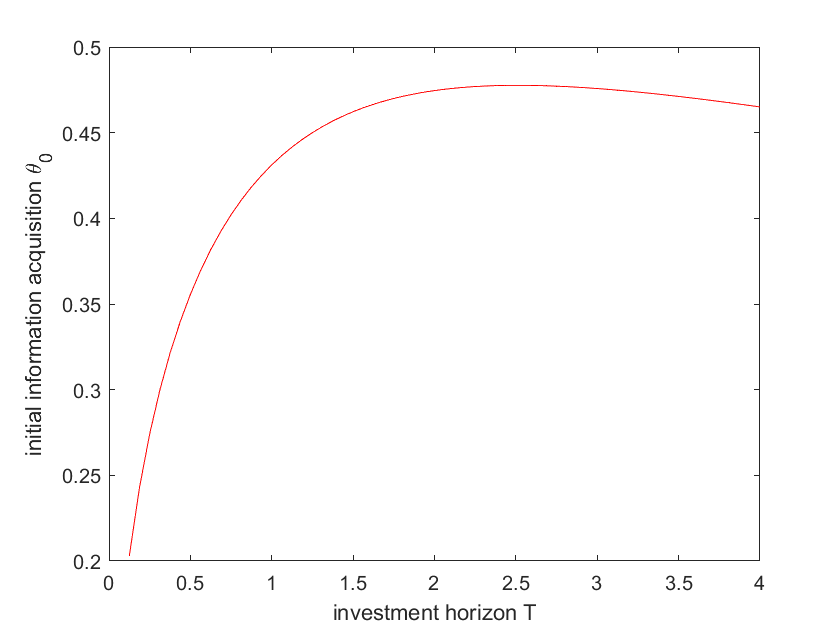}
		\end{minipage}
        \caption{Reaction of initial information acquisition to the change of investment horizon}\label{figure_2}
	\end{figure}

    Finally, we examine the impact of investment horizon, as illustrated in Figure \ref{figure_2}.  When $T\rightarrow 0$, we have $\vartheta^*\rightarrow 0$, because the investor has insufficient time to benefit from any learning. As the investment horizon lengthens, the value of early learning rises, and the investor finds it worthwhile to invest in information acquisition. Due to the increasing marginal cost (convexity of $k$), starting from a very high information acquisition level is not so profitable as starting from a moderate one and maintaining it over a longer period. As a result, the increasing trend in $\vartheta_0^*$ reverses at $T\approx 2.5$.

    \section{Discussions}\label{sec7}
    In this final section, we present a few extensions of the results from Section \ref{general}.
    \subsection{Economic interpretation of the Hamilton-Jacobi equation} \label{eco_int}
        
    %From Theorem \ref{verification_2}, we see that the investor optimally determines her information-and-trading strategy through a two-stage process. First, she selects the optimal information acquisition strategy $\vartheta^*$, which is deterministic. Then, given the fixed information acquisition strategy $\vartheta^*$, she chooses the optimal trading strategy \begin{equation}\label{best_response}
        %\varpi^*_t=\frac{1}{\sigma^2\gamma}\frac{(\sigma_0^2Z_t+1)\mu_t}{\sigma_0^2Z_t+\frac{\sigma_0^2}{\sigma^2}(T-t)+1}
    %\end{equation} 
    %based on the posterior distribution of $\mu$. 
    
    %In fact, as will be proved in the following, if the investor fix a deterministic $\vartheta$ as her information acquisition strategy, then it is always optimal to choose the trading strategy $\varpi(\vartheta)$ according to \eqref{best_response}. Let $V(\vartheta)$ be the expected utility of the investor's terminal wealth when she chooses the information-and-trading strategy $(\vartheta,\varpi(\vartheta))$. Then \eqref{29} is nothing but the dynamic programming equation related to this problem of maximizing $V(\vartheta)$ over all possible deterministic information acquisition strategies. 
    
    Fix a deterministic information acquisition strategy $\vartheta$ and define $$Z_t=\frac{t}{\sigma^2}+\int_0^t \vartheta_s^2\d s,\qquad u_t=Z_t+\frac{1}{\sigma^2}(T-t).$$
    We consider the following stochastic control problem with $\varpi$ as the only controlled variable.
    
    In Remark \ref{primal_controlled_system_2}, we have mentioned that for the Gaussian prior distribution case, the optimal stochastic control system under $\P$ can be described in terms of the state processes $\{X_t\}_{0\leq t\leq T}$ and $\{\mu_t\}_{0\leq t\leq T}$, which is given by 
    \begin{align*}        \nonumber 
        &V^{0}(t,x,\mu)=\sup_{\varpi}\E\left[U({X}_T)| {X}_t=x, \mu_t=\mu\right]\\
       & \text{subject to }
        \begin{cases}
        \d {X}_t = \varpi_t[\mu_t\d t+\sigma \d M_t]-k(\vartheta_t^2)\d t,\\	
        \d\mu_t=\frac{\sigma_0^2}{Z_t\sigma_0^2+1}\left[\frac{1}{\sigma}\d M_t+\vartheta_t\d N_t\right].
        \end{cases}
        \end{align*}
    With the choice of CARA utility function $U(x)=-\frac{1}{\gamma}e^{-\gamma x}$, the above control problem can be reformulated as
    \begin{align*}
        \nonumber& V^{1}(t,x,\mu)\triangleq \frac{V^{0}(t,x,\mu)}{\exp\{\gamma\int_t^Tk(\vartheta_s^2)\d s\}}=\sup_{\varpi}\ \E\left[U(\widetilde{X}_T)| \widetilde{X}_t=x, \mu_t=\mu\right]\\
        &\text{subject to}
        \begin{cases}
        \d \widetilde{X}_t = \varpi_t[\mu_t\d t+\sigma \d M_t],\\	
        \d\mu_t=\frac{\sigma_0^2}{Z_t\sigma_0^2+1}\left[\frac{1}{\sigma}\d M_t+\vartheta_t\d N_t\right].
        \end{cases}
        \end{align*}
    The corresponding HJB equation is given by 
    \begin{equation}\label{71}
       \;\;\; \begin{cases}V^1_t+\;\underset{\pi\in\mathbb{R}}{\sup}\; \left\{V^1_x\pi\mu+\frac{1}{2}V^1_{xx}\pi^2\sigma^2+\frac{1}{2}V^1_{\mu\mu}\frac{\sigma_0^4}{(Z_t\sigma_0^2+1)^2}(\frac{1}{\sigma^2}+\vartheta_t^2)+V^1_{x\mu}\frac{\sigma_0^2\pi}{Z_t\sigma_0^2+1}\right\}=0,\\
        V^1(T,x,\mu)=U(x)=-\frac{1}{\gamma}e^{-\gamma x}.
        \end{cases}
    \end{equation}
    Motivated by the exponential-quadratic structure observed in previous sections, we make the following ansatz:
    $$V^1(t,x,\mu)=-\frac{1}{\gamma}e^{-\gamma x}\exp\left\{A_0(t)+A_1(t)\mu+\frac{1}{2}A_2(t)\mu^2\right\}.$$
    Substituting this into the HJB equation (\ref{71}) leads to the following system of ODEs:
    \begin{equation*}
        \begin{cases}
        A_0'(t)+\frac{1}{2}A_2(t)\frac{\sigma_0^4}{(Z_t\sigma_0^2+1)^2}(\frac{1}{\sigma^2}+\vartheta_t^2)+\frac{1}{2}A_1^2(t)\vartheta_t^2\frac{\sigma_0^4}{(Z_t\sigma_0^2+1)^2}=0,\\
        A_1'(t)+A_1(t)A_2(t)\vartheta_t^2\frac{\sigma_0^4}{(Z_t\sigma_0^2+1)^2}-\frac{A_1(t)}{\sigma^2}\frac{\sigma_0^2}{Z_t\sigma_0^2+1}=0,\\
        A_2'(t)+A_2^2(t)\vartheta_t^2\frac{\sigma_0^4}{(Z_t\sigma_0^2+1)^2}-\frac{1}{\sigma^2}\left(1+2A_2(t)\frac{\sigma_0^2}{Z_t\sigma_0^2+1}\right)=0,\\
        A_0(T)=A_1(T)=A_2(T)=0.
        \end{cases}
    \end{equation*}
    This system admits a unique solution given by
    $$A_0(t)=\frac{1}{2}\log \frac{\sigma_0^2Z_t+1}{\sigma_0^2u_t+1}+\int_t^T\frac{\sigma_0^2}{2\sigma^2}\frac{1}{\sigma_0^2u_s+1}\d s,\quad A_1(t)=0,\quad A_2(t)=\frac{t-T}{\sigma^2}\cdot\frac{\sigma_0^2Z_t+1}{\sigma_0^2u_t+1}.$$
    Therefore, the optimal response $\varpi(\vartheta)$ is expressed in feedback form \begin{equation}\label{optimal_trading}
        \varpi^*_t=\frac{\sigma_0^2Z_t+1}{\sigma_0^2u_t+1}\cdot \frac{\mu_t}{\sigma^2\gamma},
    \end{equation}  
    %($\varpi^*_t=-\frac{\mu V_x+\frac{\sigma_0^2}{Z_t\sigma_0^2+1}V_{x\mu}}{\sigma^2 V_{xx}}=\frac{\mu+\frac{\sigma_0^2}{Z_t\sigma_0^2+1}h_\mu}{\sigma^2\gamma}$.)
    which coincides with Eq.\eqref{43} derived previously.
    
    The corresponding optimal value function $V(\vartheta)\triangleq V^0(0,x,\mu_0)$ is given by 
    $$V(\vartheta)=-\frac{1}{\gamma}e^{-\gamma x}\exp\left\{\gamma\int_0^Tk(\vartheta_t^2)\d t+\frac{1}{2}\log \left(\frac{\sigma^2}{\sigma_0^2 T+\sigma^2}\right)-\frac{\mu_0^2 T}{\sigma_0^2T+\sigma^2}+\int_0^T\frac{\sigma_0^2}{2\sigma^2}\frac{1}{\sigma_0^2u_t+1}\d t\right\}.$$
    By adding and subtracting constants that do not depend on the control, it is obvious that maximizing $V(\vartheta)$ amounts to solving the following deterministic control problem: 
    \begin{align}\label{inner_problem}
        \nonumber & \mini_{\vartheta}\;\left\{ \int_0^T\gamma k(\vartheta_t^2)\d t+\int_0^T\frac{\sigma_0^2}{2\sigma^2}\frac{1}{\sigma_0^2u_t+1}\d t-\frac{\mu_0^2}{2\sigma_0^2}\right\}\\
        &\text{subject to }
         \ \d u_t=\vartheta_t^2\d t,\;u_0=\frac{T}{\sigma^2}.
    \end{align} 
    Therefore, Eq.\eqref{29} can be interpreted as the dynamic programming equation associated with the maximization of $V(\vartheta)$ over all possible deterministic information acquisition strategies. In fact, the above derivation suggests another separation principle that the problems of optimal trading and active learning can be effectively decoupled, even though the optimization is conducted simultaneously. The investor first selects an information acquisition strategy by solving Problem \eqref{inner_problem}, which is only related (through $Z_t$) to the posterior variance of her drift estimate, then fixes this strategy to construct a portfolio proportional to the posterior mean, as given in \eqref{optimal_trading}.
    
    \subsection{Correlated Brownian motions}
    
    Consider the setting in which the Brownian motions driving the asset price process and the signal process are not mutually independent. To save the notation $\widetilde{B}$ for later use, the signal process is denoted by $\Xi=\{\Xi_t\}_{ 0\le t\le T}$ throughout this subsection. 
    
    An admissible information acquisition strategy $\theta:[0,T]\times C([0,T])\times C([0,T])\rightarrow \mathbb{R}$ is a bounded, progressively measurable functional such that the SDE system \begin{equation*}
        \begin{cases}
        \d S_t=S_t[\mu\d t+\sigma\d W_t],\\
        \d \Xi_t=\mu\theta(t,S,\Xi)\d t+\rho\d W_t+\sqrt{1-\rho^2}\d B_t,
        \end{cases}
    \end{equation*}
    admits a unique $\F^{\mu,W,B}$-progressively measurable solution, where $W$ and $B$ are independent Brownian motions, and $\rho\in(-1,1)$ is the correlation factor. 
    
    Let $ \{\vartheta_t=\theta(t,S,\Xi) \}_{0\leq t\leq T}$ be the realized process of admissible information acquisition strategy $\theta$. Define $\widetilde{W}$  and  $\widetilde{B}$ by
    $$\widetilde{W}_t=\frac{\mu}{\sigma}t+W_t,\qquad \widetilde{B}_t=\int_0^t\frac{\mu}{\sqrt{1-\rho^2}}(\vartheta_s-\frac{\rho}{\sigma})\d s+B_t,$$
    and the exponential process $\Psi$ by $$\Psi_t=\exp\left\{-\frac{\mu}{\sigma}W_t-\frac{\mu^2}{2\sigma^2}t-\int_0^t \frac{\mu}{\sqrt{1-\rho^2}}(\vartheta_s-\frac{\rho}{\sigma})\d B_s-\int_0^t \frac{\mu^2}{2(1-\rho^2)}(\vartheta_s-\frac{\rho}{\sigma})^2\d s\right\}.$$
    Under the equivalent measure $\widetilde{\P}$ defined by \eqref{measure_trans}, $\widetilde{W}$ and $\widetilde{B}$ are again independent Brownian motions with $\F^{S,\Xi}=\F^{\widetilde{W},\widetilde{B}}$. 
    
    Consider the following weak form stochastic control problem:
    \begin{align}\label{60}
    V(t,x,y,z)=\sup_{({\vartheta},\varpi)\in\widetilde{\mathcal{A}}}& \E^{\widetilde{\P}}\left[F({Y}_T,{Z}_T)U({X}_T)| {X}_t=x, {Y}_t=y, {Z}_t=z\right]\\ \nonumber
    \text{subject to }&
    \begin{cases}
     \d {X}_t = {\varpi}_t\sigma \d \widetilde{W}_t- k\left( \vartheta_t^2\right) \d t,\\	
    \d {Y}_t=\frac{1}{\sigma}\d \widetilde{W}_t+\frac{1}{\sqrt{1-\rho^2}}(\vartheta_t-\frac{\rho}{\sigma})\d \widetilde{B}_t,\\
    \d {Z}_t=(\frac{1}{\sigma^2}+\frac{1}{{1-\rho^2}}(\vartheta_t-\frac{\rho}{\sigma})^2)\d t, 
    \end{cases}
    \end{align} 
    where $\widetilde{\mathcal{A}}$ is the set of realized admissible strategies. The information cost function $k:[0,+\infty)\rightarrow [0,+\infty)$ is assumed to be a $C^2$ increasing function satisfying $k(0)=0$, $k''(x)>0,\forall x>0$ and $\underset{x\rightarrow+\infty}{\lim}k'(x)=+\infty$. Note that we do not impose condition on $k'(0)$ in this case.
    
    By introducing the auxiliary probability space, a strong form stochastic control system can be derived. The associated HJB equation is 
    \begin{equation}
\label{75}    \begin{cases}
        \underset{\pi,\theta}{\sup}\left\{V_t-k(\theta^2)V_x+(\frac{1}{\sigma^2}+\frac{1}{1-\rho^2}(\theta-\frac{\rho}{\sigma})^2)(V_z+\frac{1}{2}V_{yy})+\frac{1}{2}\pi^2\sigma^2V_{xx}+\pi V_{xy}\right\}=0,\\
        V(T,x,y,z)=F(y,z)U(x).\end{cases}
        \end{equation}
    By defining
    \begin{equation}\label{conjugate_2}
        k^*(x)\triangleq \underset{\theta\in\mathbb{R}}{\sup}\left\{\frac{1}{1-\rho^2}(\theta-\frac{\rho}{\sigma})^2x-k(\theta^2)\right\},\qquad x\ge 0,
    \end{equation} 
    equation \eqref{75} can be transformed into \eqref{conjugate}, with $\left(\frac{V_z+\frac{1}{2}V_{yy}}{V_x}\right)^+$ replaced by $\frac{V_z+\frac{1}{2}V_{yy}}{V_x}$.
    
    Assume now that $\rho\in (0,1)$, the case $\rho\in (-1,0)$ is analogous. It is clear that $0<k^*(x)<+\infty$ holds for any $ x>0$. Define the optimizer $$\theta^*(x)=\underset{\theta}{\mathrm{argmax}}\left\{\frac{1}{1-\rho^2}(\theta-\frac{\rho}{\sigma})^2x-k(\theta^2)\right\}.$$
    Then $\theta^*(0)=0$, and for each $x>0$, $\theta^*(x)$ is a non-empty subset of $(-\infty,0)$.
    %(Compare $\theta$ and $-\theta$, and consider the case $\theta\rightarrow -\infty$ (use $\underset{x\rightarrow+\infty}{\lim}k'(x)=+\infty$).)
    In fact, for any given $x_0>0$ and $\theta\in\theta^*(x_0)$, we have the first order condition \begin{equation}\label{FOC}
        \frac{1}{1-\rho^2}(\theta-\frac{\rho}{\sigma})x_0-\theta k'(\theta^2)=0.
    \end{equation}
   Because the left-hand side of \eqref{FOC} is strictly increasing in $\theta\in (-\infty,0)$, and the right-hand side is strictly decreasing, we find that $\theta^*(x_0)$ consists of a singleton, which is the unique negative solution to Eq.\eqref{FOC}. By the implicit function theorem, $\theta^*\in C^1(0,+\infty)\cap C[0,+\infty)$, with derivative $$\frac{\d\theta^*(x)}{\d x}=-\frac{\frac{1}{1-\rho^2}(\theta^*(x)-\frac{\rho}{\sigma})}{\frac{x}{1-\rho^2}-k'(\theta^*(x)^2)-2\theta^*(x)^2k''(\theta^*(x)^2)}<0.$$
    %(That $\frac{\d\theta^*(x)}{\d x}<0$ is derived from the definition of $\theta^*$, not this explicit formula.) 
    Applying the envelop theorem yields
    $$(k^*)'(x)=\frac{1}{1-\rho^2}(\theta^*(x)-\frac{\rho}{\sigma})^2,$$ 
    which implies that $k^*\in C^2(0,+\infty)\cap C[0,+\infty)$ and is strictly increasing, convex, and positive.

    Define an auxiliary information acquisition process by $$\widetilde{\vartheta}_t=\frac{1}{\sqrt{1-\rho^2}}(\vartheta_t-\frac{\rho}{\sigma}),\quad t\in [0,T].$$
    %and an auxiliary convex information cost function $\widetilde{k}$ satisfying $\widetilde{k}(\widetilde{\vartheta_t})=k(\vartheta_t^2)$.
    %($k$ is increasing and convex; $(\sqrt{1-\rho^2}\tilde{\vartheta_t}+\frac{\rho}{\sigma})^2$ is convex in $\tilde{\vartheta_t}$.)
    %Then the stochastic control system \eqref{60} has the same form as in \eqref{6}, 
    Then we may now follow similar arguments as in Sections \ref{general} and \ref{eco_int} to establish the following result:

    \begin{theorem}
        Let the information cost $k$ be a $C^2$ increasing function satisfying $k(0)=0$, $k''(x)>0,\forall x>0$ and $\underset{x\rightarrow+\infty}{\lim}k'(x)=+\infty$. Let $k^*$ be defined by \eqref{conjugate_2}, and $\widetilde{k}^*(x)=-\gamma k^*(-\frac{x}{\gamma}),\;x\le0.$

        \begin{itemize}
            \item [(a)] Given a bounded deterministic information acquisition strategy $\vartheta$. In order to maximize her expected utility of terminal wealth, it is optimal for the investor to choose the trading strategy \begin{equation}\label{70}
                \varpi_t=\frac{\sigma_0^2 Z_t+1}{\sigma_0^2 u_t+1}\cdot\frac{\mu_t}{\sigma^2\gamma},
            \end{equation}
            where $\mu_t=\E[\mu|\F^{\widetilde{W},\widetilde{B}}_t]$ is the posterior mean and $$Z_t=\frac{t}{\sigma^2}+\int_0^t \widetilde{\vartheta}_s^2\d s,\qquad u_t=Z_t+\frac{1}{\sigma^2}(T-t).$$
            %($\Sigma_t=\frac{\sigma_0^2}{\sigma_0^2Z_t+1}$ still holds, but dynamic of $\d \mu_t$ has a changed coefficient before $\d N_t$, from $\vartheta_t$ to $\tilde{\vartheta_t}$.)
            
            \item [(b)] If in addition $k'(0)\triangleq \underset{x\rightarrow 0+}{\lim}k'(x)\neq 0$, then the Hamilton-Jacobi equation \eqref{29} admits a unique classical solution $\Gamma(\cdot,\cdot)\in C^2([0,T)\times[0,+\infty))\cap C([0,T]\times[0,+\infty))$, which satisfies $\Gamma_u(t,u)\le 0$ for all $(t,u)\in[0,T)\times[0,+\infty)$. In this case the optimal information acquisition strategy is given by 
            $$\vartheta^*_t=\theta^*\left(-\frac{1}{\gamma}\Gamma_u(t,u_t)\right),$$
            %($\theta^*(\frac{V_z+\frac{1}{2}V_{yy}}{V_x})$.)
            where $u$ is the solution to ODE $$
            \d u_t=\frac{1}{1-\rho^2}\left(\theta^*\left(-\frac{1}{\gamma}\Gamma_u(t,u_t)\right)-\frac{\rho}{\sigma}\right)^2\d t, \qquad u(0)=\frac{T}{\sigma^2},$$
            and the optimal trading strategy is given by \eqref{70}.
            %($k'(0)\neq 0$ implies the regularity condition $(k^*)''(0)<+\infty$ in this case.)
            
            \item [(c)] If $k'(0)=0$, consider the regularized cost function $$k^{(b)}(x)=k(x)+bx,\quad b>0.$$
            Let $\vartheta^{(b)}$ be the optimal information acquisition strategy for cost function $k^{(b)}$, then the limiting function $$\vartheta^{(0)}_t=\underset{b\rightarrow 0}{\lim}\vartheta_t^{(b)}$$ is a well-defined continuous function on $ [0,T]$. Let $\varpi^{(0)}$ be defined by \eqref{70}, then $(\vartheta^{(0)},\varpi^{(0)})$ is the optimal realized information-and-trading strategy for cost function $k$.
            %($\tilde{\vartheta_t}$ is an optimal solution to \eqref{inner_problem}. Key point is $\vartheta_t^{(b)}$ is uniformly bounded and bounded away from 0.)
            \end{itemize}
    \end{theorem}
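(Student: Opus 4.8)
The plan is to reduce all three parts to the machinery of Sections~\ref{general} and~\ref{eco_int} through the change of variable $\widetilde\vartheta_t=\frac{1}{\sqrt{1-\rho^2}}(\vartheta_t-\frac\rho\sigma)$. Under this substitution the dynamics of $(Y,Z)$ in \eqref{60} become $\d Y_t=\frac1\sigma\d\widetilde W_t+\widetilde\vartheta_t\,\d\widetilde B_t$ and $\d Z_t=(\frac1{\sigma^2}+\widetilde\vartheta_t^2)\d t$, which are \emph{identical in form} to the independent-noise case with $\widetilde\vartheta$ playing the role of the old control. Consequently the filtering identities of Remark~\ref{primal_controlled_system_2}, the exponential--quadratic value-function ansatz, and the reduced Hamilton--Jacobi equation \eqref{29} all carry over verbatim; the only genuinely new object is the reparametrized cost $k(\vartheta^2)=k\big((\sqrt{1-\rho^2}\,\widetilde\vartheta+\frac\rho\sigma)^2\big)$, whose Legendre structure is precisely encoded by the modified conjugate \eqref{conjugate_2}. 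For part~(a) I would fix the deterministic $\vartheta$, so that $\widetilde\vartheta$, $Z_t$ and $u_t$ are deterministic and the problem collapses to pure trading optimization. Applying Kalman--Bucy filtering as in Remark~\ref{primal_controlled_system_2} (with $\widetilde\vartheta$ in place of $\vartheta$), I would repeat the computation of Section~\ref{eco_int} line by line: insert $V^1=-\frac1\gamma e^{-\gamma x}\exp\{A_0(t)+A_1(t)\mu+\frac12A_2(t)\mu^2\}$ into the associated HJB equation, obtain the same Riccati-type ODE system, solve it to get $A_1\equiv0$ with the stated $A_0,A_2$, and read off the optimal feedback \eqref{70}. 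Since every algebraic step is unchanged after the substitution, part~(a) follows; it also supplies the fact used later that for \emph{any} deterministic $\vartheta$ the optimal trading response is \eqref{70} and the joint problem reduces to the deterministic control problem \eqref{inner_problem}.

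For part~(b) the first task is the boundary regularity of $k^*$. Using the envelope identity $(k^*)'(x)=\frac1{1-\rho^2}(\theta^*(x)-\frac\rho\sigma)^2$ already recorded in the excerpt, together with the asymptotics of the first-order condition \eqref{FOC} as $x\to0+$, I would show that $k'(0)\neq0$ forces $\theta^*$ to be differentiable at the origin, hence $k^*\in C^2[0,+\infty)$; this plays exactly the role that $k''(0)>0$ played in Theorem~\ref{general_characteristic}. With $\widetilde k^*=-\gamma k^*(-\cdot/\gamma)$ now $C^2$, the method of characteristics from the proof of Theorem~\ref{general_characteristic} applies without change to \eqref{29}, yielding a unique classical solution $\Gamma$ with $\Gamma_u\le0$. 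I would then verify that the Hamiltonian in \eqref{75} is maximized at $\vartheta^*_t=\theta^*(-\frac1\gamma\Gamma_u(t,u_t))$, that the induced $u_t$ solves the stated ODE (well-posed by the $C^1$-dependence of characteristics on initial data), and run the verification argument of Theorem~\ref{verification_2}---admissibility, integrability of the candidate, and concavity in the controls---to confirm optimality and the trading form \eqref{70}.

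For part~(c), when $k'(0)=0$ I would regularize by $k^{(b)}(x)=k(x)+bx$, noting $(k^{(b)})'(0)=b>0$ so that part~(b) applies and produces $\vartheta^{(b)}$. Following the strategy of Theorem~\ref{limit}, I would establish monotonicity of $\vartheta^{(b)}$ in $b$ together with a uniform bound, giving a pointwise limit $\vartheta^{(0)}$ whose continuity follows from the ODE characterization. Optimality of $(\vartheta^{(0)},\varpi^{(0)})$ for the original cost $k$ is then obtained by passing to the limit in the optimality of $\vartheta^{(b)}$ for $k^{(b)}$: since part~(a) reduces every deterministic strategy to its value through \eqref{inner_problem}, and since $k^{(b)}\downarrow k$, stability of the deterministic minimization identifies $\vartheta^{(0)}$ as the optimizer for $k$.

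The hard part will be the behaviour at the boundary $x=0$. In part~(b) the entire argument hinges on showing that $k'(0)\neq0$ is exactly the condition guaranteeing $C^2$-smoothness of $k^*$ (equivalently, finiteness of $(k^*)''(0)$) and that the resulting characteristic curves remain in the admissible region with $C^1$ dependence on initial data; in part~(c) the corresponding difficulty is that $k'(0)=0$ permits vanishing marginal cost near the origin, so the delicate step is to prove that $\vartheta^{(b)}$ does not blow up as $b\to0$ and that the possibly non-smooth limit is genuinely optimal. Both obstacles reduce to controlling $k^*$ and $\theta^*$ near $x=0$, which is the technical heart of the result.
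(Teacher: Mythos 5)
Your overall route is the one the paper intends: the paper itself does not write out a proof of this theorem but reduces it, via the substitution $\widetilde{\vartheta}_t=\frac{1}{\sqrt{1-\rho^2}}(\vartheta_t-\frac{\rho}{\sigma})$, to the arguments of Sections \ref{general} and \ref{eco_int}, and your treatment of parts (a) and (b) matches this. In particular, you correctly identify the one genuinely new analytic point: that $k'(0)\neq 0$ is precisely what replaces $k''(0)>0$ here, because in the implicit-function-theorem formula for $\frac{\d\theta^*}{\d x}$ the denominator tends to $-k'(0)$ as $x\to 0+$ while the numerator tends to the nonzero constant $\frac{\rho}{\sigma(1-\rho^2)}$; hence $k'(0)\neq 0$ yields a finite one-sided derivative of $\theta^*$ at the origin, $(k^*)''(0+)<+\infty$ by the envelope identity, and a $C^2$ extension of $k^*$, after which Theorem \ref{general_characteristic} and the verification argument of Theorem \ref{verification_2} apply verbatim. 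Part (a) as you describe it is exactly the Riccati computation of Section \ref{eco_int} with $\widetilde{\vartheta}$ in place of $\vartheta$.

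The gap is in part (c). You propose to obtain $\vartheta^{(0)}$ as a \emph{monotone} limit of $\vartheta^{(b)}$ in $b$, but pointwise monotonicity of $t\mapsto\vartheta^{(b)}_t$ in the regularization parameter is asserted without argument, and it is not a standard comparative-statics conclusion for this problem (a larger $b$ lowers total acquisition, but nothing prevents the optimal profiles from crossing pointwise). It is also unnecessary: the paper's proof of the analogous Theorem \ref{limit} (Appendix \ref{limit_proof}) instead derives uniform-in-$b$ bounds on $(\vartheta^{(b)}_t)^2$ from the derivative estimate \eqref{partial_derivative_estimation} together with Hamilton's ODE \eqref{31}, derives equicontinuity on compact subintervals of $[0,T)$ from the Euler--Lagrange equation \eqref{Euler}, and extracts a locally uniformly convergent subsequence by Ascoli--Arzel\`a; the \emph{full} limit $\lim_{b\to 0}\vartheta^{(b)}$ then exists because the deterministic problem \eqref{inner_problem} is strictly convex and hence has a unique minimizer, so all subsequential limits coincide. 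Relatedly, your closing step (``stability of the deterministic minimization'') only dominates deterministic competitors; to prove optimality of $(\vartheta^{(0)},\varpi^{(0)})$ against an \emph{arbitrary} admissible realized strategy $(\vartheta,\varpi)$ you need the chain of inequalities from Appendix \ref{limit_proof}: dominated convergence to replace $k$ by $k^{(b_n)}$ along the competitor, optimality of $(\vartheta^{(b_n)},\varpi^{(b_n)})$ for $k^{(b_n)}$ (which holds against all admissible strategies by part (b)), and then the fact from part (a) that $\varpi^{(0)}$ given by \eqref{70} is the optimal response to the deterministic $\vartheta^{(0)}$. Your part (a) supplies exactly this last ingredient, so the repair is routine, but as written the monotonicity claim and the restriction to deterministic rivals leave part (c) incomplete. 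A further small point worth recording in the correlated setting: the strict convexity underlying uniqueness in \eqref{inner_problem2} must be checked for the reduced cost expressed in the effective precision variable $\widetilde{\vartheta}^2$, since the paper verifies convexity of $k^*$ from \eqref{conjugate_2} but states the deterministic problem only in the independent-noise case.
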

    \subsection{CRRA utility function}
    If we consider the constant relative risk aversion (CRRA) utility function, given by $U(x)=\frac{1}{\gamma}x^{\gamma},\;\gamma\in (-\infty,0)$, then the variable separating structure of the problem fails. To apply the theoretical framework developed in Section \ref{general}, several modifications to the model are required.
    
    First, the controlled variable $\varpi$ must be reinterpreted as the proportion of wealth invested in the risky asset, rather than the monetary amount. Second, the cost of information acquisition should be modeled as being proportional to the investor’s current wealth. Although the second assumption may appear somewhat counterintuitive, it aligns with a core modeling choice in the seminal work by \citet{Andrei2020}.

   Under these modifications, the wealth process becomes
        $$\d X_t = \varpi_tX_t\sigma \d \widetilde{W}_t- k\left( \vartheta_t^2\right)X_t \d t,$$
    and the corresponding HJB equation becomes
        \begin{equation}\label{CRRA_HJB}
            \begin{cases}
        \underset{\pi,\theta}{\sup}\left\{V_t-k(\theta^2)xV_x+(\frac{1}{\sigma^2}+\theta^2)(V_z+\frac{1}{2}V_{yy})+\frac{1}{2}\pi^2x^2\sigma^2V_{xx}+\pi x V_{xy}\right\}=0,\\ V(T,x,y,z)=F(y,z)U(x).
        \end{cases}
        \end{equation}
        Making the following ansatz 
        $$V(t,x,y,z)=\frac{1}{\gamma}x^{\gamma}\exp\left\{P(t,z)+\frac{\mu_0y+\frac{1}{2}\sigma_0^2y^2}{\sigma_0^2z+\frac{\sigma_0^2\gamma}{\sigma^2(\gamma-1)}(T-t)+1}\right\}$$ and the transformed function $\Gamma$ by
        $$\Gamma(t,u)=P(t,u-\frac{\gamma(T-t)}{\sigma^2(\gamma-1)})+\frac{1}{2}\log(\sigma_0^2u+1)-\frac{\mu_0^2}{2\sigma_0^2}\frac{1}{\sigma_0^2u+1},$$
        %($V_{xx}$ term creates $\gamma-1$.)
        we obtain the equation for $\Gamma$:\begin{equation*}\begin{cases}
        \Gamma_t(t,u)+\frac{\sigma_0^2\gamma}{2\sigma^2(\gamma-1)}\frac{1}{\sigma_0^2u+1}+\widetilde{k}^*(\Gamma_u(t,u))=0,\\
        \Gamma(T,u)=-\frac{\mu_0^2}{2\sigma_0^2},
    \end{cases}\end{equation*} 
    where $k^*$ is the Legendre-Fenchel transform of $k$ :
    $$\widetilde{k}^*(x)\triangleq \gamma k^*(\frac{1}{\gamma}x)-\frac{1}{\sigma^2(\gamma-1)}x.$$
        In this case, the essential properties of $\widetilde{k}^*$ remain and an analogue of Theorem \ref{general_characteristic} remains valid. In particular, the method of characteristics can still be employed to construct a classical solution $\Gamma(t,u)$ to the HJB equation \eqref{CRRA_HJB}.
        %Note that for $\gamma<0$, the solution satisfies $\Gamma_u<0$ while for $\gamma>0$, we have $\Gamma_u>0$. Both are consistent with the assumption $\frac{V_z+\frac{1}{2}V_{yy}}{V_x}\ge 0$.

    \begin{appendices}
    \section{Proof of Theorem \ref{value_function}}\label{TL_proof}
    In this appendix, we derive an explicit solution to the free-boundary problem \eqref{18}. We begin analyzing the following quasi-linear partial differential equation:
		\begin{align}\label{14}
			\begin{cases} h_t+a(h_z+\frac{1}{2}h_{yy})+\frac{b}{2}h_y^2+c=0,& (t,y,z)\in [0,T)\times \mathbb{R}\times (-\frac{1}{\sigma_0^2},+\infty),\\
				 h(T,y,z)=-\frac{1}{2}\log(z\sigma_0^2+1)+\frac{y^2\sigma_0^2+2y\mu_0-z\mu_0^2}{2(z\sigma_0^2+1)},& (y,z)\in\mathbb{R}\times (-\frac{1}{\sigma_0^2},+\infty),
                 \end{cases}
		\end{align}
        where $a$, $b$, and $c$ are fixed constants.
        \begin{lemma}\label{characteristic_method}
             Equation \eqref{14} admits a classical solution, given by
             $$h(t,y,z)=P(t,z)+\mu_0G(t,z;a-b) y+\frac{1}{2}\sigma_0^2G(t,z;a-b) y^2,$$ 
             where 
        $G(t,z;d)\triangleq\frac{1}{\sigma_0^2z+d\sigma_0^2(T-t)+1}$,
    \begin{align*}
     P(t,z)=\begin{cases}
    \frac{1}{2}\log G(t,z;a)-\frac{\mu_0^2G(t,z;a)}{2}(z+a(T-t))+c(T-t)&\\
    \quad +\frac{a}{2b}\log\frac{G(t,z;a-b)}{G(t,z;a)}+\frac{\mu_0^2}{2\sigma_0^2}(G(t,z;a-b)-G(t,z;a)),& b\neq 0,\\\frac{1}{2}\log G(t,z;a)-\frac{\mu_0^2G(t,z;a)}{2}(z+a(T-t))+c(T-t)&\\
    \quad +\frac{a\sigma_0^2G(t,z;a)}{2}(T-t),&b=0.
    \end{cases}
    \end{align*}
       % Furthermore, we have $$h_z+\frac{1}{2}h_{yy}+\frac{1}{2}h_{y}^2=
	%\begin{cases}
		 %\sigma_0^2(\frac{a}{4b}-\frac{1}{2})[G(t,z;a)-G(t,z;a-b)], & b\neq 0,\\
			 %-\frac{a\sigma_0^4}{2}G(t,z;a)^2(T-t), & b=0,
			 %\end{cases}$$
		%which does not depend on $y$.
        \end{lemma}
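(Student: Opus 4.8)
The plan is to verify that the proposed $h(t,y,z)$ is a classical solution by the method of characteristics, exploiting the quadratic-in-$y$ ansatz to reduce the quasi-linear PDE \eqref{14} to a decoupled system of ODEs in $t$ (parametrized by the characteristic variable $z$). Concretely, I would substitute the ansatz $h = P(t,z) + Q(t,z)y + \tfrac{1}{2}R(t,z)y^2$ into \eqref{14} and match coefficients of $1$, $y$, and $y^2$. Since $h_y = Q + Ry$ and $h_{yy}=R$, the nonlinear term $\tfrac{b}{2}h_y^2 = \tfrac{b}{2}(Q+Ry)^2$ distributes across all three powers of $y$, while $h_z + \tfrac12 h_{yy} = P_z + \tfrac12 R + (Q_z)y + \tfrac12 R_z y^2$. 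The terminal data already has exactly this quadratic form, fixing $R(T,z) = \tfrac{\sigma_0^2}{\sigma_0^2 z + 1}$, $Q(T,z)=\mu_0 R(T,z)$, and $P(T,z)$ as the remaining scalar part.

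\textbf{The key structural observation} is that the $y^2$-coefficient equation is self-contained:
\begin{equation*}
R_t + a R_z + \tfrac{b}{2}R^2 = 0,
\end{equation*}
a first-order quasi-linear PDE (a Riccati-type transport equation) that I would solve by characteristics $\dot z = a$, i.e.\ $z(s) = z - a(T-s)$ along which $R$ satisfies the scalar ODE $\dot R = \tfrac{b}{2}R^2$. Integrating this with the terminal condition should produce precisely $R(t,z) = \sigma_0^2 G(t,z;a-b)$; the shift from $a$ to $a-b$ in the denominator of $G$ is exactly the signature of the nonlinear $\tfrac b2 R^2$ term feeding into the characteristic flow. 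Once $R$ is known, the $y^1$-coefficient equation $Q_t + a Q_z + b R Q = 0$ is \emph{linear} in $Q$, and along the same characteristics one checks that $Q = \mu_0 R$ persists (the ratio $Q/R$ is conserved since both the terminal data and the linear structure respect it). Finally, the $y^0$-coefficient gives $P_t + a(P_z + \tfrac12 R) + \tfrac b2 Q^2 + c = 0$, which is a linear transport equation for $P$ with a known inhomogeneity (now that $R,Q$ are explicit); integrating the source term along characteristics yields the stated two-case formula for $P(t,z)$, with the $b\neq 0$ versus $b=0$ dichotomy arising from whether the $\log\frac{G(\cdot;a-b)}{G(\cdot;a)}$ term degenerates.

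\textbf{I expect the main obstacle} to be the $P$-integration rather than the $R$ or $Q$ steps: after substituting the explicit $R$ and $Q$ back into the source $a\cdot\tfrac12 R + \tfrac b2 Q^2 = \tfrac12(\sigma_0^2 a + b\mu_0^2 R)\,G(\cdot;a-b)$, one must integrate $\int_t^T$ of terms of the form $G(s,z;a)$ and $G(s,z;a-b)$ along the characteristic $z(s)=z-a(T-s)$, producing logarithms $\tfrac{1}{2}\log G(t,z;a)$ and the correction $\tfrac{a}{2b}\log\frac{G(t,z;a-b)}{G(t,z;a)}$ together with the rational term $\tfrac{\mu_0^2}{2\sigma_0^2}(G(\cdot;a-b)-G(\cdot;a))$. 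Keeping the bookkeeping of which denominator $(a$ vs.\ $a-b)$ each factor carries along the flow is where sign and constant errors are easiest to make, and verifying that the $b\to 0$ limit of the $b\neq 0$ formula matches the $b=0$ branch provides a useful consistency check. After assembling $h=P+Qy+\tfrac12 Ry^2$, I would close the proof by direct substitution back into \eqref{14}, confirming the terminal condition and the required smoothness $h\in C^{1,2,1}$ on $[0,T]\times\mathbb{R}\times(-\tfrac1{\sigma_0^2},\infty)$, which holds since $G$ stays bounded and positive on this domain.
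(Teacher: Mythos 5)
Your plan is exactly the paper's proof: substitute the quadratic ansatz $h=P+Qy+\tfrac12 Ry^2$ into \eqref{14}, match powers of $y$ to obtain a triangular system of three first-order PDEs sharing the characteristic lines $z=a(t-T)+z_0$, solve the self-contained Riccati transport equation for $R$ first, then the linear equations for $Q$ and $P$ along the same characteristics, and close by direct substitution and a smoothness check. Your observation that the ratio $Q/R$ is conserved along characteristics is in fact a slightly cleaner way to dispatch the $Q$-step than the paper's terse ``solve similarly,'' and the $b\to 0$ consistency check between the two branches of $P$ is a sensible addition.

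However, the displayed equations contain constant slips that, taken literally, break the key step. Matching $y^2$ coefficients gives $\tfrac12 R_t+\tfrac a2 R_z+\tfrac b2 R^2=0$, i.e.\ $R_t+aR_z+bR^2=0$: the $\tfrac12$ from the ansatz term $\tfrac12 Ry^2$ cancels against the $\tfrac b2$ of the nonlinearity, so your equation $R_t+aR_z+\tfrac b2R^2=0$ (equivalently $\dot R=\tfrac b2R^2$ along characteristics) is off by a factor of two; integrating it produces $R=\sigma_0^2G\left(t,z;a\pm\tfrac b2\right)$ (sign depending on your time orientation), not the claimed $\sigma_0^2G(t,z;a-b)$, and your concluding substitution check would then fail. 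The correct along-characteristic ODE, with $s$ running forward to $T$, is $\dot R=-bR^2$, and this does yield the $a-b$ shift. Likewise the conserved ratio is $Q/R=\mu_0/\sigma_0^2$, since $Q(T,z)=\mu_0/(\sigma_0^2z+1)=\tfrac{\mu_0}{\sigma_0^2}R(T,z)$; thus $Q=\mu_0G(t,z;a-b)$, not $Q=\mu_0R$, and the extra $\sigma_0^2$ propagates into your source term for $P$, which should read $\tfrac a2\sigma_0^2G+\tfrac b2\mu_0^2G^2$ rather than $\tfrac12\left(\sigma_0^2a+b\mu_0^2R\right)G$. These are fixable bookkeeping errors rather than structural ones --- precisely the hazard you flagged yourself --- but as written the derivation does not reproduce the formulas asserted in the lemma.
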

	
 \begin{proof} We consider the solution of the quadratic form  $h(t,y,z)=P(t,z)+Q(t,z)y+\frac{1}{2}R(t,z)y^2$. Substituting into Eq.\eqref{14} and letting the coefficients of all powers of $y$ be equal to zero, we obtain the following first-order PDE system:
        \begin{align}\label{15}
		&\begin{cases}
		P_t(t,z)+aP_z(t,z)+\frac{a}{2}R(t,z)+\frac{b}{2}Q^2(t,z)+c=0,\\
		 P(T,z)=-\frac{1}{2}\log(z\sigma_0^2+1)-\frac{z\mu_0^2}{2(z\sigma_0^2+1)},
				\end{cases}\\
	\label{16}
			&\begin{cases}
			Q_t(t,z)+aQ_z(t,z)+bQ(t,z)R(t,z)=0,\\
			Q(T,z)=\frac{\mu_0}{z\sigma_0^2+1},
				\end{cases}\\
	\label{17}
			&\begin{cases}
			R_t(t,z)+aR_z(t,z)+bR^2(t,z)=0,\\
		 R(T,z)=\frac{\sigma_0^2}{z\sigma_0^2+1}.
				\end{cases}
		\end{align}
        These equations can be solved using the method of characteristics. For instance, the characteristic lines for Eq.\eqref{17} are given by $$z=a(t-T)+z_0,$$
        where $z_0$ is a constant to be determined. The equation restricted to a single characteristic line is reduced to an ODE: $$R^\prime(t)+bR^2=0,\;\;t\in [0,T);\qquad R(T)=\frac{\sigma_0^2}{z_0\sigma_0^2+1},$$ whose solution is $$R(t)=\frac{\sigma_0^2}{\sigma_0^2z_0-b\sigma_0^2(T-t)+1},\quad t\in[0,T].$$ 
        Substituting $z_0=z+a(T-t)$ gives the expression $R(t,z)=\sigma_0^2G(t,z;a-b)$. By using the same way, Eqs. \eqref{15} and \eqref{16} can be solved (notice that all three equations have the same characteristic lines).
        \end{proof}
		
        We now return to Eq.\eqref{18}. It is readily observed that $h_z+\frac{1}{2}h_{yy}+\frac{1}{2}h_{y}^2+c\gamma >0$ at $t=T$. Therefore, near the terminal time, we have
        \begin{align}\label{right_equation}
			\begin{cases}  h_t+\frac{1}{\sigma^2}(h_z+\frac{1}{2}h_{yy})=0,\\
				 h(T,y,z)=-\frac{1}{2}\log(z\sigma_0^2+1)+\frac{y^2\sigma_0^2+2y\mu_0-z\mu_0^2}{2(z\sigma_0^2+1)}.\end{cases}
		\end{align}
		By applying Lemma \ref{characteristic_method}, the solution to \eqref{right_equation} is given explicitly by
        \begin{align}\label{right_solution}
			\nonumber h(t,y,z)&=\frac{1}{2}\log H(t,z)-\frac{H(t,z)}{2}\left[\mu_0^2z+\frac{\mu_0^2-\sigma_0^2}{\sigma^2}(T-t)\right]\\&+H(t,z)\left[\mu_0y+\frac{1}{2}\sigma_0^2y^2\right].
		\end{align}
        The associated partial derivatives satisfy $$h_z+\frac{1}{2}h_{yy}+\frac{1}{2}h_{y}^2=
			 -\frac{\sigma_0^4}{2\sigma^2}H(t,z)^2(T-t).$$
        The solution evolves along the characteristic lines $z=\frac{1}{\sigma^2}(t-T)+z_0,\;(z_0>-\frac{1}{\sigma_0^2})$ until it hits the free boundary, which is determined by $$h_z+\frac{1}{2}h_{yy}+\frac{1}{2}h_{y}^2+c\gamma=0.$$
        Direct calculation yields the explicit form of the free boundary 
        \begin{equation*}
            z=-\frac{T-t}{\sigma^2}-\frac{1}{\sigma_0^2}+\sqrt{\frac{T-t}{2c\sigma^2\gamma}}\triangleq\delta(t).
        \end{equation*}
         Furthermore, because 
         \begin{equation*}
        \delta'(t)=\frac{1}{\sigma^2}-\frac{1}{4c\sigma^2\gamma}\left(\frac{T-t}{2c\sigma^2\gamma}\right)^{-1/2}<\frac{1}{\sigma^2},
        \end{equation*}
        we see that every point in the domain $\{z\ge \delta(t)\vee0\}\cap\{0\le t\le T\}$ is crossed by exactly one characteristic line of the form $z=\frac{1}{\sigma^2}(t-T)+z_0$. 

        We would expect \begin{align}\label{left_equation}
			\begin{cases}  h_t+(\frac{1}{\sigma^2}+\beta^2)(h_z+\frac{1}{2}h_{yy})+\frac{\beta^2}{2}h_{y}^2+\beta^2c\gamma=0,\\
				 h(t,y,\delta(t))=\frac{1}{2}\log H(t,\delta(t))-\frac{H(t,\delta(t))}{2}\left[\mu_0^2\delta(t)+\frac{\mu_0^2-\sigma_0^2}{\sigma^2}(T-t)\right]\\
			\qquad\qquad\qquad+H(t,\delta(t))\left[\mu_0y+\frac{1}{2}\sigma_0^2y^2\right],\end{cases}
		\end{align}
        as well as $h_z+\frac{1}{2}h_{yy}+\frac{1}{2}h_{y}^2+c\gamma \le0$ to hold in $\{0\le z<\delta(t)\}\cap\{0\le t<T\}$, if this region is not empty. Observe from Lemma \ref{characteristic_method} that the coefficients of the linear and quadratic terms in $y$ depend only on $a-b$, which remains unchanged between Eq.\eqref{right_equation} and Eq.\eqref{left_equation}. Therefore, we conjecture that the solution retains the form$$h(t,y,z)=P(t,z)+H(t,z)\left[\mu_0y+\frac{1}{2}\sigma_0^2y^2\right]$$ 
        in $\{0\le z<\delta(t)\}\cap\{0\le t<T\}$. Substituting this expression into Eq.\eqref{left_equation}, we obtain an equation that $P(t,z)$ should satisfy
        \begin{align}\label{19}
			\begin{cases}
			 P_t(t,z)+(\frac{1}{\sigma^2}+\beta^2)P_z(t,z)+\frac{1}{2}(\frac{1}{\sigma^2}+\beta^2)\sigma_0^2H(t,z)+\frac{\beta^2}{2}\mu_0^2H^2(t,z)+\beta^2c\gamma=0,\\
				 P(t,\delta(t))=\frac{1}{2}\log H(t,\delta(t))-\frac{H(t,\delta(t))}{2}\left[\mu_0^2\delta(t)+\frac{\mu_0^2-\sigma_0^2}{\sigma^2}(T-t)\right].\end{cases}
		\end{align}
    Let $(t_0,z_0)\in \{0\le z<\delta(t)\}\cap\{0\le t<T\}$ be a fixed point. The characteristic line for Eq.\eqref{19} is given by $z=(\frac{1}{\sigma^2}+\beta^2)(t-t_0)+z_0$. Denote by $(t_1,\delta(t_1))$ intersection point between this characteristic line and the free boundary, i.e., $t_1$ is the solution of 
    $$\left(\frac{1}{\sigma^2}+\beta^2\right)(t_1-t_0)+z_0=-\frac{T-t_1}{\sigma^2}-\frac{1}{\sigma_0^2}+\sqrt{\frac{T-t_1}{2c\sigma^2\gamma}}.$$ 
    Using the same method as in Lemma \ref{characteristic_method}, we obtain
        \begin{align}\label{sol_left}
    	\nonumber P(t_0,z_0)=\;&P(t_1,\delta(t_1))+\beta^2c\gamma(t_1-t_0)-\frac{\mu_0^2}{2\sigma_0^2}\left[H(t_1,\delta(t_1))-H(t_0,z_0)\right]\\
			&-\frac{\beta^2+\frac{1}{\sigma^2}}{2\beta^2}\left[\log H(t_1,\delta(t_1))-\log H(t_0,z_0)\right].
		\end{align}
        Using the fact that $\frac{\partial t_1}{\partial t_0}+(\frac{1}{\sigma^2}+\beta^2)\frac{\partial t_1}{\partial z_0}=0,$ we can readily verify that the function $P(t,z)$ defined above actually solves Eq.\eqref{left_equation} in $\{0\le z<\delta(t)\}\cap\{0\le t<T\}$. We prove next that $h_z+\frac{1}{2}h_{yy}+\frac{1}{2}h_{y}^2+c\gamma\le 0$ holds in this region. 
        
        Using the implicit function theorem, we have $$ \frac{\partial t_1}{\partial z_0}=-\frac{1}{\frac{1}{4c\sigma^2\gamma}(\frac{T-t_1}{2c\sigma^2\gamma})^{-1/2}+\beta^2}.$$ 
        which leads to the following important identity:
        \begin{equation}\label{1}
            \frac{\partial t_1}{\partial z_0}\left(-\delta'(t_1)+\beta^2+\frac{1}{\sigma^2}\right)+1=0.
        \end{equation} 
         Define $p(t)\triangleq P(t,\delta(t))$, then direct calculation from \eqref{19} gives \begin{align}
			p'(t_1)=-\frac{H(t_1,\delta(t_1))^2}{2}\cdot&\left[\sigma_0^4\delta(t_1)\delta'(t_1)+\frac{2\sigma_0^4}{\sigma^2}\delta'(t_1)(T-t_1)\right.\nonumber\\
            &+\left.\sigma_0^2\delta'(t_1)+\mu_0^2\delta'(t_1)-\frac{\sigma_0^4}{\sigma^4}(T-t_1)-\frac{\mu_0^2}{\sigma^2}\right].
		\end{align}
        Moreover, from the definition of the free boundary, we have \begin{equation}\label{3}
            c\gamma=\frac{1}{2\sigma^2}H(t_1,\delta(t_1))^2\sigma_0^4(T-t_1).
        \end{equation}
        Combining the Eqs. \eqref{sol_left}--\eqref{3}, with a little bit of algebra, we can verify that \begin{align}\label{boundary_derivative}
			&\nonumber \left.h_z+\frac{1}{2}h_{yy}+\frac{1}{2}h_y^2+c\gamma\right|_{(t_0,z_0)}\\
			&\nonumber =P_z(t_0,z_0)+H_z(t_0,z_0)\left[\mu_0y+\frac{1}{2}\sigma_0^2y^2\right]+\frac{\sigma_0^2}{2}H(t_0,z_0)+\frac{1}{2}H(t_0,z_0)^2\left[\mu_0+\sigma_0^2y\right]^2+c\gamma\\
			&\nonumber =p'(t_1)\frac{\partial t_1}{\partial z_0}+\beta^2 c\gamma\frac{\partial t_1}{\partial z_0}+\frac{\mu_0^2}{2\sigma_0^2}\left[\left(\sigma_0^2+\sigma_0^2\beta^2\frac{\partial t_1}{\partial z_0}\right){H(t_1,\delta(t_1))^2}-\sigma_0^2{H(t_0,z_0)^2}\right]\\
			&\nonumber \hphantom{H(t_0,z_0)}+\frac{\beta^2+\frac{1}{\sigma^2}}{2\beta^2}\left[\left(\sigma_0^2+\sigma_0^2\beta^2\frac{\partial t_1}{\partial z_0}\right){H(t_1,\delta(t_1))}-\sigma_0^2{H(t_0,z_0)}\right]-\mu_0\sigma_0^2y H(t_0,z_0)^2\\
			&\nonumber \hphantom{H(t_0,z_0)}-\frac{H(t_0,z_0)^2}{2}\sigma_0^4y^2+\frac{\sigma_0^2H(t_0,z_0)}{2}+\frac{H(t_0,z_0)^2}{2}(\mu_0+\sigma_0^2y)^2+c\gamma\\
			&\nonumber =\frac{H(t_1,\delta(t_1))^2}{2}\left(\frac{\partial t_1}{\partial z_0}\left(-\delta'(t_1)+\beta^2+\frac{1}{\sigma^2}\right)+1\right)\left(\sigma_0^2+\mu_0^2+\sigma_0^4\delta(t_1)+\frac{2\sigma_0^4}{\sigma^2}(T-t_1)\right)\\
			&\nonumber \hphantom{H(t_0,z_0)}+ \frac{\sigma_0^2}{2\beta^2\sigma^2}\left[H(t_1,\delta(t_1))-H(t_0,z_0)\right]\\
			&=\frac{\sigma_0^2}{2\beta^2\sigma^2}\left[H(t_1,\delta(t_1))-H(t_0,z_0)\right]\le 0.
		\end{align}
        %(In the third equality, the term $\frac{\beta^2+\frac{1}{\sigma^2}}{2\beta^2}\left(\sigma_0^2+\sigma_0^2\beta^2\frac{\partial t_1}{\partial z_0}\right){H(t_1,\delta(t_1))}$$ is devided into two parts.)
        Notice that we have also shown that $h_z+\frac{1}{2}h_{yy}+\frac{1}{2}h_y^2+c\gamma|_{(t_0,z_0)}\rightarrow 0$ as $(t_0,z_0)\rightarrow (t_1,\delta(t_1))$.

        It remains to show that the partial derivatives of $h(t,y,z)$ are continuous on the free boundary. To this end, we examine the directional derivatives. Take a point $(t_0,\delta(t_0))$ from the free boundary and another point $(t_0,z_0)\in \{0\le z<\delta(t)\}\cap\{0\le t<T\}$. As $z_0\rightarrow \delta(t_0)$, the implicit function theorem implies $$\frac{t_1-t_0}{\delta(t_0)-z_0}\rightarrow \frac{1}{\frac{1}{4c\sigma^2\gamma}(\frac{T-t_0}{2c\sigma^2\gamma})^{-1/2}+\beta^2},$$
        where $(t_1,\delta(t_1))$ is the point on the free boundary associated with $(t_0,z_0)$ via the characteristic line.
        Using this limiting behavior, we compute the left partial derivative:
        \begin{align*}
			P_{z-}(t_0,\delta(t_0))&\equiv\underset{z_0\rightarrow\delta(t_0)-}{\lim}\frac{P(t_0,\delta(t_0))-P(t_0,z_0)}{\delta(t_0)-z_0}\\
			&=\underset{z_0\rightarrow\delta(t_0)-}{\lim}\left\{\frac{P(t_0,\delta(t_0))-P(t_1,\delta(t_1))}{\delta(t_0)-z_0}-\beta^2c\gamma\frac{t_1-t_0}{\delta(t_0)-z_0}\right.\\
			& \hphantom{\lim}+\frac{\mu_0^2(t_1-t_0)}{2\sigma_0^2(\delta(t_0)-z_0)}\cdot\frac{H(t_1,\delta(t_1))-H(t_0,z_0)}{t_1-t_0}\\
			&\hphantom{\lim}+\left.\frac{(\beta^2+\frac{1}{\sigma^2})(t_1-t_0)}{2\beta^2(\delta(t_0)-z_0)}\cdot\frac{\log H(t_1,\delta(t_1))-\log H(t_0,z_0)}{t_1-t_0}\right\}\\
			& = -p'(t_0)\frac{1}{\frac{1}{4c\sigma^2\gamma}(\frac{T-t_0}{2c\sigma^2\gamma})^{-1/2}+\beta^2}-\frac{\beta^2c\gamma}{\frac{1}{4c\sigma^2\gamma}(\frac{T-t_0}{2c\sigma^2\gamma})^{-1/2}+\beta^2}\\
			&\hphantom{\lim}- \frac{\beta^2\sigma_0^2 }{\frac{1}{4c\sigma^2\gamma}(\frac{T-t_0}{2c\sigma^2\gamma})^{-1/2}+\beta^2}\left[\frac{\mu_0^2}{2\sigma_0^2}H(t_0,\delta(t_0))^2+\frac{\beta^2+\frac{1}{\sigma^2}}{2\beta^2}H(t_0,\delta(t_0))\right].
		\end{align*}
        Following the same procedure as in Eq.\eqref{boundary_derivative}, we can directly verify that \begin{equation}\label{left_derivative}
            h_{z-}+\frac{1}{2}h_{yy}+\frac{1}{2}h_y^2+c\gamma=0
        \end{equation} holds on the free boundary. 
        
        Using the explicit expression \eqref{right_solution}, we see that Eq.\eqref{left_derivative} holds with $h_{z-}$ replaced by $h_{z+}$. Therefore, the partial derivatives $h_z$ and $P_z$ are well-defined and continuous across the free boundary.
        Similarly, using \eqref{right_solution} and \eqref{sol_left} again, we see that for each $z_0>0$, the mapping $$t\mapsto P\left(t,z_0+(t-T)\left(\frac{1}{\sigma^2}+\beta^2\right)\right)$$ 
            has a well-defined and continuous derivative in the domain $\{t\in [0,T):z_0+(t-T)(\frac{1}{\sigma^2}+\beta^2)\ge 0\}$.
            %(Very hard to verify. In fact, compare the equation at the two sides of the free boundary, adding or deleting a few terms involving only $h_y,h_{yy}$, would be a smarter choice.)
            Hence, $P$ has directional derivatives in two linearly independent directions, both of which are continuous. We conclude that $P\in C^1$, and consequently, $h\in C^{1,2,1}$.

\section{Proof of Theorem \ref{verification_theorem}}\label{verif}
        The admissibility of $(\theta^*,\pi^*)$ is immediate. Let $(\widehat{\vartheta}^*,\widehat{\varpi}^*)$ be the realization of  $(\theta^*,\pi^*)$ by $(\widehat W,\widehat B)$. By Theorem \ref{recovery}, it suffices to show that $(\widehat{\vartheta}^*,\widehat{\varpi}^*)$ constitutes an optimal solution of Problem \eqref{Q_value_function}. 
        
        For each $(\widehat{\vartheta},\widehat{\varpi})\in\widehat{\mathcal{A}}$, denote by $\{\widehat{X}_t\}$, $\{\widehat{Y}_t\}$,$\{\widehat{Z}_t\}$  the corresponding state processes with initial conditions $\widehat{X}_0=x, \widehat{Y}_0=0, \widehat{Z}_0=0$, respectively. Consider an exponential supermartingale $\zeta=\{\zeta_t,0\le t\le T\}$, initialized at $\zeta_0=1$ and satisfying the SDE:
        $$\d\zeta_t=\zeta_t\left[\left(\underbrace{\frac{1}{\sigma}\frac{\mu_0+\sigma_0^2 \widehat{Y}_t}{\sigma_0^2 \widehat{Z}_t+\frac{\sigma_0^2}{\sigma^2}(T-t)+1}-\gamma\sigma\widehat{\varpi}_t}_{\phi_t}\right)\d\widehat{W}_t+\underbrace{\widehat{\vartheta}_t\frac{\mu_0+\sigma_0^2 \widehat{Y}_t}{\sigma_0^2 \widehat{Z}_t+\frac{\sigma_0^2}{\sigma^2}(T-t)+1}}_{\varphi_t}\d\widehat{B}_t\right].$$
        %(Inside the bracket, is the additional term / V)
        It is clear that $\E^{\widehat{\P}}[\zeta_T]\le 1$. Let $V$ be the solution to the HJB equation provided in Corollary \ref{cor:V}. An application of Itô's formula yields
        \begin{align*}      &\d(V(t,\widehat{X}_t,\widehat{Y}_t,\widehat{Z}_t)\zeta_t^{-1})\\
            &=\zeta_t^{-1}\left[V_t-k(\widehat{\vartheta}_t^2)V_x+\left(\frac{1}{\sigma^2}+\widehat{\vartheta}_t^2\right)\left(V_z+\frac{1}{2}V_{yy}\right)+\frac{1}{2}\widehat{\varpi}_t^2\sigma^2V_{xx}+\widehat{\varpi}_t V_{xy}\right]\d t\le 0,
        \end{align*}
        where equality holds if and only if $\widehat{\varpi}=\widehat{\varpi}^*$ and $\widehat{\vartheta}=\widehat{\vartheta}^*$.
        
        Using the terminal condition and taking expectation under $\widehat{\P}$, we derive $$\E^{\widehat{\P}}[\zeta_T]V(0,x,0,0)\ge \E^{\widehat{\P}}[V(T,\widehat{X}_T,\widehat{Y}_T,\widehat{Z}_T)]=\E^{\widehat{\P}}[F(\widehat{Y}_T,\widehat{Z}_T)U(\widehat{X}_T)],$$
        where the equality holds if and only if $\widehat{\varpi}=\widehat{\varpi}^*$ and $\widehat{\vartheta}=\widehat{\vartheta}^*$. Therefore, to establish the optimality as well as the uniqueness of $(\widehat{\vartheta}^*,\widehat{\varpi}^*)$, it remains to prove that $\E^{\widehat{\P}}[\zeta_T]=1$ for all  $(\widehat{\vartheta},\widehat{\varpi})\in\widehat{\mathcal{A}}$. 
        
        To verify Novikov's condition, consider the expectation $$\E^{\widehat{\P}}\left[ \exp\left\{ \frac{1}{2}\int_{t_1}^{t_2}\left(\phi_t^2+\varphi_t^2\right)\d t\right\}\right],$$
        where $0\le t_1<t_2\le T$.
        Throughout this proof, $A>0$ is a generic constant whose value may vary from line to line, but is always independent of $t_1$ and $t_2$. Let $f_s=f(s,\widehat{W},\widehat{B})$ and $g_s=g(s,\widehat{W},\widehat{B})$, where $f$ and $g$ are given in the definition of $\widehat{\mathcal{A}}$. Using  $\widehat{Y}_t=\frac{1}{\sigma}\widehat{W}_t+\int_0^t\widehat{\vartheta}_s\d \widehat{B}_s$ and the boundedness of $\widehat{\vartheta}_t$, 
        the above expectation is dominated by 
        \begin{align*}
    E(t_1,t_2)\triangleq\E^{\widehat{\P}}&\left[ \exp\left\{\int_{t_1}^{t_2} A\left(1+\underset{0\le t\le T}{\sup}\widehat{W}^2_t+\underset{0\le t\le T}{\sup}\left(\int_0^t\widehat{\vartheta}_s\d \widehat{B}_s\right)^2\right.\right.\right.\\
              &+\left.\left.\left.\underset{0\le t\le T}{\sup}\left(\int_0^tf_s\d \widehat{W}_s\right)^2+\underset{0\le t\le T}{\sup}\left(\int_0^tg_s\d \widehat{B}_s\right)^2\right)\d t\right\}\right].
        \end{align*}
    Applying Hölder’s inequality yields
        \begin{align*}
             E(t_1,t_2)^4 &\le A \E^{\widehat{\P}} \left[ \exp\left\{A(t_2-t_1)\underset{0\le t\le T}{\sup}\widehat{W}^2_t\right\}\right]\times\E^{\widehat{\P}}\left[\exp\left\{A(t_2-t_1)\underset{0\le t\le T}{\sup}\left(\int_0^t\widehat{\vartheta}_s\d \widehat{B}_s\right)^2\right\}\right]\\&\times\E^{\widehat{\P}}\left[\exp\left\{A(t_2-t_1)\underset{0\le t\le T}{\sup}\left(\int_0^tf_s\d \widehat{W}_s\right)^2\right\}\right]
            \times\E^{\widehat{\P}}\left[\exp\left\{A(t_2-t_1)\underset{0\le t\le T}{\sup}\left(\int_0^tg_s\d \widehat{B}_s\right)^2\right\}\right].
        \end{align*}
        For the first term, using the reflection principle of Brownian motion, we have
        \begin{align*}
            \E^{\widehat{\P}}\left[\exp\left\{A(t_2-t_1)\underset{0\le t\le T}{\sup}\widehat{W}_t^2\right\}\right]
             &=1+\int_0^{+\infty}A(t_2-t_1)e^{A(t_2-t_1)x}\widehat{\P}\left(\underset{0\le t\le T}{\sup}\widehat{W}_t^2\ge x\right)\d x\\
            & \le 1+2\int_0^{+\infty}A(t_2-t_1)e^{A(t_2-t_1)x}\widehat{\P}\left(\underset{0\le t\le T}{\sup}\widehat{W}_t\ge \sqrt{x}\right)\d x\\
            &= 1+4\int_0^{+\infty}A(t_2-t_1)e^{A(t_2-t_1)x}\widehat{\P}\left(\widehat{W}_T\ge \sqrt{x}\right)\d x\\
            & =1+4\int_0^{+\infty}A(t_2-t_1)e^{A(t_2-t_1)x}\left[1-\Phi\left(\sqrt{\frac{x}{T}}\right)\right]\d x\\
            & \le 1+4\int_0^{+\infty}A(t_2-t_1)e^{A(t_2-t_1)x}\sqrt{\frac{T}{2\pi x}}e^{-x/2T}\d x<+\infty,
        \end{align*}
        provided that $t_2-t_1$ is sufficiently small (note that $A$ is independent of the choice of $t_1$ and $t_2$). The remaining terms, take  $\E^{\widehat{\P}}\left[\exp\left\{A(t_2-t_1)\underset{0\le t\le T}{\sup}\left(\int_0^t\widehat{\vartheta}_s\d \widehat{B}_s\right)^2\right\}\right]$ for an example, can be treated similarly. This is because the stochastic integral $\int_0^t\widehat{\vartheta}_s\d \widehat{B}_s$ can be viewed as a time-changed Brownian motion ${B}_{\int_0^t \widehat{\vartheta}_s^2\d s}'$. Here ${B'}$ is a $\widehat{\P}$-Brownian motion and $\underset{0\le t\le T}{\sup}(\int_0^t\widehat{\vartheta}_s\d \widehat{B}_s)^2$ is dominated by $\underset{0\le t\le CT}{\sup}({B}_t')^2$, where $C$ is the constant in the admissibility condition. Thus, based on \citet[Corollary 3.5.14]{Karatzas1991}, we have $\E^{\widehat{\P}}[\zeta_T]=1$.
        
        \section{Proof of Theorem \ref{general_characteristic}}\label{GC_proof}

        The regularity condition $k''(0)>0$ ensures that the convex conjugate ${k}^*$ can be extended to a $C^2$ function defined on the whole real line.
        %($(k^*)''(0+)$ exists implies $(k^*)'(0+)$ exists. Extend with second order polynomials.)
        For notational convenience, we will henceforth assume that ${k}^*\in C^2(\mathbb{R})$.

        To simplify the analysis, we consider the time reversed equation \begin{equation}\label{time_rev}\begin{cases}
        \widetilde{\Gamma}_t(t,u)-\frac{\sigma_0^2}{2\sigma^2}\frac{1}{\sigma_0^2u+1}-\widetilde{k}^*(\widetilde{\Gamma}_u(t,u))=0,\\
        \widetilde{\Gamma}(0,u)=-\frac{\mu_0^2}{2\sigma_0^2},
    \end{cases}\end{equation}
    in which the Hamiltonian $$H(t,x,p)=-\frac{\sigma_0^2}{2\sigma^2}\frac{1}{\sigma_0^2x+1}-\widetilde{k}^*(p)$$
    is twice continuously differentiable. We solve this equation via the classical method of characteristics. The corresponding characteristic system of Eq.\eqref{time_rev} is given by
    \begin{equation}\label{characteristic_system}
        \begin{cases}
         \widetilde{\Gamma}_t'(s)=0,\\
         \widetilde{\Gamma}_u'(s)=-\frac{\sigma_0^4}{2\sigma^2}\frac{1}{(\sigma_0^2 u(s)+1)^2},\\
         \widetilde{\Gamma}'(s)=-\widetilde{\Gamma}_u(s)(\widetilde{k}^*)'(\widetilde{\Gamma}_u(s))+\frac{\sigma_0^2}{2\sigma^2}\frac{1}{\sigma_0^2u(s)+1}+\widetilde{k}^*(\widetilde{\Gamma}_u(s)),\\
         t'(s)=1,\\
         u'(s)=-(\widetilde{k}^*)'(\widetilde{\Gamma}_u(s)),
         \end{cases}
    \end{equation}
    where $(t(s),u(s))$ is the characteristic curve and $\widetilde{\Gamma}(s)\triangleq \widetilde{\Gamma}(t(s),u(s))\;(\widetilde{\Gamma}_u(s),\widetilde{\Gamma}_t(s))$ represents the value (partial derivatives) of the solution $\widetilde{\Gamma}$ along the characteristic curve. 
    
    At initial time $s=0$, the characteristic curve starts from a boundary point $(t(0),u(0))=(0,u_0)$, where $u_0$ is a constant to be determined. Therefore, the initial conditions for the system  \eqref{characteristic_system} are
 $$\widetilde{\Gamma}_t(0)=\frac{\sigma_0^2}{2\sigma^2}\frac{1}{\sigma_0^2u_0+1},\qquad \widetilde{\Gamma}_u(0)=0,\qquad \widetilde{\Gamma}(0)=-\frac{\mu_0^2}{2\sigma^2},\qquad t(0)=0,\qquad u(0)=u_0.$$

    We now isolate the essential part of the characteristic system, known as Hamilton’s ODE:
    \begin{equation}\label{31}
        \begin{cases}
         \widetilde{\Gamma}_u'(s)=-\frac{\sigma_0^4}{2\sigma^2}\frac{1}{(\sigma_0^2 u(s)+1)^2},& \widetilde{\Gamma}_u(0)=0,\\
         u'(s)=-(\widetilde{k}^*)'(\widetilde{\Gamma}_u(s)),& u(0)=u_0.
         \end{cases}
    \end{equation}
    We now state the main result, which implies the validity of Theorem \ref{general_characteristic}.
    \begin{theorem}
        Under Assumption \ref{ass:k:convex} and the additional assumption $k''(0)>0$, for each $u_0>0$, the system of ODEs \eqref{31} admits a unique local solution $(\widetilde{\Gamma}_u^{u_0}(\cdot), u^{u_0}(\cdot))$ such that $$(\widetilde{\Gamma}_u(s),u(s))\in (-\infty,+\infty)\times \left(-\frac{1}{2\sigma_0^2},+\infty\right).$$ The intersection of the maximal interval of existence and the positive time line $[0,+\infty)$ is denoted by $[0,t_0(u_0))$ and abbreviated as $[0,t_0)$ when there is no confusion. Denote by $\widetilde{\Gamma}^{u_0}_t(\cdot)$ and $\widetilde{\Gamma}^{u_0}(\cdot)$ the corresponding solution to  the  system \eqref{characteristic_system} for $s\in [0,t_0)$.
        
    For each $(t,u)\in [0,T]\times [0,+\infty)$, there exists a unique $u_0\ge0$ such that $t<t_0(u_0)$ and $u=u^{u_0}(t)$. Furthermore, $u_0$ is a $C^1 -$ function of $(t,u)$. Define \begin{equation*}
            \widetilde{\Gamma}(t,u)\triangleq \widetilde{\Gamma}^{u_0}(t)=-\frac{\mu_0^2}{2\sigma^2}+\int_0^t\left\{ -\widetilde{\Gamma}_u^{u_0}(s)(\widetilde{k}^*)'(\widetilde{\Gamma}_u^{u_0}(s))+\frac{\sigma_0^2}{2\sigma^2}\frac{1}{\sigma_0^2u^{u_0}(s)+1}+\widetilde{k}^*(\widetilde{\Gamma}_u^{u_0}(s))\right\}\d s.
        \end{equation*}
        Then $\widetilde{\Gamma}(t,u)\in C^2((0,T]\times[0,+\infty))\cap C([0,T]\times[0,+\infty))$ is the unique $C^2$-classical solution to Eq.\eqref{time_rev}, which satisfies $\widetilde{\Gamma}_u(t,u)\le 0$ for all $(t,u)\in(0,T]\times[0,+\infty)$, respectively.
    \end{theorem}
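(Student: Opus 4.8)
The plan is to carry out a classical method-of-characteristics construction for the time-reversed equation \eqref{time_rev}, exploiting the regularity $\widetilde{k}^*\in C^2(\mathbb{R})$ secured by the hypothesis $k''(0)>0$. First I would settle the local theory for Hamilton's ODE \eqref{31}: since the right-hand sides are $C^1$ in $(\widetilde{\Gamma}_u,u)$ on the strip $\{u>-\tfrac{1}{2\sigma_0^2}\}$, where the factor $(\sigma_0^2u+1)^{-2}$ is smooth, Picard--Lindel\"of gives a unique maximal solution $(\widetilde{\Gamma}_u^{u_0},u^{u_0})$ for each $u_0>0$. I then extract the qualitative behaviour that drives everything else. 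Because $\widetilde{\Gamma}_u(0)=0$ and $\widetilde{\Gamma}_u'(s)=-\tfrac{\sigma_0^4}{2\sigma^2}(\sigma_0^2u+1)^{-2}<0$, we obtain $\widetilde{\Gamma}_u^{u_0}(s)<0$ for all $s>0$, which is precisely the sign assertion $\widetilde{\Gamma}_u\le0$. Using $(\widetilde{k}^*)'\ge 0$, together with $(\widetilde{k}^*)'(0)=(k^*)'(0)=(k')^{-1}(0)=0$ since $k'(0)=0$, yields $u'(s)=-(\widetilde{k}^*)'(\widetilde{\Gamma}_u(s))\le0$, so $u^{u_0}$ is strictly decreasing. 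In particular any characteristic reaching a level $u\ge0$ at a time $t\le T$ has stayed in $\{u\ge0\}$ on $[0,t]$, so $(\sigma_0^2u+1)^{-2}\le1$ there; this bounds $|\widetilde{\Gamma}_u^{u_0}|\le \tfrac{\sigma_0^4T}{2\sigma^2}$ and hence the characteristic slope $|u'|\le (k')^{-1}\!\big(\tfrac{\sigma_0^4T}{2\sigma^2\gamma}\big)$, keeping such characteristics away from the singularity $u=-1/\sigma_0^2$ and guaranteeing $t_0(u_0)>T$ for all sufficiently large $u_0$.

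The heart of the argument is the foliation: showing that $u_0\mapsto u^{u_0}(t)$ is a bijection onto $[0,+\infty)$ for each fixed $t\in[0,T]$, with $C^1$ inverse. I would differentiate \eqref{31} in the initial datum $u_0$, writing $J=\partial u^{u_0}/\partial u_0$ and $K=\partial \widetilde{\Gamma}_u^{u_0}/\partial u_0$, to obtain the linear variational system
\[
K'=\frac{\sigma_0^6}{\sigma^2}\,\frac{J}{(\sigma_0^2u+1)^{3}},\qquad J'=-(\widetilde{k}^*)''(\widetilde{\Gamma}_u)\,K,\qquad J(0)=1,\;K(0)=0.
\]
Since the coefficient $\sigma_0^6\sigma^{-2}(\sigma_0^2u+1)^{-3}$ is strictly positive and $-(\widetilde{k}^*)''\ge0$ by concavity of $\widetilde{k}^*$, the region $\{J>0,\,K\ge0\}$ is forward-invariant: $K$ is pushed strictly positive immediately, and then $J'\ge0$ keeps $J\ge1$. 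Hence $J=\partial u^{u_0}(t)/\partial u_0\ge1>0$, so $u_0\mapsto u^{u_0}(t)$ is strictly increasing; combined with the uniform slope bound (which forces $u^{u_0}(t)\ge u_0-CT\to+\infty$ as $u_0\to+\infty$ and controls the image from below) and continuity in $u_0$, this produces the claimed unique $u_0\ge0$ for every $(t,u)\in[0,T]\times[0,+\infty)$. Non-vanishing of $J$ then lets the implicit function theorem deliver $u_0\in C^1$.

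Finally I would verify that $\widetilde{\Gamma}$ defined by the integral along characteristics is a genuine classical solution and record its regularity. The standard consistency computation of the method of characteristics shows that the spatial gradient of the constructed function equals the transported momentum, $\widetilde{\Gamma}_u(t,u)=\widetilde{\Gamma}_u^{u_0}(t)$, and likewise $\widetilde{\Gamma}_t(t,u)=\widetilde{\Gamma}_t^{u_0}(t)$; since the third component of \eqref{characteristic_system} is exactly \eqref{time_rev} evaluated along each curve, $\widetilde{\Gamma}$ solves the PDE pointwise. The $C^1$ dependence of $u_0$ on $(t,u)$, together with the $C^1$ dependence of the flow of the $C^2$ vector field on its initial data, upgrades $\widetilde{\Gamma}$ to $C^2((0,T]\times[0,+\infty))$, while continuity up to $t=0$ follows from continuous dependence on initial data and the prescribed datum $\widetilde{\Gamma}(0,u)=-\mu_0^2/(2\sigma_0^2)$. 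Uniqueness is then immediate: any $C^2$ solution generates the same characteristic field and must agree with the transported values, hence with the constructed function.

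I expect the foliation step to be the main obstacle, specifically the invariant-region argument pinning down $J\ge1>0$ together with surjectivity of $u_0\mapsto u^{u_0}(t)$ onto $[0,+\infty)$, as this is where both the concavity of $\widetilde{k}^*$ and the $C^2$ regularity forced by $k''(0)>0$ are genuinely needed; the irregular case $k''(0)=0$ is precisely where the continuous differentiability of the flow in its initial data can fail, which is why it is handled separately by regularization in Theorem \ref{limit}.
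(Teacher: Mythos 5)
Your outline reproduces the paper's own proof strategy almost step for step: Picard--Lindel\"of for Hamilton's ODE \eqref{31} on the strip $\{u>-\tfrac{1}{2\sigma_0^2}\}$, the sign and monotonicity of $\widetilde{\Gamma}_u^{u_0}$ and $u^{u_0}$, monotonicity in the initial datum to get injectivity and (via the inverse function theorem) $C^1$-dependence of $u_0(t,u)$, an intermediate value argument for surjectivity, and the strip-condition computation to verify that the transported values solve \eqref{time_rev}. The one genuine variation is how you obtain monotonicity in $u_0$: you run the linear variational system for $(K,J)=(\partial_{u_0}\widetilde{\Gamma}_u^{u_0},\partial_{u_0}u^{u_0})$ and argue forward invariance of $\{J\ge 1,\,K\ge 0\}$, using this single inequality $J\ge 1$ both for injectivity and for the inverse function theorem. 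The paper instead first derives the first integral $\widetilde{k}^*(\widetilde{\Gamma}_u(s))+\tfrac{\sigma_0^2}{2\sigma^2}\bigl(\tfrac{1}{\sigma_0^2u(s)+1}-\tfrac{1}{\sigma_0^2u_0+1}\bigr)\equiv 0$, reduces \eqref{31} to the scalar equation \eqref{32}, proves uniqueness of $u_0$ by the comparison principle, and only then invokes the variational equation for the $C^1$ step; your version is leaner, but the paper's first integral buys something you silently forgo, namely the blow-up characterization $u^{u_0}(s)\to-\tfrac{1}{2\sigma_0^2}$ as $s\to t_0(u_0)$.

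That loss surfaces at exactly one place: surjectivity. Your upper end is fine (for $u_0>u+CT$ the characteristic stays in $\{u\ge 0\}$, the slope bound applies, and $u^{u_0}(t)\ge u_0-CT>u$), but \enquote{controls the image from below} is not an argument. For small $u_0$ the solution may cease to exist before time $t$ (precisely by reaching $u=-\tfrac{1}{2\sigma_0^2}$), so $u_0\mapsto u^{u_0}(t)$ need not be defined on all of $[0,\overline{u}]$ and the intermediate value theorem cannot be applied naively; you must exhibit, \emph{inside} the domain $\{u_0\ge 0:t<t_0(u_0)\}$, some $\underline{u}$ with $u^{\underline{u}}(t)\le u$. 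The paper does this by setting $u_*=\inf\{u_0\ge0:t<t_0(u_0)\}$, deducing $t_0(u_*)\le t$ from continuous dependence on initial data, using the blow-up characterization to find $t'<t$ with $u^{u_*}(t')<0$, and transferring this to some $\underline{u}>u_*$ so that $u^{\underline{u}}(t)<0\le u$. A second, smaller gloss: your appeal to \enquote{the standard consistency computation} hides the step the paper actually labors over, the identity $\partial_{u_0}\widetilde{\Gamma}^{u_0}(s)=\widetilde{\Gamma}_u^{u_0}(s)\,\partial_{u_0}u^{u_0}(s)$ (Eq.\ \eqref{36}): since the flow is only $C^1$ in $(s,u_0)$, the interchange of $\partial_s$ and $\partial_{u_0}$ needed there must be justified through the linear variational equation rather than Schwarz's theorem. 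You have already written down that variational system, so both repairs are within reach of your setup, but as stated these two steps are gaps rather than routine omissions.
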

    
        The proof is divided into several steps:

        \paragraph{Existence and uniqueness of local solution.}
        To determine the characteristic curves lying in the plane $(t,u)\in [0,T]\times (-\frac{1}{2\sigma_0^2},+\infty)$, we are only interested in the $s\ge 0$ part of the solution to the Hamilton's ODE. Observe that every solution to Eq.\eqref{31} must satisfy $\widetilde{\Gamma}_u(s)\le 0$ for all $s\ge 0$. Therefore, arbitrarily extending the definition of ${k}^*$ on $(-\infty,0)$ actually makes no difference. By the fact that $(\widetilde{k}^*)'\in C^1(\mathbb{R})$, the right-hand side of Eq.\eqref{31} is locally Lipschitz continuous in $(\widetilde{\Gamma}_u,u)\in (-\infty,+\infty)\times (-\frac{1}{2\sigma_0^2},+\infty)$. Therefore, the existence and uniqueness of local solution, as well as the well-definedness of the maximal interval of existence, follow from a simple application of \citet[Theorem 1.3.1]{Hale1980}, which is known as the Picard-Lindel\"of theorem. 

        \paragraph{Properties of $\widetilde{\Gamma}_u^{u_0}$ and $u^{u_0}$.} Fix $u_0\ge0$ and consider$$f(s)\triangleq\widetilde{k}^*(\widetilde{\Gamma}^{u_0}_u(s))+\frac{\sigma_0^2}{2\sigma^2}\left(\frac{1}{\sigma_0^2u(s)+1}-\frac{1}{\sigma_0^2u_0+1}\right).$$
        Direct calculation shows that $f(0)=f'(s)=0,\;\forall s\in (0,t_0)$. Therefore, we have 
        $$\frac{1}{\sigma_0^2u(s)+1}=-\frac{2\sigma^2}{\sigma_0^2}\widetilde{k}^*(\widetilde{\Gamma}_u^{u_0}(s))+\frac{1}{\sigma_0^2u_0+1}.$$ 
        Plugging this expression back into the first equation of the system \eqref{31}, we obtain an equation that $\widetilde{\Gamma}_u^{u_0}$ should satisfy
        \begin{equation}\label{32}
    \widetilde{\Gamma}_u'(s)=-2\sigma^2\left[\widetilde{k}^*(\widetilde{\Gamma}_u(s))-\frac{\sigma_0^2}{2\sigma^2}\frac{1}{\sigma_0^2u_0+1}\right]^2,\qquad s\in [0,t_0).
    \end{equation}
    Notice that the right-hand side of Eq.\eqref{32} is also locally Lipschitz  w.r.t. $\widetilde{\Gamma}_u\in (-\infty,+\infty)$, thus it admits a unique local solution. This local solution, restricted to $[0,t_0)$, must be equal to $\widetilde{\Gamma}_u^{u_0}$. Clearly, $u^{u_0}$ is determined by \begin{equation}\label{33}
        u^{u_0}(t)=u_0-\int_0^t (\widetilde{k}^*)'(\widetilde{\Gamma}_u^{u_0}(s))\d s,\qquad t\in [0,t_0).
    \end{equation}
    The continuation result to solutions of ODEs gives $$(\widetilde{\Gamma}_u(s),u(s))\rightarrow \partial((-\infty,+\infty)\times (-\frac{1}{2\sigma_0^2},+\infty)),\qquad s\rightarrow t_0.$$ 
    Both $\widetilde{\Gamma}_u^{u_0}(s)$ and $u^{u_0}(s)$ are decreasing in $s\in[0,t_0)$. Therefore, we have either $\widetilde{\Gamma}_u^{u_0}(s)\rightarrow -\infty$ or $u^{u_0}(s)\rightarrow -\frac{1}{2\sigma_0^2}$ when $s\rightarrow t_0$. The former is easily ruled out from the structure of \eqref{31}, and we conclude that $u^{u_0}(s)\xrightarrow{s\rightarrow t_0} -\frac{1}{2\sigma_0^2}$.
    
    \paragraph{Uniqueness of $u_0$.} We shall argue by contradiction. Suppose that two characteristic curves starting from $0\le u_1<u_2$ cross the same point $(t,u)\in [0,T]\times[0,+\infty)$, then clearly $t>0$. From  Eq.\eqref{32} we have, for $i=1,2$: $$(\widetilde{\Gamma}^{u_i}_u)'(s)=-2\sigma^2\left[\widetilde{k}^*(\widetilde{\Gamma}^{u_i}_u(s))-\frac{\sigma_0^2}{2\sigma^2}\frac{1}{\sigma_0^2u_i+1}\right]^2,\qquad \widetilde{\Gamma}^{u_i}_u(0)=0.$$ By the first comparison principle of ordinary differential equations, we have $\widetilde{\Gamma}_u^{u_1}(s)<\widetilde{\Gamma}_u^{u_2}(s)$ holds for every $s\in (0,t)$, which contradicts $$u_1-\int_0^t (\widetilde{k}^*)'(\widetilde{\Gamma}^{u_1}_u(s))\d s=u_2-\int_0^t (\widetilde{k}^*)'(\widetilde{\Gamma}^{u_2}_u(s))\d s,$$
    a consequence of \eqref{33}.

    \paragraph{Existence of $u_0$.} Consider an arbitrary but fixed point $(t,u)\in [0,T]\times [0,+\infty)$,  define $$M_1=\underset{x\in (-1,0)}{\sup}|t(\widetilde{k}^*)'(x)|=t(\widetilde{k}^*)'(-1),\qquad M_2=\inf\left\{x>0:\frac{\sigma_0^4}{2\sigma^2}\frac{1}{(\sigma_0^2 x+1)^2}<\frac{1}{t}\right\}.$$
    Then from the system \eqref{31}, it is easy to see that for any $\overline{u}>u+M_1+M_2$, we have $\widetilde{\Gamma}_u^{\overline{u}}(t)\ge -1$ and $u^{\overline{u}}(t)\ge u$. Consider 
    $$u_*=\inf\{u_0\ge0:t<t_0(u_0)\}.$$ 
    By the continuity of the solution with respect to the initial data, we have $t_0(u_*)\le t$.
    %(The defined region on $(t,u)$ plane is open. If the c.r.curve starting from $u_*$ is defined for $t+\delta$, the same must also holds for $u_*-\epsilon$.)
    Therefore, there exists $t'<t$ such that $u^{u_*}(t')<0$, by the property of $u^{u_0}$ that we have just proved. Using the continuity of the solution with respect to the initial data again, we conclude that $u^{\underline{u}}(t')<0$ holds for some $\underline{u}>u_*$, which implies $u^{\underline{u}}(t)<0\le u$. 
    %Observe that both $u^{u_0}(t)$ and $\widetilde{\Gamma}_u^{u_0}(t)$ are increasing in $u_0$. Therefore, for each $u'\in [\underline{u},\overline{u}]$, we have $t_0(u')>t$. 
    Finally, an application of the intermediate value theorem shows the existence of $u_0\in [\underline{u},\overline{u}]$ such that $u^{u_0}(t)=u$, as desired.

    \paragraph{Continuous differentiability of $u_0$ as a function of $(t,u)$.} The classical differentiability result of solution with respect to initial data shows that $\widetilde{\Gamma}_u^{u_0}(s)$ and $u^{u_0}(s)$ are $C^1$ in both $s$ and $u_0$. Furthermore, as $(\widetilde{k}^*)''\le0$ and $\frac{\partial \widetilde{\Gamma}_u^{u_0}(s)}{\partial u_0}\ge 0$, we have $$\frac{\partial u^{u_0}(t)}{\partial u_0}=1-\int_0^t(\widetilde{k}^*)''(\widetilde{\Gamma}_u^{u_0}(s))\frac{\partial \widetilde{\Gamma}_u^{u_0}(s)}{\partial u_0}\d s\ge 1>0.$$ 
    Consequently, the Jacobian matrix $$\frac{\partial (t,u^{u_0}(t))}{\partial (t,u_0)}=\begin{pmatrix} 1 & 0\\
    (u^{u_0})'(t) & \frac{\partial u^{u_0}(t)}{\partial u_0}\end{pmatrix}$$is invertible. As the mapping from $\{(t,u_0):u^{u_0}(t)\ge 0\}$ to $(t,u^{u_0}(t))\in [0,T]\times [0,+\infty)$ is bijective, we may apply the inverse mapping theorem and the result follows.
    %(Notice $u^{u_0}(t)=u$)

    \paragraph{$\widetilde{\Gamma}^{u_0(t,u)}(t)$ solves $\eqref{time_rev}$.}  From now on, we will regard $\widetilde{\Gamma}^{u_0}(s),\widetilde{\Gamma}_t^{u_0}(s)$ and $u^{u_0}(s)$ as $C^1$ functions of $(s,u_0)$. Fix a single characteristic curve starting from $u_0\ge0$, consider $$g(s)=\widetilde{\Gamma}_t^{u_0}(s)-\frac{\sigma_0^2}{2\sigma^2}\frac{1}{\sigma_0^2u^{u_0}(s)+1}-\widetilde{k}^*(\widetilde{\Gamma}_u^{u_0}(s)).$$ 
    Then we have $g(0)=0$ and $$g'(s)=(\widetilde{\Gamma}_t^{u_0})'(s)+\frac{\sigma_0^2}{2\sigma^2}\frac{\sigma_0^2(u^{u_0})'(s)}{(\sigma_0^2u^{u_0}(s)+1)^2}-(\widetilde{k}^*)'(\widetilde{\Gamma}_u^{u_0}(s))(\widetilde{\Gamma}_u^{u_0})'(s)=0.$$ 
    Therefore, on each characteristic curve we have $g(s)\equiv 0$ for $s\in [0,t_0)$. The proof is complete once we have verified that $\frac{\partial}{\partial t}\widetilde{\Gamma}(t,u)=\widetilde{\Gamma}_t^{u_0(t,u)}(t)$ and $\frac{\partial}{\partial u}\widetilde{\Gamma}(t,u)=\widetilde{\Gamma}_u^{u_0(t,u)}(t)$,
    %(compare $g(s)=0$ and the original HJ equation.)
    and the continuous differentiability of $\widetilde{\Gamma}_t^{u_0}(s)$ and $\widetilde{\Gamma}_u^{u_0}(s)$ implies that $\widetilde{\Gamma}(t,u)\triangleq\widetilde{\Gamma}^{u_0(t,u)}(t)\in C^2((0,T]\times[0,+\infty))\cap C([0,T]\times[0,+\infty))$.
    
    In fact, from the characteristic system we have \begin{align}\label{35}
        \nonumber\frac{\partial}{\partial s}\widetilde{\Gamma}^{u_0}(s)&=-\widetilde{\Gamma}_u^{u_0}(s)(\widetilde{k}^*)'(\widetilde{\Gamma}_u^{u_0}(s))+\frac{\sigma_0^2}{2\sigma^2}\frac{1}{\sigma_0^2u^{u_0}(s)+1}+\widetilde{k}^*(\widetilde{\Gamma}_u^{u_0}(s))\\
        \nonumber&=-\widetilde{\Gamma}_u^{u_0}(s)(\widetilde{k}^*)'(\widetilde{\Gamma}_u^{u_0}(s))+\frac{\sigma_0^2}{2\sigma^2}\frac{1}{\sigma_0^2u_0+1}\\
        &=\widetilde{\Gamma}_u^{u_0}(s)\frac{\partial}{\partial s}u^{u_0}(s)+\widetilde{\Gamma}_t^{u_0}(s).
    \end{align}
    %(Use $f(s)=0$ for the second equality.)
    We show next that \begin{equation}\label{36}
        \frac{\partial}{\partial u_0}\widetilde{\Gamma}^{u_0}(s)=\widetilde{\Gamma}_u^{u_0}(s)\frac{\partial}{\partial u_0}u^{u_0}(s).
    \end{equation} For this we define $r(s)=\frac{\partial}{\partial u_0}\widetilde{\Gamma}^{u_0}(s)-\widetilde{\Gamma}_u^{u_0}(s)\frac{\partial}{\partial u_0}u^{u_0}(s),\;s\in [0,t_0(u_0))$, where $u_0\ge 0$ is fixed. It is clear that $r(0)=0$. To calculate its derivative, note that from the classical theory of ODEs, we have the following matrix linear variational equation:
    $$\frac{\partial}{\partial s}\begin{pmatrix}
        \frac{\partial}{\partial u_0}\widetilde{\Gamma}_u^{u_0}(s)\\
        \frac{\partial}{\partial u_0}u^{u_0}(s)
    \end{pmatrix}=\begin{pmatrix}
        0 &\frac{\sigma_0^6}{\sigma^2}\frac{1}{(\sigma^2u^{u_0}(s)+1)^3}\\
        -(\widetilde{k}^*)''(\widetilde{\Gamma}_u^{u_0}(s)) & 0
    \end{pmatrix}\begin{pmatrix}
        \frac{\partial}{\partial u_0}\widetilde{\Gamma}_u^{u_0}(s)\\
        \frac{\partial}{\partial u_0}u^{u_0}(s)
    \end{pmatrix}.$$
    This shows that we can  interchange the order of derivatives$$\frac{\partial}{\partial s}(\frac{\partial}{\partial u_0}\widetilde{\Gamma}_u^{u_0}(s))=\frac{\sigma_0^6}{\sigma^2}\frac{1}{(\sigma^2u^{u_0}(s)+1)^3}\frac{\partial}{\partial u_0}u^{u_0}(s)=\frac{\partial}{\partial u_0}(\frac{\partial}{\partial s}\widetilde{\Gamma}_u^{u_0}(s))$$
    even if $\widetilde{\Gamma}_u^{u_0}(s)$ is merely a $C^1$ function of $(s,u_0)$. The same property holds for $u^{u_0}(s)$ and $\widetilde{\Gamma}^{u_0}(s)$, and we finally obtain
    \begin{align*}
        r'(s)&=\frac{\partial^2}{\partial s\partial u_0}\widetilde{\Gamma}^{u_0}(s)-\widetilde{\Gamma}_u^{u_0}(s)\frac{\partial^2}{\partial s\partial u_0}u^{u_0}(s)-\frac{\partial}{\partial s}\widetilde{\Gamma}_u^{u_0}(s)\frac{\partial}{\partial u_0}u^{u_0}(s)\\
        &=\frac{\partial}{\partial u_0}\widetilde{\Gamma}_u^{u_0}(s)\frac{\partial}{\partial s}u^{u_0}(s)+\frac{\partial}{\partial u_0}\widetilde{\Gamma}_t^{u_0}(s)-\frac{\partial}{\partial s}\widetilde{\Gamma}_u^{u_0}(s)\frac{\partial}{\partial u_0}u^{u_0}(s)\\
        &=-\frac{\partial}{\partial u_0}\widetilde{\Gamma}_u^{u_0}(s)(\widetilde{k}^*)'(\widetilde{\Gamma}_u^{u_0}(s))+\frac{\partial}{\partial u_0}\widetilde{\Gamma}_t^{u_0}(s)+\frac{\sigma_0^4}{2\sigma^2}\frac{1}{(\sigma_0^2 u^{u_0}(s)+1)^2}\frac{\partial}{\partial u_0}u^{u_0}(s)\\
        &=0.
    \end{align*}
    Note that the second equality is an application of \eqref{35} and the last equality is obtained by taking derivative w.r.t. $u_0$ in the equation
    $$\widetilde{\Gamma}_t^{u_0}(s)-\frac{\sigma_0^2}{2\sigma^2}\frac{1}{\sigma_0^2u^{u_0}(s)+1}-\widetilde{k}^*(\widetilde{\Gamma}_u^{u_0}(s))=0,$$and \eqref{36} is now proved. With the help of \eqref{35}-\eqref{36}, we immediately obtain \begin{align*}
        \frac{\partial}{\partial t}\widetilde{\Gamma}(t,u)&=\frac{\d}{\d t}\widetilde{\Gamma}^{u_0(t,u)}(t)\\
        &=\frac{\partial}{\partial s}\widetilde{\Gamma}^{u_0(t,u)}(t)+\frac{\partial}{\partial u_0}\widetilde{\Gamma}^{u_0(t,u)}(t)\frac{\partial}{\partial t}u_0(t,u)\\
        &=\widetilde{\Gamma}_u^{u_0(t,u)}(t)\frac{\partial}{\partial s}u^{u_0(t,u)}(t)+\widetilde{\Gamma}_t^{u_0(t,u)}(t)+\widetilde{\Gamma}_u^{u_0(t,u)}(t)\frac{\partial}{\partial u_0}u^{u_0(t,u)}(t)\frac{\partial}{\partial t}u_0(t,u)\\
        &=\widetilde{\Gamma}_t^{u_0(t,u)}(t)+\widetilde{\Gamma}_u^{u_0(t,u)}(t)\frac{\d}{\d t}u^{u_0(t,u)}(t)\\
        &=\widetilde{\Gamma}_t^{u_0(t,u)}(t).
    \end{align*}
    In the same manner, $\frac{\partial}{\partial u}\widetilde{\Gamma}(t,u)=\widetilde{\Gamma}_u^{u_0(t,u)}(t)$ can also be verified. The uniqueness of $C^2$-classical solution follows from the procedure in deriving the characteristic system; see, e.g.,  \citet[Section 3.2.1]{Evans2010} for a rigorous argument.

    \section{Proof of Theorem \ref{verification_2}}\label{verif_proof}
    Clearly, $(k^*)'=(k')^{-1}\in C^1[0,+\infty)$ is locally Lipschitz continuous. Using the fact that $\Gamma_u\in C^1([0,T)\times(-\frac{1}{2\sigma_0^2},+\infty))\cap C([0,T]\times(-\frac{1}{2\sigma_0^2},+\infty))$, it is easy to verify the local Lipschitz continuity of the right-hand side of Eq.\eqref{41}. Thus the existence and uniqueness of local solution follows again from Picard-Lindel\"of theorem. Observe from the system \eqref{31} that for every $(t,u)\in [0,T]\times[0,+\infty)$: \begin{equation}\label{partial_derivative_estimation}
            0< \frac{\sigma_0^4}{2\sigma^2\gamma}\frac{T-t}{(\sigma_0^2u_0+1)^2}\le -\frac{1}{\gamma}\Gamma_u(t,u)\le \frac{\sigma_0^4}{2\sigma^2\gamma}\frac{T-t}{(\sigma_0^2u+1)^2}\le \frac{\sigma_0^4(T-t)}{2\sigma^2\gamma}.
        \end{equation}
        We conclude that $0\le Z_t\le (\frac{1}{\sigma^2}+(k')^{-1}(\frac{\sigma_0^4T}{2\sigma^2\gamma}))T$  for all $t\in [0,T]$. Therefore, the local solution is well defined on the whole time interval $[0,T]$.
        
         Recall that the optimizers to HJB equation \eqref{HJB} have the following feedback form: \begin{align*}{\pi}(t,x,y,z)&=-\frac{V_{xy}(t,x,y,z)}{\sigma^2V_{xx}(t,x,y,z)}=\frac{1}{\sigma^2\gamma}\frac{\mu_0+\sigma_0^2y}{\sigma_0^2z+\frac{\sigma_0^2}{\sigma^2}(T-t)+1},\\
        ({\theta}^2)(t,x,y,z)&=\underset{\theta>0}{\text{argmax}}\;\left\{\theta(V_z(t,x,y,z)+\frac{1}{2}V_{yy}(t,x,y,z))-k(\theta)V_x(t,x,y,z)\right\}\\
        &= (k')^{-1}\left(\frac{V_z(t,x,y,z)+\frac{1}{2}V_{yy}(t,x,y,z)}{V_x(t,x,y,z)}\right)\\
        &=(k')^{-1}\left(-\frac{1}{\gamma}\Gamma_u(t,z+\frac{1}{\sigma^2}(T-t))\right).
    \end{align*}
    From these expressions we can easily obtain Eqs \eqref{42}--\eqref{43}, and the verification result is proved by using the same way as in Theorem \ref{verification_theorem}.

    \section{Proof of Theorem \ref{limit}}\label{limit_proof}
     Let $(\vartheta^{(b)},\varpi^{(b)})$ be the optimal information-and-trading strategy for cost function $k^{(b)}$ and define $$u^{(b)}_t=\frac{T}{\sigma^2}+\int_0^t(\vartheta^{(b)}_s)^2 \d s.$$ 
     From Remark \ref{det_opti} and Section \ref{eco_int}, we see that $\vartheta^{(b)}$ solves the deterministic optimal control problem \begin{align}\label{inner_problem2}
        \nonumber \mini_{\vartheta\;\text{bounded}}&\;\left\{ \int_0^T\gamma k^{(b)}(\vartheta_t^2)\d t+\int_0^T\frac{\sigma_0^2}{2\sigma^2}\frac{1}{\sigma_0^2u_t+1}\d t-\frac{\mu_0^2}{2\sigma_0^2}\right\}\\
        &\text{subject to }
         \quad\d u_t=\vartheta_t^2\d t,\;u_0=\frac{T}{\sigma^2},
        \end{align}
    and $u^{(b)}$ is the corresponding state process. The above optimization problem is strictly convex and thus admits at most one optimal solution. Therefore, $\vartheta^{(b)}$ is actually the unique optimizer. Based on the classical variational theory (cf. \citet[Section 1.10]{benton1977}),  we know that $\vartheta^{(b)}$ satisfies \begin{equation}\label{Euler}
        -\gamma (k^{(b)})''((\vartheta^{(b)}_t)^2)\frac{\d (\vartheta^{(b)}_t)^2}{\d t}=\frac{\sigma_0^4}{2\sigma^2}\frac{1}{(\sigma_0^2 u^{(b)}_t+1)^2},
    \end{equation}
    which is the Euler-Lagrange equation related to the optimization problem \eqref{inner_problem2}.
    
    Consider $b\in (0,1]$ only. Let $\Gamma^{(b)}$ be the solution to Eq.\eqref{29} and $Z^{(b)}$ be the solution to Eq.\eqref{41} with $k$ replaced by $k^{(b)}$. Define $u_0^{(b)}(t,u)$ similarly ($u_0$ is introduced in the Appendix \ref{GC_proof}). From the Hamilton's ODE \eqref{31} we immediately see that for each fixed $u\in [0,+\infty)$, there exists $\overline{u}(u)>0$ such that $u_0^{(b)}(t,u)<\overline{u}(u)$ holds for every $(t,b)\in [0,T]\times(0,1]$.
    %($((k^{(b)})')^{-1}$ is decresing in $b$, bounded by $(k')^{-1}$.)
    Therefore, using Eq.\eqref{42} and the derivative estimate \eqref{partial_derivative_estimation}, we have \begin{align*}
            (\vartheta_t^{(b)})^2&=\left((k^{(b)})'\right)^{-1}\left(-\frac{1}{\gamma}\Gamma_u^{(b)}\left(t,Z_t^{(b)}+\frac{1}{\sigma^2}(T-t)\right)\right)\\
            &\in \left[\left((k^{(1)})'\right)^{-1}\left(\frac{\sigma_0^4}{2\sigma^2\gamma}\frac{T-t}{\left(\sigma_0^2\overline{u}\left(\frac{2T}{\sigma^2}+T(k')^{-1}(\frac{\sigma_0^4T}{2\sigma^2\gamma})\right)+1\right)^2}\right),\left(k'\right)^{-1}\left(\frac{\sigma_0^4(T-t)}{2\sigma^2\gamma}\right)\right]
        \end{align*}
        %(also use the upper bound of $Z_t$ derived in the previous appendix.)
        for every $b\in (0,1]$. By  Eq.\eqref{Euler}, we see that, on each closed sub-interval $[0,\tau]\subset[0,T)$, both $ (\vartheta^{(b)}_t)^2$ and $\frac{\d (\vartheta^{(b)}_t)^2}{\d t}$ are bounded, uniformly in $b\in (0,1]$. Therefore,  taking $\tau=T-\frac{1}{n}$ and repeatedly applying the Ascoli-Arzela's lemma, we  obtain a decreasing sequence $\{b_n\}_{n\ge 1}$ with $b_n\rightarrow 0$ such that $\vartheta_t^{(b_n)}$ converge to some continuous function $\vartheta_t^{(0)}$, uniformly on compact subsets of $[0,T)$. 

        Define $\varpi^{(0)}$ as in the statement of the theorem. We show that $(\varpi^{(0)},\vartheta^{(0)})$ is actually an optimal realized strategy. Notice that the admissibility of information-and-trading strategy is independent of the cost function $k$. Consider any other realized strategy $(\vartheta,\varpi)$ and let $X,X^{(0)}$ be the corresponding wealth process generated by $(\vartheta,\varpi)$ and $(\varpi^{(0)},\vartheta^{(0)})$, respectively. If $E[U(X_T)]=-\infty$, the inequality $E[U(X_T)]\le E[U(X^{(0)}_T)]$ holds trivially. Otherwise, we have \begin{align*}
            \E&[U(X_T)]=\E\left[-\frac{1}{\gamma}\exp\left\{-\gamma\left(x+\int_0^T\varpi_t\mu\d t+\int_0^T\varpi_t\sigma\d W_t-\int_0^Tk(\vartheta_t^2)\d t\right)\right\}\right]\\
            &=\underset{n\rightarrow \infty}{\lim}\E\left[-\frac{1}{\gamma}\exp\left\{-\gamma\left(x+\int_0^T\varpi_t\mu\d t+\int_0^T\varpi_t\sigma\d W_t-\int_0^Tk^{(b_n)}(\vartheta_t^2)\d t\right)\right\}\right]\\
            &\le \underset{n\rightarrow \infty}{\limsup}\E\left[-\frac{1}{\gamma}\exp\left\{-\gamma\left(x+\int_0^T\varpi^{(b_n)}_t\mu\d t+\int_0^T\varpi^{(b_n)}_t\sigma\d W_t-\int_0^Tk^{(b_n)}((\vartheta^{(b_n)}_t)^2)\d t\right)\right\}\right]\\
            &= \underset{n\rightarrow \infty}{\limsup}\E\left[-\frac{1}{\gamma}\exp\left\{-\gamma\left(x+\int_0^T\varpi^{(b_n)}_t\mu\d t+\int_0^T\varpi^{(b_n)}_t\sigma\d W_t-\int_0^Tk((\vartheta^{(0)}_t)^2)\d t\right)\right\}\right]\\
            &\le \E\left[-\frac{1}{\gamma}\exp\left\{-\gamma\left(x+\int_0^T\varpi^{(0)}_t\mu\d t+\int_0^T\varpi^{(0)}_t\sigma\d W_t-\int_0^Tk((\vartheta^{(0)}_t)^2)\d t\right)\right\}\right]\\
            &=\E[U(X^{(0)}_T)],
            \end{align*}
        where in the second equality we used the dominated convergence theorem, in the first inequality we used the optimality of the realized strategy $(\vartheta^{(b_n)},\varpi^{(b_n)})$, in the third equality we used the fact that $\vartheta^{(b_n)}$ is deterministic, and in the second inequality we used the fact that $\varpi^{(0)}$ is the optimal response given information acquisition strategy $\vartheta^{(0)}$.

        Now we know that $(\varpi^{(0)},\vartheta^{(0)})$ is an optimal realized strategy. Therefore, according to our analysis in Section \ref{eco_int}, $\vartheta^{(0)}$ must be the optimal solution to Problem \eqref{inner_problem}. If the limiting function $\vartheta^{(0)}_t\triangleq \underset{b\rightarrow 0}{\lim}\vartheta^{(b)}_t$ is not well-defined, then there exists another sequence $\{\widetilde{b}_n\}_{n\ge 1}\rightarrow 0$ such that $\vartheta^{\widetilde{b}_n}$ converge to some continuous function $\widetilde{\vartheta}$ different from $\vartheta^{(0)}$. Repeat the above argument and we find that $\widetilde{\vartheta}$ also solves Problem  \eqref{inner_problem}, which contradicts the uniqueness of its optimal solution. 

    \section{Proof of Theorem \ref{property}}\label{property_proof}
    Property (a) follows directly from the derivative estimate \eqref{partial_derivative_estimation}. Property (b) is obtained by noticing that $\vartheta^*$ is actually the optimal solution to problem \eqref{inner_problem2}. If $\vartheta^*$ is not decreasing in $t$, then by its continuity we may find two closed intervals $A_1=[t_1,t_1+t_0]$ and $A_2=[t_2,t_2+t_0]$ such that $\inf\{\vartheta^*_t:t\in A_2\}> \sup\{\vartheta^*_t:t\in A_1\}$ and $t_1<t_2$. Switching the definition of $\vartheta^*$ on the two intervals then contradicts its optimality. To see (c), note that the Hamilton's ODE system \eqref{31} does not depend on the choice of $\mu_0$. This indicates that the characteristic curves, as well as the spatial derivative $\Gamma_u$, are unaffected by changes in $\mu_0$. The optimal information acquisition strategy, however, depends only on the spatial derivative $\Gamma_u$.

\end{appendices}

\bibliographystyle{abbrvnat}
\bibliography{0817}
\end{document}